\newtheorem{lemma}{Lemma}[section]
\newtheorem{thm}{Theorem}[section]
\def\Var{\textsf{Var}} 
\def\h{\textsf{H}}
\def\text#1{\mbox{\rm #1}}
\def\sgn{\text{sign}}
\def\h{\textsf{H}}
\def\k{\textsf{K}}
\def\F{\textsf{F}}
\def\n{{ \mathcal{N} }}
\def\S{\mathfrak{S}}
\DeclarePairedDelimiter{\ceil}{\lceil}{\rceil}
\newcommand{\argmin}{\mathop{\rm argmin}}
\newcommand{\indc}[1]{{\mathbb{I}\left\{{#1}\right\}}}
\newcommand{\norm}[1]{\left\|{#1} \right\|}
\newcommand{\wh}{\widehat}
\newcommand{\wt}{\widetilde}
\newcommand{\opnorm}[1]{\|#1\|_{\rm op}}
\newtheorem*{condb'}{Condition B'}
\newcommand{\br}[1]{\left( #1 \right)}
\newcommand{\sbr}[1]{\left[ #1 \right]}
\newcommand{\cbr}[1]{\left\{ #1 \right\}}
\newcommand{\pbr}[1]{\p\left( #1 \right)}
\newcommand{\mathr}{\mathbb{R}}
\newcommand{\mathn}{\mathcal{N}}
\newcommand{\abs}[1]{\left| #1 \right|}
\newcommand{\p}{\mathbb{P}}
\newcommand{\E}{\mathbb{E}}
\title{Optimal Full Ranking from Pairwise Comparisons
}
\author[1]{Pinhan Chen}
\author[1]{Chao Gao}
\author[2]{Anderson Y. Zhang}
\affil[1]{
University of Chicago
}
\affil[2]{
University of Pennsylvania
}
\begin{document}
\maketitle

\begin{abstract}

We consider the problem of ranking $n$ players from partial pairwise comparison data under the Bradley-Terry-Luce model. For the first time in the literature, the minimax rate of this ranking problem is derived with respect to the Kendall's tau distance that measures the difference between two rank vectors by counting the number of inversions. The minimax rate of ranking exhibits a transition between an exponential rate and a polynomial rate depending on the magnitude of the signal-to-noise ratio of the problem. To the best of our knowledge, this phenomenon is unique to full ranking and has not been seen in any other statistical estimation problem. To achieve the minimax rate, we propose a divide-and-conquer ranking algorithm that first divides the $n$ players into groups of similar skills and then computes local MLE within each group. The optimality of the proposed algorithm is established by a careful approximate independence argument between the two steps.

\smallskip

\end{abstract}



\section{Introduction}\label{sec:intro}

Given partially observed pairwise comparison data from $n$ players, we are interested in ranking the players according to their skills by aggregating the comparison results. This high-dimensional statistical estimation problem has important applications in many areas such as recommendation systems \cite{baltrunas2010group, cao2018attentive}, sports and gaming \cite{beaudoin2018computationally, csato2013ranking, minka2018trueskill, herbrich2007trueskill, motegi2012network,sha2016chalkboarding}, web search \citep{dwork2001rank,cossock2006subset}, social choices \cite{mcfadden1973conditional, mcfadden2000mixed, saaty1990decision, louviere2000stated, manski1977structure}, psychology \cite{choo2004common, thurstone1927law, luce1977choice},  information retrieval \cite{liu2011learning,cao2007learning},  etc. In this paper, we focus on arguably one of the most widely used parametric models, the Bradley-Terry-Luce (BTL) model \citep{bradley1952rank,luce2012individual}. That is, we observe $L$ games played between $i$ and $j$, and the outcome is modeled by
\begin{equation}
y_{ijl}\stackrel{ind}{\sim}\text{Bernoulli}\left(\frac{w_i^*}{w_i^*+w_j^*}\right),\quad l=1,\cdots, L. \label{eq:BTL-w}
\end{equation}
We only observe outcomes from a small subset of pairs. This subset $E$ is modeled by edges generated by an Erd\H{o}s-R\'{e}nyi random graph \citep{erdHos1960evolution} with connection probability $p$ on the $n$ players. More details of the model will be given in Section \ref{sec:ml}. With the observations $\{y_{ijl}\}_{(i,j)\in E,l\in[L]}$, our goal is to optimally recover the ranks of the skill parameters $w_i^*$'s.

The literature on ranking under the BTL model has been mainly focused on the so-called top-$k$ ranking problem. Let $r^*$ be the rank vector of the $n$ players. In other words, $r^*$ is a permutation such that $r_i^*=j$ if $w_i^*$ is the $j$th largest number among $\{w_i^*\}_{i\in[n]}$. The goal of top-$k$ ranking is to recover the set $\{i\in[n]: r_i^*\leq k\}$ from the pairwise comparison data. Theoretical properties of the top-$k$ ranking problem have been studied by \cite{chen2015spectral,jang2016top,chen2017competitive,jang2017optimal,chen2019spectral} and references therein. Recently, it was shown by \cite{chen2019spectral} that both the MLE and the spectral ranking algorithm proposed by \cite{negahban2017rank} can exactly recover the set of top-$k$ players with high probability under optimal sample complexity up to some constant factor. In terms of partial recovery, the minimax rate of the problem under a normalized Hamming distance was derived by \cite{chen2020partial}.

In this paper, we study the problem of \textit{full ranking}, the estimation of the entire rank vector $r^*$. To the best of our knowledge, theoretical analysis of full ranking under the BTL model has not been considered in the literature yet. We rigorously formulate the full ranking problem from a decision-theoretic perspective, and derive the minimax rate with respect to a loss function that measures the difference between two permutation vectors. To be specific, our main result of the paper shows that
\begin{equation}
\inf_{\wh{r}\in\S_n}\sup_{r^*\in\S_n}\mathbb{E}\k(\wh{r},r^*)\asymp \begin{cases}
\exp\left(-\Theta(Lp\beta)\right), & Lp\beta > 1, \\
n\wedge\sqrt{\frac{1}{Lp\beta}}, & Lp\beta \leq 1,\\
\end{cases}\label{eq:minimax-BTL-intro}
\end{equation}
where $\S_n$ is the set of all rank vectors of size $n$, $\k(\wh{r},r^*)$ is the \textit{Kendall's tau distance} that counts the number of inversions between two ranks, and $\beta$ is the minimal gap between skill parameters of different players. The precise definitions of these quantities will be given in Section \ref{sec:ml}. The minimax rate (\ref{eq:minimax-BTL-intro}) exhibits a transition between an exponential rate and a polynomial rate. This is a unique phenomenon in the estimation of a full rank vector. In contrast, under the same BTL model, the minimax rate of estimating the skill parameters is always polynomial \citep{negahban2017rank,chen2019spectral}, and the minimax rate of top-$k$ ranking is always exponential \citep{chen2020partial}. Whether (\ref{eq:minimax-BTL-intro}) is exponential or polynomial depends on the value of $Lp\beta$ that plays the role of signal-to-noise ratio. When $Lp\beta > 1$, the exponential minimax rate is a consequence of the discreteness of a rank vector. On the other hand, when $Lp\beta \leq 1$, the discrete nature of ranking is blurred by the noise, and thus estimating the rank vector is effectively estimating a continuous parameter, which leads to a polynomial rate. A more detailed statement of the minimax rate (\ref{eq:minimax-BTL-intro}) with an explicit exponent in the regime of exponential rate will be given in Section \ref{sec:main}.

Achieving the minimax rate (\ref{eq:minimax-BTL-intro}) is a nontrivial problem. To this end, we propose a \textit{divide-and-conquer} algorithm that first partitions the $n$ players into several leagues and then computes a local MLE using games in each league. Finally, a full rank vector is obtained by aggregating local ranking results from all leagues. The divide-and-conquer technique is the basis of efficient algorithms for all kinds of sorting problems \citep{sedgewick1978implementing,katajainen1997meticulous,knuth1997art}. Our adaption of this classical technique in the optimal full ranking is motivated by both information-theoretic and computational considerations. From an information-theoretic perspective, games between players whose skill parameters are significantly different from each other have little effect on the final ranking result. This phenomenon can be revealed by a simple local Fisher information calculation of each player. The league partition step groups players with similar skill parameters together, thus maximizing information in the follow-up step of local MLE. From a computational perspective, the local MLE computed within each league involves an objective function whose Hessian matrix is well conditioned, a property that is crucial for efficient convex optimization. The description and the analysis of our algorithm are given in Section \ref{sec:dnc}.

Before the end of the introduction section, let us also remark that the more general problem of permutation estimation has also been considered in various other settings in the literature \citep{braverman2008noisy,braverman2009sorting,collier2013permutation,collier2016minimax,pananjady2016linear,gao2017phase,mao2018minimax,gao2019iterative,pananjady2020worst}. For instance, in the problem of noisy sorting \citep{braverman2009sorting,mao2018minimax}, one assumes a data generating process that satisfies $\mathbb{P}(y_{ijl}=1)>\frac{1}{2}+\gamma$ when $r_i^*<r_j^*$. In the feature matching problem \citep{collier2016minimax,collier2013permutation}, it is assumed that $X_i-Y_{r_i^*}\sim\n(0,\sigma^2_i)$ for some permutation $r^*$, and the goal is to match the two data sequences $X$ and $Y$ by recovering the unknown permutation. An extension of this problem, called shuffled regression, assumes that the response variable $y_{i}$ and regression function $x_{r_i^*}^T\beta$ are linked by an unknown permutation. Estimation of the unknown permutation in shuffled regression has been considered by \cite{pananjady2016linear}.

The rest of the paper is organized as follows. We introduce the problem setting in Section \ref{sec:ml}. The minimax rate of the  full ranking is presented in Section \ref{sec:main}. In Section \ref{sec:dnc}, we introduce and analyze a divide-and-conquer algorithm that achieves the minimax rate. Numerical studies of the algorithm are given in Section \ref{sec:simulation}. In Section \ref{sec:disc}, we discuss a few extensions and future projects that are related to the paper. Finally, Section \ref{sec:pf} collects technical proofs of the results of the paper.

We close this section by introducing some notation that will be used in the paper. For an integer $d$, we use $[d]$ to denote the set $\{1,2,...,d\}$. Given two numbers $a,b\in\mathbb{R}$, we use $a\vee b=\max(a,b)$ and $a\wedge b=\min(a,b)$. For any $x\in\mathbb{R}$, $\lfloor x\rfloor$ stands for the largest integer that is no greater than $x$ and $\lceil x\rceil$ is the smallest integer that is no less than $x$.  For two positive sequences $\{a_n\},\{b_n\}$, $a_n\lesssim b_n$ or $a_n=O(b_n)$ means $a_n\leq Cb_n$ for some constant $C>0$ independent of $n$, $a_n=\Omega(b_n)$ means $b_n=O(a_n)$, and we use $a_n\asymp b_n$ or $a_n=\Theta(b_n)$ when both $a_n\lesssim b_n$ and $b_n\lesssim a_n$ hold. We also write $a_n=o(b_n)$ when $\limsup_n\frac{a_n}{b_n}=0$. For a set $S$, we use $\indc{S}$ to denote its indicator function and $|S|$ to denote its cardinality. We use the notation $S =S_1 \uplus S_2$ to denote a partition of $S$ such that $S_1\cap S_2 = \varnothing$ and $S= S_1 \cup S_2$.
For a vector $v\in\mathbb{R}^d$, its norms are defined by $\norm{v}_1=\sum_{i=1}^d|v_i|$, $\norm{v}^2=\sum_{i=1}^dv_i^2$ and $\norm{v}_{\infty}=\max_{1\leq i\leq d}|v_i|$. For a matrix $A\in\mathbb{R}^{n\times m}$, we use $\opnorm{A}$ for its operator norm, which is the largest singular value. The notation $\mathds{1}_{d}$ means a $d$-dimensional column vector of all ones. Given $p,q\in(0,1)$, the Kullback-Leibler divergence is defined by $D(p\|q)=p\log\frac{p}{q}+(1-p)\log\frac{1-p}{1-q}$. For a natural number $n$, $\S_n$ is the set of permutations on $[n]$. The notation $\mathbb{P}$ and $\mathbb{E}$ are used for generic probability and expectation whose distribution is determined from the context.

\section{A Decision-Theoretic Framework of Full Ranking}\label{sec:ml}

\paragraph{The BTL Model.} Consider $n$ players, each associated with a positive latent skill parameter $w_i^*$ for $i\in[n]$. The games played among the $n$ players are modeled by an Erd\H{o}s-R\'{e}nyi random graph $A\sim\mathcal{G}(n,p)$. To be specific, we have $A_{ij}\stackrel{iid}{\sim}\text{Bernoulli}(p)$ for all $1\leq i<j\leq n$. For any pair $(i,j)$ such that $A_{ij}=1$, we observe the outcomes of $L$ games played between $i$ and $j$, modeled by the  Bradley-Terry-Luce (BTL) model (\ref{eq:BTL-w}). Our goal is to estimate the ranks of the $n$ players.

To formulate the problem of full ranking from a decision-theoretic perspective, we can reparametrize the BTL model (\ref{eq:BTL-w}) by a sorted vector $\theta^*$ and a rank vector $r^*$. A sorted vector $\theta^*$ satisfies $\theta_1^*\geq \theta_2^* \geq \cdots \geq \theta_n^*$, and a rank vector $r^*$ is an element of the permutation set $\S_n$. We have
\begin{equation}
y_{ijl}\stackrel{ind}{\sim}\text{Bernoulli}(\psi(\theta^*_{r^*_i}-\theta^*_{r^*_j})),\quad l=1,\cdots, L, \label{eq:BTL-theta}
\end{equation}
where $\psi(\cdot)$ is the sigmoid function $\psi(t)=\frac{1}{1+e^{-t}}$. In the original representation (\ref{eq:BTL-w}), we have $w_i^*=\exp(\theta_{r_i^*}^*)$ for all $i\in[n]$. With (\ref{eq:BTL-theta}), the full ranking problem is to estimate the rank vector $r^*$ from the random comparison data.

\paragraph{Loss Function for Full Ranking.}

To measure the difference between an estimator $\wh{r}\in\S_n$ and the true $r^*\in\S_n$, we introduce the \textit{Kendall's tau distance}, defined by
\begin{equation}
\k(\wh{r},r^*)=\frac{1}{n}\sum_{1\leq i<j\leq n}\indc{\sgn(\wh{r}_i-\wh{r}_j)\sgn(r_i^*-r_j^*)<0}, \label{eq:kendall}
\end{equation}
where $\text{sign}(x)$ represents the sign of $x$ and $n\k(\wh{r},r^*)$ counts the number of inversions between $\wh{r}$ and $r^*$. Another distance is  the normalized $\ell_1$ loss, defined as
\begin{equation}
\F(\wh{r},r^*)=\frac{1}{n}\sum_{i=1}^n\abs{\wh{r}_i-r_i^*}, \label{eq:footrule}
\end{equation}
also known as the \textit{Spearman's footrule}. 
%
The two loss functions can be related by the following inequality,
\begin{equation}
\frac{1}{2}\F(\wh{r},r^*) \leq \k(\wh{r},r^*) \leq \F(\wh{r},r^*). \label{eq:FandK}
\end{equation}
See \cite{diaconis1977spearman} for the derivation of (\ref{eq:FandK}). The inequality (\ref{eq:FandK}) establishes an equivalence between the estimation of the vector $r^*$ and that of the matrix of pairwise relation $\mathbb{I}\{r_i^*<r_j^*\}$, a key fact that we will explore in constructing an optimal algorithm.

A problem that is closely related to full ranking is called top-$k$ ranking. The goal of top-$k$ ranking is to identify the subset $\{i\in[n]:r_i^*\leq k\}$ from the random comparison data. In \cite{chen2020partial}, the minimax rate of top-$k$ ranking was studied under the loss function of normalized Hamming distance,
\begin{equation}
\h_k(\wh{r},r^*)=\frac{1}{2k}\left(\sum_{i=1}^n\indc{\wh{r}_i>k, r^*_i\leq k}+\sum_{i=1}^n\indc{\wh{r}_i\leq k, r^*_i> k}\right). \label{eq:h-loss}
\end{equation}
The comparison between (\ref{eq:kendall}) and (\ref{eq:h-loss}) reveals the key difference between the two problems. While top-$k$ ranking only requires a correct classification of the two groups, the quality of  the  full ranking depends on the accuracy of each individual $|\wh{r}_i-r_i^*|$. It is easy to see that $\k(\wh{r},r)=0$ implies $\h_k(\wh{r},r^*)=0$, but the opposite direction is not true.

\paragraph{Regularity of Skill Parameters.}

For the nuisance parameter $\theta^*$ of the model (\ref{eq:BTL-theta}), it is necessary that the skill parameters of neighboring players $\theta_i^*$ and $\theta_{i+1}^*$ are separated so that the identification of the ranks is possible. We introduce a parameter space that serves for this purpose. For any $\beta>0$ and any $C_0\geq 1$, define
$$\Theta_n(\beta,C_0)=\left\{\theta\in\mathbb{R}^n: \theta_1\geq \cdots \geq\theta_n, 1 \leq \frac{|\theta_i-\theta_j|}{\beta|i-j|}\leq C_0\text{ for any }i\neq j\right\}.$$
In other words, neighboring $\theta_i^*$ and $\theta_{i+1}^*$  are required to be separated by at least $\beta$.  The magnitude of $\beta$ then characterizes the difficulty of  full ranking. The number $C_0$ characterizes the regularity of the space of sorted vectors $\Theta_n(\beta,C_0)$. The special case $\Theta_n(\beta,1)$ only consists of fully regular $\theta$'s that can be written as $\theta_i=\alpha-\beta i$. Throughout the paper, we assume that $C_0\geq 1$ is an absolute constant, but allow $\beta$ to be a function of the sample size $n$, with the possibility that $\beta\rightarrow 0$.

The assumption $\theta^*\in\Theta_n(\beta,C_0)$ implies that the numbers $\theta_1^*,\cdots,\theta_n^*$ to be roughly evenly spaced. This assumption, which can be certainly relaxed, allows us to obtain relatively clean formulas of the minimax rate of  full ranking. By restricting our focus to the space $\Theta_n(\beta,C_0)$, we will develop a clear but nontrivial understanding of the full ranking problem in this paper. The extension of our results beyond $\theta^*\in\Theta_n(\beta,C_0)$ will be briefly discussed in Section \ref{sec:disc}.

\section{Minimax Rates of Full Ranking}\label{sec:main}

In this section, we present the minimax rate of  full ranking under the BTL model. To better understand the results, we first derive the minimax rate of  full ranking under a Gaussian pairwise comparison model in Section \ref{sec:Gaussian-result}. This allows us to highlight some of the unique and nontrivial features of the BTL model by comparing the minimax rates of the two different distributions. Readers who are already familiar with the BTL model can directly start with Section \ref{sec:intuition}.

\subsection{Results for a Gaussian Model}\label{sec:Gaussian-result}

Consider the same comparison scheme modeled by the Erd\H{o}s-R\'{e}nyi random graph $A\sim\mathcal{G}(n,p)$. For any pair $(i,j)$ such that $A_{ij}=1$, we independently observe
\begin{equation}
y_{ij}\sim\n(\theta_{r_i^*}^*-\theta_{r_j^*}^*,\sigma^2). \label{eq:dgp-Gaussian}
\end{equation}
The joint distribution of $\{A_{ij}\}$ and $\{y_{ij}\}$, under the above generating process, is denoted by $\mathbb{P}_{(\theta^*,\sigma^2,r^*)}$.
Estimation of the rank vector $r^*\in\S_n$ under the Gaussian model (\ref{eq:dgp-Gaussian}) is much less complicated than the same problem under (\ref{eq:BTL-theta}), because of the separate parametrization of mean and variance.

\begin{thm}\label{thm:Gaussian-minimax}
Assume $\theta^*\in\Theta_n(\beta,C_0)$ for some constant $C_0\geq 1$ and $\frac{np}{\log n}\rightarrow\infty$. Then, for any constant $\delta$ that can be arbitrarily small, we have
$$\inf_{\wh{r}\in\S_n}\sup_{r^*\in\S_n}\mathbb{E}_{(\theta^*,\sigma^2,r^*)}\k(\wh{r},r^*)\gtrsim \begin{cases}
\frac{1}{n-1}\sum_{i=1}^{n-1}\exp\left(-\frac{(1+\delta)np(\theta_i^*-\theta_{i+1}^*)^2}{4\sigma^2}\right), & \frac{np\beta^2}{\sigma^2} > 1, \\
n\wedge\sqrt{\frac{\sigma^2}{np\beta^2}}, & \frac{np\beta^2}{\sigma^2} \leq 1.\\
\end{cases}$$
Moreover, let $\wh{r}$ be the rank obtained by sorting the MLE $\wh{\theta}$, and then
$$\sup_{r^*\in\S_n}\mathbb{E}_{(\theta^*,\sigma^2,r^*)}\k(\wh{r},r^*)\lesssim \begin{cases}
\frac{1}{n-1}\sum_{i=1}^{n-1}\exp\left(-\frac{(1-\delta)np(\theta_i^*-\theta_{i+1}^*)^2}{4\sigma^2}\right)+n^{-5}, & \frac{np\beta^2}{\sigma^2} > 1, \\
n\wedge\sqrt{\frac{\sigma^2}{np\beta^2}}, & \frac{np\beta^2}{\sigma^2} \leq 1.\\
\end{cases}$$
Both inequalities are up to constant factors only depending on $C_0$ and $\delta$.
\end{thm}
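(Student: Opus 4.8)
The plan is to prove the Gaussian minimax result by a matched pair of lower and upper bounds, where the key structural observation is that under the Gaussian model the rank vector is estimated by sorting coordinatewise estimates of $\theta^*$, and the dominant source of error is the confusion between adjacent players $i$ and $i+1$. For the \emph{lower bound}, I would localize the problem: fix the nuisance parameter $\theta^*\in\Theta_n(\beta,C_0)$ and, for each $i\in[n-1]$, consider the sub-family of rank vectors obtained from a reference $r^*$ by swapping the two players occupying ranks $i$ and $i+1$. Since $\k$ counts inversions, distinguishing these two hypotheses costs $\tfrac{1}{n}$ whenever the test errs, so by a two-point (Le Cam) argument the Bayes risk for each $i$ is at least $\tfrac{1}{n}$ times the testing error between $\mathbb{P}_{(\theta^*,\sigma^2,r^*)}$ and its swapped version. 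That testing error is controlled by the total variation / Hellinger affinity between two product Gaussians that differ only in the $O(np)$ games incident to players $i$ or $i+1$; a direct likelihood-ratio computation shows the optimal test is a sign test on a Gaussian statistic with mean $\asymp np(\theta_i^*-\theta_{i+1}^*)$ and variance $\asymp np\sigma^2$, giving error $\exp\!\big(-(1+o(1))\tfrac{np(\theta_i^*-\theta_{i+1}^*)^2}{4\sigma^2}\big)$. Averaging over $i$ (these events are nearly disjoint in their effect on $\k$, or one can simply take the max and lose only a constant) yields the claimed sum in the regime $np\beta^2/\sigma^2>1$; when $np\beta^2/\sigma^2\le 1$ one instead spreads the prior over many simultaneous adjacent swaps, or uses a Fano-type / Assouad argument over a hypercube of $\asymp \min(n,\sqrt{\sigma^2/(np\beta^2)})$ swaps that are each only weakly identifiable, to obtain the polynomial rate $n\wedge\sqrt{\sigma^2/(np\beta^2)}$.

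For the \emph{upper bound}, let $\wh\theta$ be the MLE (equivalently, the least-squares estimate of $\theta$ under the linear model $y_{ij}=\theta_{r_i^*}-\theta_{r_j^*}+\text{noise}$, which is a Laplacian-type regression on the comparison graph), and let $\wh r$ sort $\wh\theta$. By the relation $\tfrac12\F\le\k\le\F$ it suffices to bound $\mathbb{E}\,\F(\wh r,r^*)$, and by a standard argument $|\wh r_i-r_i^*|$ is bounded by the number of $j$ with $|\theta^*_{r^*_j}-\theta^*_{r^*_i}|$ small yet $\wh\theta$ ordering $i,j$ incorrectly; using the $\beta$-separation from $\Theta_n(\beta,C_0)$, each such misordering between ranks $i$ and $i+m$ requires $|(\wh\theta-\theta^*)_i|+|(\wh\theta-\theta^*)_j|\gtrsim m\beta$. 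So I would: (i) establish a coordinatewise Gaussian approximation / concentration for $\wh\theta_i-\theta^*_i$ with the correct local variance $\sigma^2/(np)$ up to $1+o(1)$ factors — this uses $np/\log n\to\infty$ to guarantee the comparison graph Laplacian is well-conditioned (spectral gap $\asymp np$) and that the leading-order behavior of $\wh\theta_i$ is a sum of $\asymp np$ nearly independent noise terms; (ii) bound the probability of each pairwise misordering by the corresponding Gaussian tail $\exp(-(1-o(1))np m^2\beta^2/(4\sigma^2))$, plus a uniform $n^{-5}$-level "bad event'' where the Laplacian concentration or a union bound fails; (iii) sum over $i$ and over $m\ge 1$, where the geometric decay in $m$ collapses the sum to its $m=1$ term $\tfrac1{n-1}\sum_i\exp(-(1-o(1))np(\theta_i^*-\theta_{i+1}^*)^2/(4\sigma^2))$, and in the low-SNR regime instead bound $\F$ trivially by the spread of $\wh\theta-\theta^*$, giving $n\wedge\sqrt{\sigma^2/(np\beta^2)}$.

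The main obstacle I anticipate is step (i): obtaining the \emph{sharp} constant $4$ in the exponent, which requires a genuine first-order distributional expansion of a single coordinate of the graph-Laplacian least-squares estimator — showing $\wh\theta_i-\theta^*_i = \tfrac{1}{d_i}\sum_{j\sim i}\varepsilon_{ij}(\text{sign term}) + \text{negligible}$ with $d_i\asymp np$, so that the effective noise in the statistic $\wh\theta_i-\wh\theta_{i+1}$ has variance $(2+o(1))\sigma^2/(np)$ and the adjacent-pair error is $\Phi(-\sqrt{np}\,(\theta_i^*-\theta_{i+1}^*)/(2\sigma))$. Controlling the higher-order Laplacian-inverse terms uniformly over all $i$, and doing so on an event of probability $1-n^{-5}$, is the delicate part; the matching lower bound is comparatively routine once the two-point reduction is set up, but care is needed to make the averaged (rather than worst-case over $i$) lower bound come out, and to handle the crossover at $np\beta^2/\sigma^2\asymp 1$ so the two regimes glue continuously.
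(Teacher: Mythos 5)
Your lower-bound strategy is essentially the paper's: a Le Cam two-point reduction (swap the players at ranks $a$ and $a+1$, then average over the uniform prior on $\S_n$ and all pairs), with the testing error controlled by a Gaussian likelihood-ratio tail — this is exactly Lemma~\ref{lem:Gaussian-two-point}. One simplification you miss: for the low-SNR polynomial rate you propose a separate Assouad/Fano construction, but the paper gets it for free from the very same sum of two-point errors — for pairs $(a,b)$ with $|a-b|\lesssim\sqrt{\sigma^2/(np\beta^2)}$ the testing error is $\Theta(1)$, and summing those already yields $\sqrt{\sigma^2/(np\beta^2)}$ after the $\frac{1}{n}$ normalization. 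No additional machinery is needed.

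On the upper bound, you have the right skeleton (sort the least-squares $\wh\theta$, bound $\E\k$ by pairwise misordering probabilities, control the error via the Laplacian spectral gap, absorb a bad event into $n^{-5}$), but the obstacle you flag as ``the delicate part'' — a genuine first-order distributional expansion of $\wh\theta_i - \theta_i^*$ to pin down the constant $4$ — does not arise at all in the Gaussian model. Conditional on the comparison graph $A$, the least-squares estimator $\wh\theta$ in the linear model $y_{ij}=\theta_{r_i^*}^*-\theta_{r_j^*}^*+\varepsilon_{ij}$ is a \emph{linear} function of the noise, hence \emph{exactly} Gaussian:
$$
\wh\theta \mid A \ \sim\ \n\bigl(c\mathds{1}_n + \theta_{r^*}^*,\ \sigma^2\mathcal{L}_A^\dagger\bigr),
$$
so $\P(\wh\theta_i<\wh\theta_j\mid A)$ is an exact Gaussian tail with variance $\sigma^2\, x_{ij}^T\mathcal{L}_A^\dagger x_{ij}$, where $x_{ij}=e_i-e_j$. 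The only randomness to tame is that of $A$, and $x_{ij}^T\mathcal{L}_A^\dagger x_{ij}\le \|x_{ij}\|^2/\lambda_{\min,\perp}(\mathcal{L}_A)=2/\lambda_{\min,\perp}(\mathcal{L}_A)$, with $\lambda_{\min,\perp}(\mathcal{L}_A)\ge np-O(\sqrt{np\log n})=(1-o(1))np$ on an event of probability $1-O(n^{-9})$ (Lemma~\ref{lem:Laplace-eigen}); this is what delivers the sharp constant $4$, and the complement of that event is exactly where the $n^{-5}$ term comes from. In short, there is no need to ``show $\wh\theta_i-\theta_i^*$ is approximately a degree-normalized sum of local noises'' and control higher-order Laplacian-inverse terms — that would be the BTL-model difficulty, and it is precisely what makes Theorem~\ref{thm:BTL-minimax} hard; here the conditional exactness of Gaussianity makes the upper bound a two-line computation once the spectral bound on $\mathcal{L}_A$ is in hand.
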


Theorem \ref{thm:Gaussian-minimax} characterizes the statistical fundamental limit of  full ranking under the Gaussian comparison model. The result holds for each individual $\theta^*\in\Theta_n(\beta,C_0)$. It is interesting to note that the minimax rate exhibits a transition between an exponential rate and a polynomial rate. By scrutinizing the proof, the constant $\delta$ can be replaced by some sequence $\delta_n=o(1)$. Therefore, consider a special example $\theta^*\in\Theta_n(\beta,1)$, and the minimax rate (ignoring the $n^{-5}$ term) can be simplified as
\begin{equation}
\begin{cases}
\exp\left(-\frac{(1+o(1))np\beta^2}{4\sigma^2}\right), & \frac{np\beta^2}{\sigma^2} > 1, \\
n\wedge\sqrt{\frac{\sigma^2}{np\beta^2}}, & \frac{np\beta^2}{\sigma^2} \leq 1.\\
\end{cases}\label{eq:G-minimax-simp}
\end{equation}
The behavior of (\ref{eq:G-minimax-simp}) is illustrated in Figure \ref{fig:phase}.
\begin{figure}[h]
	\centering
	\includegraphics[width=0.7\textwidth]{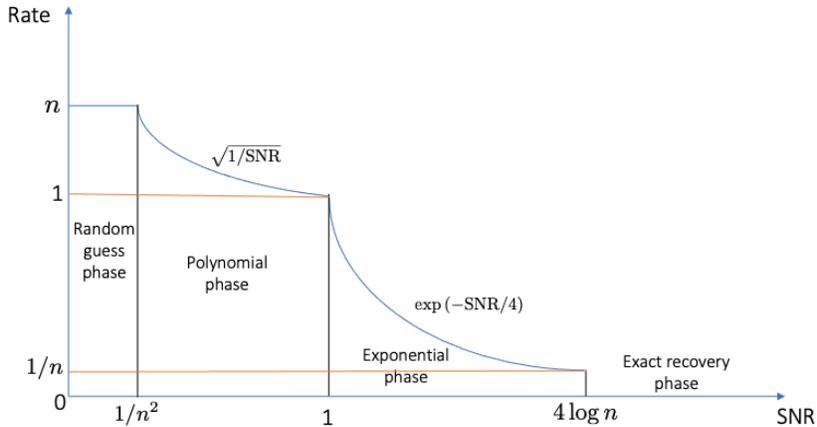}
	\caption{\textsl{Illustration of the the minimax rate of  full ranking.}}
	\label{fig:phase}
\end{figure}
The quantity $\frac{np\beta^2}{\sigma^2}$ plays the role of the signal-to-noise ratio of the ranking problem. In the high SNR regime $\frac{np\beta^2}{\sigma^2} > 1$, the difficulty of the ranking problem is dominated by whether the data can distinguish each $r_i^*$ from its neighboring values. Therefore, ranking is essentially a \textit{hypothesis testing} problem, which leads to an exponential rate. In the low SNR regime $\frac{np\beta^2}{\sigma^2} \leq 1$, the discrete nature of ranking is absent because of the noise level. The recovery of $r^*$ is equivalent to the estimation of a continuous vector in $\mathbb{R}^n$, which is essentially a \textit{parameter estimation} problem. The polynomial rate $n\wedge\sqrt{\frac{\sigma^2}{np\beta^2}}$ is the usual minimax rate for estimating an $n$-dimensional parameter under the $\ell_1$ loss. It is also worthing noting that the rate (\ref{eq:G-minimax-simp}) implies that the rank vector can be exactly recovered when $\frac{np\beta^2}{\sigma^2} > C\log n$ for any constant $C>4$. This is because in this regime, we have $\k(\wh{r},r^*)=o(n^{-1})$ with high probability by a direct application of Markov's inequality. According to the definition of $\k(\wh{r},r^*)$, we know $\k(\wh{r},r^*)=o(n^{-1})$ implies $\k(\wh{r},r^*)=0$.

The upper bound of Theorem \ref{thm:Gaussian-minimax} involves an extra $n^{-5}$ term in the high SNR regime. According to the proof, the number $5$ in the exponent can actually be replaced by an arbitrarily large constant. The $n^{-5}$ term does not contribute to the high-probability bound. By a direct application of Markov's inequality, when $\frac{np\beta^2}{\sigma^2}\rightarrow\infty$, we have
\begin{equation}
\k(\wh{r},r^*) \lesssim \frac{1}{n-1}\sum_{i=1}^{n-1}\exp\left(-\frac{(1-\delta)np(\theta_i^*-\theta_{i+1}^*)^2}{4\sigma^2}\right), \label{eq:no-n-5}
\end{equation}
with probability $1-o(1)$. Notice that the high-probability bound (\ref{eq:no-n-5}) does not involve the $n^{-5}$. This is because when $\k(\wh{r},r^*)$ is nonzero, it must be at least $n^{-1}$ by the definition of the loss function. Therefore, $n^{-5}$ can always be absorbed into the other term of the upper bound.

We also remark that the condition $\frac{np}{\log n}\rightarrow\infty$ guarantees that the random graph $A$ is connected with high probability. It is well known that when $p\leq c\frac{\log n}{n}$ for some sufficiently small constant $c>0$, the random graph has several disjoint components, which makes the comparisons between different components impossible.

An optimal estimator that achieves the minimax rate is the rank vector induced by the MLE, which is defined by
\begin{equation}
\wh{\theta}\in\argmin_{\theta}\sum_{1\leq i<j\leq n}A_{ij}\Big(y_{ij}-(\theta_i-\theta_j)\Big)^2. \label{eq:Gaussian-ls}
\end{equation}
We note that the parameter $\theta^*$ in (\ref{eq:dgp-Gaussian}) is identifiable up to a global shift. We may put an extra constraint $\mathds{1}_n^T\theta=0$ in the least-squares estimator above, so that $\wh{\theta}$ is uniquely defined. However, this constraint is actually not essential, since even without it, the rank vector $\wh{r}$ induced by $\wh{\theta}$ is still uniquely defined. To study the property of $\wh{\theta}$, we introduce a diagonal matrix $D\in\mathbb{R}^{n\times n}$ whose entries are given by $D_{ii}=\sum_{j\in[n]\backslash\{i\}}A_{ij}$. Then, $\mathcal{L}_A=D-A$ is the graph Laplacian of $A$. A standard least-squares analysis of (\ref{eq:Gaussian-ls}) leads to the fact that up to some global shift,
\begin{equation}
\wh{\theta}\sim \n\left(\theta^*,\sigma^2\mathcal{L}_A^{\dagger}\right), \label{eq:MLE-distribution}
\end{equation}
where $\mathcal{L}_A^{\dagger}$ is the generalized inverse of $\mathcal{L}_A$. The covariance matrix of (\ref{eq:MLE-distribution}) is optimal by achieving the intrinsic Cram{\'e}r-Rao lower bound of the problem \citep{boumal2013intrinsic}. Without loss of generality, we can assume $r_i^*=i$ for each $i\in[n]$. Then, by the definition of the loss function (\ref{eq:kendall}), we have
$$\mathbb{E}\k(\wh{r},r^*) = \frac{1}{n}\sum_{1\leq i<j\leq n}\mathbb{P}\left(\wh{r}_i>\wh{r}_j\right) = \frac{1}{n}\sum_{1\leq i<j\leq n}\mathbb{P}\left(\wh{\theta}_i>\wh{\theta}_j\right),$$
and each $\mathbb{P}\left(\wh{\theta}_i>\wh{\theta}_j\right)$ can be accurately estimated by a Gaussian tail bound under the distribution (\ref{eq:MLE-distribution}), which then leads to the upper bound result of Theorem \ref{thm:Gaussian-minimax}. A detailed proof of Theorem \ref{thm:Gaussian-minimax}, including a lower bound analysis, is given in Section \ref{sec:pf-G}.

\subsection{Some Intuitions for the BTL Model} \label{sec:intuition}

Before stating the minimax rate for the BTL model, we discuss a few key differences that one can expect from the result. Without of loss of generality, we assume $r_i^*=i$ for all $i\in[n]$ throughout the discussion to simplify the notation. Let us consider a problem of oracle estimation of the skill parameter of the first player $\theta_1^*$. To be specific, we would like to estimate $\theta_1^*$ by assuming that $\theta_2^*,\cdots,\theta_n^*$ are known. The Fisher information of this problem can be shown as
\begin{equation}
I^{\rm oracle}(\theta_1^*)=Lp\sum_{j=2}^n\psi'(\theta_1^*-\theta_j^*). \label{eq:oracle-fisher}
\end{equation}
The formula (\ref{eq:oracle-fisher}) characterizes the individual contribution of each player to the overall information in estimating $\theta_1^*$. That is, the information from the games between $1$ and $j$ is quantified by $Lp\psi'(\theta_1^*-\theta_j^*)$. Since $\psi'(t)=\frac{e^{t}}{(1+e^{t})^2}\leq e^{-|t|}$, we have
$$\psi'(\theta_1^*-\theta_j^*)\leq \exp\left(-|\theta_1^*-\theta_j^*|\right).$$
In other words, $\psi'(\theta_1^*-\theta_j^*)$ is an exponentially small function of the skill difference $|\theta_1^*-\theta_j^*|$. This means for players whose skills are significantly different from $\theta_1^*$, their games with Player 1 offers little information in the inference of $\theta_1^*$.

This phenomenon can be intuitively understood from the following simple example illustrated in Figure \ref{fig:example}.
\begin{figure}[h]
	\centering
	\includegraphics[width=0.3\textwidth]{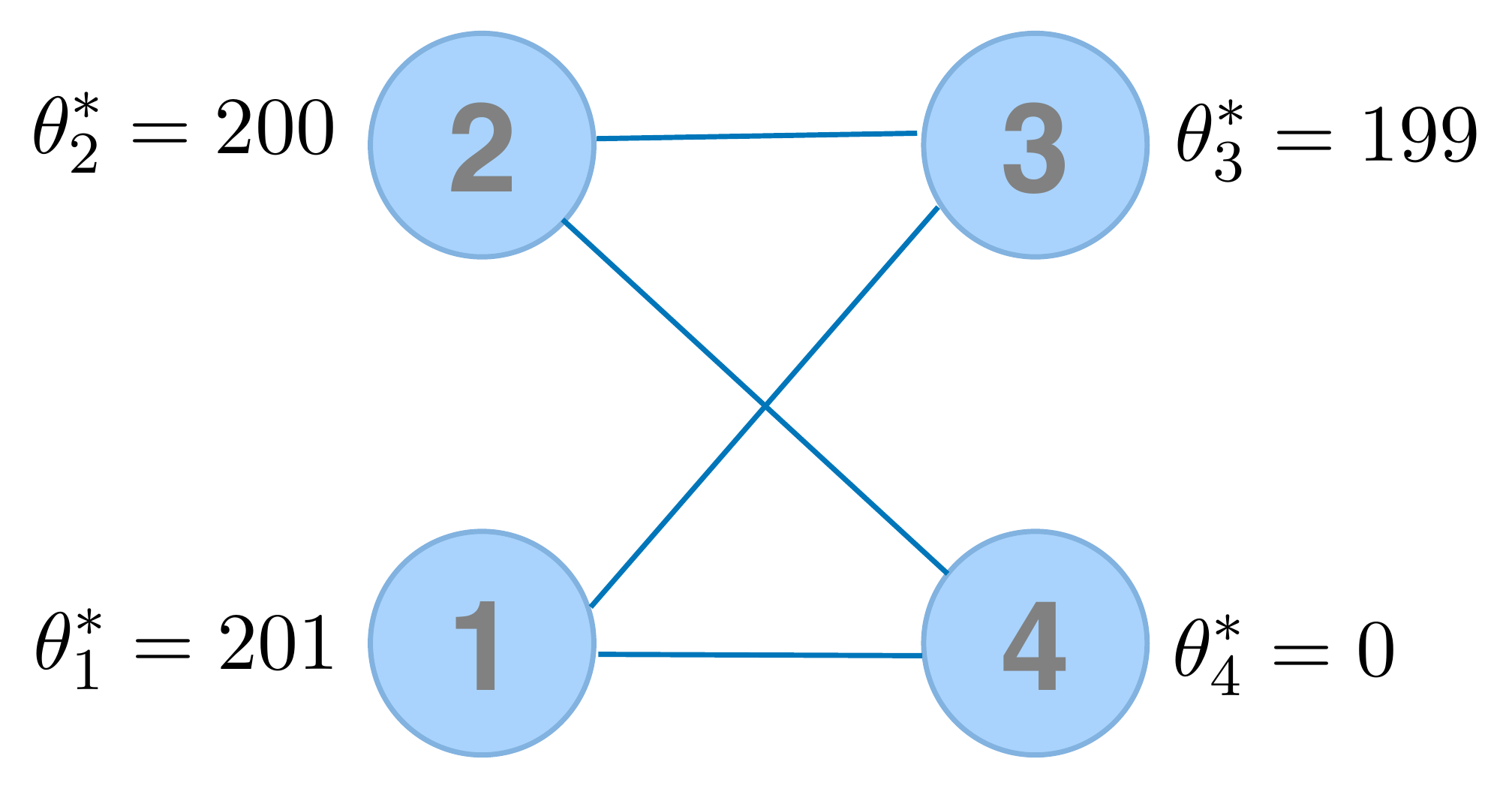}
	\caption{\textsl{A comparison graph of four players.}}
	\label{fig:example}
\end{figure}
Consider four players with skill parameters $(\theta_1^*,\theta_2^*,\theta_3^*,\theta_4^*)=(201,200,199,0)$, and we would like to compare the first two players. With the direct link between $1$ and $2$ missing, the only way to compare Players 1 and 2 is through their performances against Players 3 and 4. Since both $\theta_1^*-\theta_4^*=201$ and $\theta_2^*-\theta_4^*=200$ are very large numbers, it is very likely that Player 4 will lose all games against Players 1 and 2. On the other hand, we have $\theta_1^*-\theta_3^*=2$ and $\theta_2^*-\theta_3^*=1$, and thus Player 3 is likely to lose more games against Player 1 than against Player 2. Therefore, we can conclude that Player 1 is stronger than Player 2 based on their performances against Player 3, and the games against Player 4 offer no information for this purpose. This example clearly illustrates that closer opponents are more informative.

Mathematically, for any $\theta^*\in\Theta_n(\beta,C_0)$ and any $M>0$, it can be easily shown that
\begin{equation}
I^{\rm oracle}(\theta_1^*)\leq (1+O(e^{-M}))Lp\sum_{j\leq M/\beta}\psi'(\theta_1^*-\theta_j^*). \label{eq:oracle-fisher-trunc}
\end{equation}
Therefore, (\ref{eq:oracle-fisher}) and (\ref{eq:oracle-fisher-trunc}) imply that
\begin{equation}
I^{\rm oracle}(\theta_1^*)=(1+O(e^{-M}))Lp\sum_{j\leq M/\beta}\psi'(\theta_1^*-\theta_j^*). \label{eq:oracle-fisher-asymp}
\end{equation}
There is no need to consider the games against players with $j>M/\beta$. Moreover, we also observe from (\ref{eq:oracle-fisher-asymp}) that the parameter $\beta$ plays two different roles in the BTL model:
\begin{enumerate}
\item The parameter $\beta$ is the minimal gap between different players, and it quantifies the signal strength of the BTL model.
\item The number $1/\beta$ quantifies the number of close opponents of each player, and thus $p/\beta$ can be understood as the effective sample size of the BTL model.
\end{enumerate}
While the first role is also shared by the $\beta$ in the Gaussian comparison model (\ref{eq:dgp-Gaussian}), the second role dramatically distinguishes the BTL model from its Gaussian counterpart. The effective sample size of the Gaussian model is $np$, compared with $p/\beta$ of the BTL model. This critical difference is a consequence of the nonlinearity of the logistic function. Increasing $\beta$ magnifies the signal but reduces the effective sample size at the same time. The precise role of $\beta$ in  full ranking under the BTL model will be clarified by the formula of the minimax rate.

\subsection{Results for the BTL Model}

To present the minimax rate of  full ranking under the BTL model, we first introduce some new quantities. For any $i\in[n]$, define
\begin{equation}
V_i(\theta^*)=\frac{n}{\sum_{j\in[n]\backslash\{i\}}\psi^\prime(\theta_i^*-\theta_j^*)}. \label{eq:BTL-var}
\end{equation}
The quantity (\ref{eq:BTL-var}) is interpreted as the variance function of the $i$th best player. With a slight abuse of notation, the expectation associated with the BTL model is denoted as $\mathbb{E}_{(\theta^*,r^*)}$.

\begin{thm}\label{thm:BTL-minimax}
Assume $\theta^*\in\Theta_n(\beta,C_0)$ for some constant $C_0\geq 1$ and $\frac{p}{(\beta\vee n^{-1})\log n}\rightarrow\infty$. Then, for any constant $\delta$ that can be arbitrarily small, we have
$$\inf_{\wh{r}\in\S_n}\sup_{r^*\in\S_n}\mathbb{E}_{(\theta^*,r^*)}\k(\wh{r},r^*)\gtrsim \begin{cases}
\frac{1}{n-1}\sum_{i=1}^{n-1}\exp\left(-\frac{(1+\delta)npL(\theta_i^*-\theta_{i+1}^*)^2}{4V_{i}(\theta^*)}\right), & \frac{Lp\beta^2}{\beta\vee n^{-1}} > 1, \\
n\wedge\sqrt{\frac{\beta\vee n^{-1}}{Lp\beta^2}}, & \frac{Lp\beta^2}{\beta\vee n^{-1}} \leq 1.\\
\end{cases}$$
Moreover, let $\wh{r}$ be the rank computed by Algorithm \ref{alg:whole}, and then if additionally $\frac{L}{\log n}\rightarrow\infty$, we have
$$\sup_{r^*\in\S_n}\mathbb{E}_{(\theta^*,r^*)}\k(\wh{r},r^*)\lesssim \begin{cases}
\frac{1}{n-1}\sum_{i=1}^{n-1}\exp\left(-\frac{(1-\delta)npL(\theta_i^*-\theta_{i+1}^*)^2}{4V_{i}(\theta^*)}\right)+n^{-5}, & \frac{Lp\beta^2}{\beta\vee n^{-1}} > 1, \\
n\wedge\sqrt{\frac{\beta\vee n^{-1}}{Lp\beta^2}}, & \frac{Lp\beta^2}{\beta\vee n^{-1}} \leq 1.\\
\end{cases}$$
Both inequalities are up to constant factors only depending on $C_0$ and $\delta$.
\end{thm}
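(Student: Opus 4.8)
The statement has two halves — a minimax lower bound valid for every estimator, and a matching upper bound for the output $\wh r$ of Algorithm~\ref{alg:whole} — and both run parallel to the Gaussian analysis behind Theorem~\ref{thm:Gaussian-minimax}, the new features being that in the BTL model the ``noise level'' seen by the $i$th player is the data-dependent quantity $V_i(\theta^*)$ rather than a fixed $\sigma^2$, and that the estimator is built in two stages. Assuming without loss of generality $r^*=\mathrm{id}$ and writing $\k(\wh r,r^*)=\frac1n\sum_{i<j}\Prob(\wh r_i>\wh r_j)$, everything reduces to estimating, from below for the lower bound and from above for the upper bound, the single pairwise error probability $\Prob(\wh r_i>\wh r_j)$ for $i<j$.

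\emph{Lower bound.} In the high-SNR regime fix $\theta^*$ and, for $i\in[n-1]$, let $\tau_i$ transpose $i$ and $i+1$. A two-point argument bounds the Bayes risk of testing $\Prob_{(\theta^*,\mathrm{id})}$ against $\Prob_{(\theta^*,\tau_i)}$ below by $\exp(-(1+\delta)C_i)$, where $C_i$ is the Chernoff information between the two joint laws. Conditionally on the comparison graph $A$, these laws are products of Bernoulli's and only the games incident to ranks $i,i+1$ differ, so $C_i$ factorizes; an exponential-tilting (change-of-measure) computation — crucially, not a Hellinger/Bhattacharyya bound, which loses a factor two in the exponent — yields the sharp constant, and expanding the per-game Bernoulli Chernoff information together with the graph concentration $\sum_{j:A_{ij}=1}\psi'(\theta_i^*-\theta_j^*)=(1+o(1))p\sum_{j\neq i}\psi'(\theta_i^*-\theta_j^*)$ gives
$$C_i=(1+o(1))\,\frac{Lp(\theta_i^*-\theta_{i+1}^*)^2}{4}\sum_{j\neq i}\psi'(\theta_i^*-\theta_j^*)=(1+o(1))\,\frac{npL(\theta_i^*-\theta_{i+1}^*)^2}{4V_i(\theta^*)}.$$
Globalizing via Assouad over a hypercube of commuting transpositions — independent fair coins on $\{\tau_i:i\in S\}$ for $S=\{1,3,5,\dots\}$ and then for $S'=\{2,4,6,\dots\}$, using $n\k(\wh r,r_\omega)\geq\sum_{i\in S}\indc{\wh r \text{ inverts } i,i+1}$ — produces $\inf_{\wh r}\sup_{r^*}\E\,\k\gtrsim\frac1n\sum_{i\in S}e^{-(1+\delta)C_i}$, and taking the better parity recovers $\frac1n\sum_{i=1}^{n-1}e^{-(1+\delta)C_i}$ (the $O(1)$ games shared by $\tau_i,\tau_j$, $i,j\in S$, only perturb $\delta$). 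In the low-SNR regime the two-point family is too weak; one instead packs $\S_n$ by locally reshuffling consecutive blocks of length $\asymp\beta^{-1}\big((\beta\vee n^{-1})/(Lp\beta^2)\big)^{1/2}$ and applies Fano — or, when $\beta$ is small, transfers the already-derived Gaussian lower bound of Theorem~\ref{thm:Gaussian-minimax} with $\sigma^2\asymp L^{-1}$, which the BTL model dominates — giving the rate $n\wedge\sqrt{(\beta\vee n^{-1})/(Lp\beta^2)}$ (the $n$ being the trivial bound $\k\leq\binom n2/n$).

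\emph{Upper bound.} Algorithm~\ref{alg:whole} uses a preliminary ranking (a spectral or MLE step on part of the games) to split $[n]$ into consecutive, slightly overlapping leagues of size $m\asymp\min\{n,\mathrm{polylog}(n)/\beta\}$, computes within each league $G_s$ a local MLE $\wh\theta^{(s)}$ from the games internal to $G_s$, and sorts-and-concatenates. Existing $\ell_\infty$ guarantees for the preliminary step (\cite{negahban2017rank,chen2019spectral}) show that, with probability $1-n^{-5}$, every player sits in its correct league up to a negligible boundary, so any pair $i<j$ in distinct leagues is ordered correctly — contributing at most $n^{-5}$ to $\E\,\k$. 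For $i<j$ in a common league $G_s$ one has $\{\wh r_i>\wh r_j\}=\{\wh\theta^{(s)}_i<\wh\theta^{(s)}_j\}$, and the key estimate is $\Prob(\wh\theta^{(s)}_i<\wh\theta^{(s)}_j)\leq\exp(-(1-\delta)\,npL(\theta_i^*-\theta_j^*)^2/(4V_i(\theta^*)))+n^{-5}$, obtained from (a) a Bahadur-type expansion of the local MLE into a sum of independent logistic-score increments plus a remainder controlled with probability $1-n^{-5}$ (this is where $L/\log n\to\infty$ is used), (b) the fact that because $m\gg\mathrm{polylog}(n)/\beta$ the league-restricted Fisher information $\sum_{j\in G_s}\psi'(\theta_i^*-\theta_j^*)$ equals $(1+o(1))\,n/V_i(\theta^*)$ for all $i$ away from the league boundary, so the local MLE matches the oracle, and (c) a sharp Chernoff tail bound on $\wh\theta^{(s)}_i-\wh\theta^{(s)}_j$ mirroring the lower bound. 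Since $\beta|i-j|\leq|\theta_i^*-\theta_j^*|\leq C_0\beta|i-j|$, summing over $j$ in the high-SNR regime is geometric and dominated by $j=i+1$, giving $\E\,\k\lesssim\frac1n\sum_{i=1}^{n-1}e^{-(1-\delta)C_i}+n^{-5}$. In the low-SNR regime one instead uses $\E\|\wh\theta^{(s)}-\theta^*_{G_s}\|^2\lesssim|G_s|V/(npL)$ from the local least-squares analysis, sums over leagues, and converts $\ell_1$ error of $\theta$ into rank loss via (\ref{eq:FandK}) and the elementary bound $|\wh r_i-r_i^*|\lesssim\beta^{-1}(|\wh\theta_i-\theta_i^*|+\text{local fluctuation})$, capped at $n$.

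\emph{Main obstacles.} Two points are delicate. First, getting the \emph{sharp} constant $1\pm\delta$ (rather than the mere order of the exponent) forces an explicit exponential change of measure to the Chernoff-optimal tilt on both sides, with a careful comparison of the tilted log-likelihood to its quadratic approximation. Second — and this is the genuinely novel difficulty flagged in the abstract — the league assignments of $i$ and $j$ and the local fluctuation $\wh\theta^{(s)}_i-\wh\theta^{(s)}_j$ are driven by overlapping data, so one must establish their \emph{approximate independence}: split the $L$ replicates between the two stages and/or run a leave-two-out argument that freezes all other players' leagues while re-randomizing the games incident to $i$ and $j$, and show the resulting coupling error is negligible at the exponential scale. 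A third, more routine, ingredient is the suite of concentration bounds identifying the random, league-restricted Fisher information with $n/V_i(\theta^*)$ up to $1+o(1)$ uniformly in $i$, which is where the assumption $p/((\beta\vee n^{-1})\log n)\to\infty$ is consumed.
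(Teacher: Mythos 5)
Your lower-bound half is essentially sound and close in spirit to the paper: the paper also reduces to a two-point test between $r^*$ and a transposed $r^{*(i,j)}$ and gets the sharp exponent by an explicit exponential change of measure (Neyman--Pearson plus a tilted CLT), arriving at the same $(1+\delta)\frac{npL(\theta_i^*-\theta_{i+1}^*)^2}{4V_i(\theta^*)}$. The globalization differs: instead of Assouad over commuting transpositions, the paper averages over the uniform prior on $\S_n$ and over pairs, and it obtains the polynomial regime from the very same two-point bound by summing over the $\asymp\sqrt{(\beta\vee n^{-1})/(Lp\beta^2)}$ pairs per index whose testing error is a constant --- no Fano or packing is needed. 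Your alternative "transfer the Gaussian lower bound with $\sigma^2\asymp L^{-1}$" is not justified as stated (there is no comparison-of-experiments argument available), but your Assouad/Fano route could be made to work, so I do not count this as a gap.

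The genuine gap is in the upper bound. First, the theorem is about the specific Algorithm~\ref{alg:whole}, whose Step 1 is a thresholded domination count ($w_i^{(k)}\leq h$ with $h=pM/\beta$), not a preliminary spectral/MLE ranking with polylog-sized overlapping leagues; and its guarantee for cross-league pairs holds only when the two leagues are separated by at least one intermediate league (Conclusion 3 of Theorem~\ref{thm:alg-1}) --- your claim that \emph{any} pair in distinct leagues is correctly ordered with probability $1-n^{-5}$ fails for adjacent leagues, where boundary players can be arbitrarily close in skill; the paper handles exactly those pairs through the local MLE over $S_{k-1}\cup S_k\cup S_{k+1}\cup S_{k+2}$ restricted to the close-pair edge set $\mathcal{E}$. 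Second, and more fundamentally, you flag the dependence between the data-driven partition and the local MLE as the key obstacle but do not resolve it: splitting the $L$ replicates (which the paper also does, $\bar y^{(1)}$ vs.\ $\bar y^{(2)}$) does not suffice because both stages share the same graph $A$, and a leave-two-out coupling must beat error probabilities that are themselves exponentially small, which you do not show. The paper's mechanism is different and concrete: it proves (Theorem~\ref{thm:alg-1}, Conclusion 4, and Lemma~\ref{lem:MLE-check}) that with high probability the partition \emph{equals} a surrogate $\{\check S_k\}$ measurable with respect to comparisons between far-apart players ($|\theta^*_{r_i^*}-\theta^*_{r_j^*}|>1.9M$), while the local likelihood \emph{equals} a surrogate $\check\ell^{(k)}$ depending only on close pairs ($<1.1M$) and $\bar y^{(2)}$, so that conditioning on the partition is legitimate and Lemma~\ref{lem:prev-paper} (the leave-one-out tail bound for the local MLE) applies with the sharp constant. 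Without this step (or an equally explicit device) your key estimate $\Prob(\wh\theta^{(s)}_i<\wh\theta^{(s)}_j)\leq\exp(-(1-\delta)npL(\theta_i^*-\theta_j^*)^2/(4V_i(\theta^*)))+n^{-5}$ is not established. Finally, your separate low-SNR argument via a per-league $\ell_2$ bound and $|\wh r_i-r_i^*|\lesssim\beta^{-1}|\wh\theta_i-\theta_i^*|$ ignores that the local MLEs are only defined up to a per-league shift, so there is no global $\wh\theta$ to compare with $\theta^*$; the paper sidesteps this by deriving the polynomial rate from the same pairwise exponential bound via a Gaussian-sum comparison.
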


Similar to Theorem \ref{thm:Gaussian-minimax}, the result of Theorem \ref{thm:BTL-minimax} holds for each individual $\theta^*\in\Theta_n(\beta,C_0)$, and the minimax rate also exhibits a transition between an exponential rate and a polynomial rate. To better understand the minimax rate formula, we use Lemma \ref{lem:sum-psi-prime} to quantify the order of the variance function $V_{i}(\theta^*)$. There exist constants $C_1,C_2>0$, such that
$$C_1\left(\beta\vee \frac{1}{n}\right)\leq \frac{V_{i}(\theta^*)}{n}\leq C_2\left(\beta\vee \frac{1}{n}\right).$$
Therefore, when $\beta\gtrsim n^{-1}$, the minimax rate (ignoring the $n^{-5}$ term) can be simplified as
\begin{equation}
\begin{cases}
\exp\left(-\Theta(Lp\beta)\right), & Lp\beta > 1, \\
n\wedge\sqrt{\frac{1}{Lp\beta}}, & Lp\beta \leq 1.\\
\end{cases} \label{eq:BTL-minimax-simp}
\end{equation}
The formula (\ref{eq:BTL-minimax-simp}) also exhibits a transition between a polynomial rate and an exponential rate. Its behavior can be illustrated by Figure \ref{fig:phase} with SNR being $\Theta(LP\beta)$.
Compared with the minimax rate (\ref{eq:G-minimax-simp}) for the Gaussian comparison model, the dependence of (\ref{eq:BTL-minimax-simp}) on $\beta$ is weaker. This is a consequence of the dual roles of $\beta$  discussed in Section \ref{sec:intuition}. In fact, by writing
$$Lp\beta=L\beta^{-1}p \beta^2,$$
we can directly observe the effects of $\beta^{-1}p$ and $\beta^2$ as the effective sample size and the signal strength, respectively. On the other hand, the number of total players $n$ has very little effect on the minimax rate formula.

The condition $\frac{p}{(\beta\vee n^{-1})\log n}\rightarrow\infty$ required by Theorem \ref{thm:BTL-minimax} can be equivalently written as $\frac{np}{\log n}\rightarrow\infty$ and $\frac{p}{\beta\log n}\rightarrow\infty$. Compared with the setting of Theorem \ref{thm:Gaussian-minimax}, an additional condition $\frac{p}{\beta\log n}\rightarrow\infty$ is assumed for the BTL model. This condition can be seen as a consequence of the Fisher information formula (\ref{eq:oracle-fisher-asymp}) that statistical inference on the skill parameter of each player only depends on the player's close opponents. In other words, for each $\theta_i^*$, the information is available in the games on the local graph
\begin{equation}
\mathcal{A}_i=\left\{A_{jk}: |r_j^*-r_i^*|\leq \frac{M}{\beta}, |r_k^*-r_i^*|\leq \frac{M}{\beta}\right\}.\label{eq:nei-i}
\end{equation}
All the other games have little information in the statistical inference of $\theta_i^*$. Therefore, it is required that the local graph $\mathcal{A}_i$ is connected. The condition $\frac{p}{\beta\log n}\rightarrow\infty$ guarantees the connectivity of $\mathcal{A}_i$ for all $i\in[n]$. Note that the size of the local graph is $O(\beta^{-1})$, which again justifies that the effective sample size of the BTL model is $p/\beta$ instead of $pn$ in the Gaussian case. Since the local graph $\mathcal{A}_i$ is unknown, 
the additional $\frac{L}{\log n}\rightarrow\infty$ assumption is needed in the upper bound to estimate it or its surrogate.

\section{A Divide-and-Conquer Algorithm}\label{sec:dnc}

We introduce a fully adaptive and computationally efficient algorithm for ranking under the BTL model in this section. We first outline the main idea in Section \ref{sec:overview}. Details of the algorithm are presented in Section \ref{sec:BTL-algo}, and the statistical properties are analyzed in Section \ref{sec:analysis}.

\subsection{An Overview}\label{sec:overview}

In the Gaussian comparison model, we first compute the global MLE for the skill parameters via the least-squares optimization (\ref{eq:Gaussian-ls}), and then rank the players according to the estimators of the skills. This simple idea does not generalize to the BTL model, since the statistical information of each player concentrates on its close opponents, a phenomenon that is discussed in Section \ref{sec:intuition}. Therefore, instead of using the global MLE, we should maximize likelihood functions that are only defined by players whose abilities are close. This modification not only addresses the information-theoretic issue of the BTL model that we just mentioned, but it also leads to Hessian matrices that are well conditioned, a property that is critical for efficient convex optimization.

For Player $i$, the set of close opponents that are sufficient for optimal statistical inference is given by $\mathcal{A}_i$ defined in (\ref{eq:nei-i}). Suppose the knowledge of $\mathcal{A}_i$ was available, we could compute the local MLE using games only against players in $\mathcal{A}_i$. This idea is roughly correct, but there are several nontrivial issues that we need to solve before making it actually work. The first issue lies in the identifiability of the BTL model that $\theta_i^*$ can only be estimated up to a translation, which makes the comparison between $\wh{\theta}_i$ obtained from $\mathcal{A}_i$ and $\wh{\theta}_j$ obtained from $\mathcal{A}_j$  meaningless. The second issue is that the set $\mathcal{A}_i$ is unknown, and we need a data-driven procedure to identify the close opponents of each player.

We propose an algorithm that first partitions the $n$ players into several leagues and then use local MLE to compare the skills of players within the same league. The league partition is data-driven, and serves as a surrogate for the local graphs $\mathcal{A}_i$'s. Moreover, for two players $i$ and $j$ in the same league, the MLEs of their skill parameters are computed using the same set of opponents, and thus $\wh{\theta}_i-\wh{\theta}_j$ is a well-defined estimator of $\theta_i^*-\theta_j^*$.

Another key idea we use in our proposed algorithm is that the estimation of $r^*$ is closely related to the estimation of the \emph{pairwise relation matrix} $R^*$ defined as
\begin{align}\label{eqn:pair_relation}
R^*_{ij} =\mathbb{I}\{r_i^* < r^*_j\}\quad \text{for all } 1\leq i\neq j\leq n.
\end{align}
For any estimator of $R^*$, it can be converted into an estimator of the rank vector $r^*$ according to Lemma \ref{lem:anderson-ineq}. As a result, we shall focus on constructing a good estimator for all the pairwise relations $\{\mathbb{I}\{r_i^* < r^*_j\}\}_{i<j}$.

This divide-and-conquer algorithm, which will be described in Section \ref{sec:BTL-algo}, resembles typical strategies adopted in professional sports such as European football leagues. It is computationally efficient and we will show the algorithm achieves the minimax rate of  full ranking.

\subsection{Details of The Proposed Algorithm}\label{sec:BTL-algo}

We first decompose the set $[L]$ by $\{1,\cdots,L_1\}$ and $\{L_1+1,\cdots,L\}$. Games in the first set are used as preliminary games for league partition, and games in the second set are used for computing the MLE. Under the condition $\frac{L}{\log n}\rightarrow\infty$, we can set the number $L_1$ as $L_1=\ceil{\sqrt{L\log n}}$. Define
$$\bar{y}_{ij}^{(1)}=\frac{1}{L_1}\sum_{l=1}^{L_1}y_{ijl}\quad\text{and}\quad\bar{y}_{ij}^{(2)}=\frac{1}{L-L_1}\sum_{l=L_1+1}^Ly_{ijl}$$
as the summary statistics in $\{1,\cdots,L_1\}$ and $\{L_1+1,\cdots,L\}$, respectively. 

The proposed algorithm consists of four steps, which we describe in detail below before presenting the whole procedure in Algorithm \ref{alg:whole}.

\paragraph{Step 1: League Partition.} For each $i\in[n]$, we define
\begin{equation}
w_i^{(1)}=\sum_{j\in[n]}A_{ij}\mathbb{I}\{\bar{y}_{ij}^{(1)}\leq\psi(-2M)\}, \label{eq:def-w1}
\end{equation}
where $M$ is some sufficiently large constant. The indicator $\mathbb{I}\{\bar{y}_{ij}^{(1)}\leq\psi(-2M)\}$ describes the event that Player $i$ is completely dominated by Player $j$ in the preliminary games. The quantity $w_i^{(1)}$ then counts the number of players who have dominated Player $i$. If $w_i^{(1)}$ is sufficiently small, Player $i$ should belong to the top league since only few or no players could dominate Player $i$. Indeed, the first league is defined by
\begin{equation}
S_1=\left\{i\in[n]: w_i^{(1)}\leq h\right\}, \label{eq:def-S1}
\end{equation}
where $h$ is chosen as $h=\frac{pM}{\beta}$. A data-driven $h$ will be described in the Section \ref{sec:h}. Similarly, $w_i^{(2)}$ and the second league $S_2$ can be defined by replacing $[n]$ with $[n]\backslash\{S_1\}$ in (\ref{eq:def-w1}) and (\ref{eq:def-S1}). Sequentially, we compute $w_i^{(k+1)}$ and $S_{k+1}$ based on players in $[n]\backslash\left(S_1\cup\cdots\cup S_k\right)$ for all $k\geq 1$.
This procedure will terminate as soon as the number of the players who are yet to be classified is small enough, at which point all of the remaining players will be grouped together into the last league. The entire procedure of league partition is described in Algorithm \ref{alg:partition}.
\begin{algorithm}
\DontPrintSemicolon
\SetKwInOut{Input}{Input}\SetKwInOut{Output}{Output}
\Input{$\{A_{ij}\bar y_{ij}^{(1)}\}_{1\leq i<j\leq n}$ and $\{A_{ij}\}_{1\leq i<j\leq n}$; $M$ and $h$}
\Output{A partition of $[n]$: $S_1,\cdots, S_K$ such that $[n]=\uplus_{k=1}^KS_k$} 
\nl For $i$ in $[n]$, compute $w_i^{(1)}\leftarrow\sum_{j\in[n]}A_{ij}\mathbb{I}\{\bar{y}_{ij}^{(1)}\leq\psi(-2M)\}$.\;
Set $S_1\leftarrow\left\{i\in[n]: w_i^{(1)}\leq h\right\}$ and $k=1$.\;

\nl While $n-\left(|S_1|+\cdots+|S_k|\right)>|S_k|/2$,\;
\qquad For each $i\in[n]\backslash\left(S_1\cup\cdots\cup S_k\right)$, \;
\qquad\qquad compute $w_i^{(k+1)}\leftarrow\sum_{j\in[n]\backslash\left(S_1\cup\cdots\cup S_k\right)}A_{ij}\mathbb{I}\{\bar{y}_{ij}^{(1)}\leq\psi(-2M)\}$. \;
\qquad Set $S_{k+1}\leftarrow\left\{i\in[n]\backslash\left(S_1\cup\cdots\cup S_k\right): w_i^{(k+1)}\leq h\right\}$ and $k\leftarrow k+1$. \;

\nl Set $K\leftarrow k-1$ and $S_K\leftarrow S_K \cup\left([n]\backslash\left(S_1\cup\cdots\cup S_{K-1}\right)\right)$.
\caption{A league partition algorithm}
\label{alg:partition}
\end{algorithm}

\paragraph{Step 2: Local MLEs and Within-League Pairwise Relation Estimation.} Having obtained the league partition $S_1,\cdots,S_K$, we need to compare players in the same league in the next step. Given the ambiguity between neighboring leagues, we shall also compare players if the leagues they belong to are next to each other. Therefore, for each $k\in[K-1]$, we need to compute the MLE for $\{\theta_{r_i^*}^*\}_{i\in S_k\cup S_{k+1}}$. This leads to the comparison between any two players in $S_k\cup S_{k+1}$.
Define
\begin{equation}
\mathcal{E}=\left\{(i,j):1\leq i<j\leq n, \psi(-M)\leq \bar{y}_{ij}^{(1)}\leq \psi(M)\right\}. \label{eq:close-edges}
\end{equation}
For each $k\in[K-1]$, the local negative log likelihood function is given by
\begin{equation}
\ell^{(k)}(\theta)=\sum_{\substack{(i,j)\in\mathcal{E}\\i,j\in S_{k-1}\cup S_k\cup S_{k+1}\cup S_{k+2}}}A_{ij}\left[\bar{y}_{ij}^{(2)}\log\frac{1}{\psi(\theta_i-\theta_j)}+(1-\bar{y}_{ij}^{(2)})\log\frac{1}{1-\psi(\theta_i-\theta_j)}\right].\label{eq:league-loss}
\end{equation}
When $k=1$ or $k=K-1$, we use the notation $S_0=S_{K+1}=\varnothing$.
Note that the negative log likelihood function is only defined for edges in $\mathcal{E}$. In other words, only games between close opponents are considered. Moreover, some of the top players in $S_k$ may have close opponents in the previous league $S_{k-1}$, and some of the bottom players in $S_{k+1}$ may have close opponents in the next league $S_{k+2}$. The likelihood should include these games as well for optimal inference of the parameters $\{\theta_{r_i^*}^*\}_{i\in S_k\cup S_{k+1}}$. The MLE is defined by
\begin{equation}
\wh{\theta}^{(k)}\in \argmin\ell^{(k)}(\theta), \label{eq:league-MLE}
\end{equation}
which is any vector that minimizes $\ell^{(k)}(\theta)$. Then, for any $i\in S_k$ and any $j\in S_k\cup S_{k+1}$, set
$$R_{ij}=\mathbb{I}\{\wh{\theta}_i^{(k)}>\wh{\theta}_j^{(k)}\}.$$
Note that $\{\wh{\theta}_i^{(k)}\}_{i\in S_k\cup S_{k+1}}$ is defined only up to a common translation, but even with such ambiguity, the comparison indicator $R_{ij}$ is uniquely defined.

We also remark that the computation of the MLE (\ref{eq:league-MLE}) is a straightforward convex optimization. It can be shown that the Hessian matrix of the objective function is well conditioned (Lemma \ref{lem:bound-hessian}), and thus a standard gradient descent algorithm converges to the optimum with a linear rate \citep{chen2019spectral,chen2020partial}.

\paragraph{Step 3: Cross-League Pairwise Relation Estimation.} Consider $i$ and $j$ that belong to $S_k$ and $S_l$ respectively with $|k-l|\geq 2$. This is a pair of players that are separated by at least an entire league between them. For all such pairs, we set
$$R_{ij}=\mathbb{I}\{k<l\}.$$
Combined with the entries that are computed in Step 2, all upper triangular entries of the matrix $R$ have been filled. The remaining entries of $R$ can be filled according to the rule $R_{ij}+R_{ji}=1$.

~\\
\indent  Step 2 and Step 3 together serve the purpose of estimating the pairwise relation matrix $R^*$ defined in (\ref{eqn:pair_relation}). Illustrated in Figure \ref{fig:relation}, the matrix $R^*$ can be decomposed into blocks $\{R^*_{S_k \times S_l}\}_{k<l}$ according to the league partition $\{S_k\}_{k\in[K]}$. The yellow blocks close to the diagonal are estimated by the procedure described in Step 2. In Figure \ref{fig:relation}, the data used in the two local MLEs ($k=1$ and $k=4$) are marked by different patterns for illustration. For example, when $k=4$, we obtain estimators for $R^*_{S_4 \times S_4}$ and $R^*_{S_4 \times S_5}$ based on the local MLE that involves observations from $\{(i,j)\in \mathcal{E}:i,j\in S_3\cup S_4 \cup S_5 \cup S_6\}$. The blue blocks are away from the diagonal and are estimated in Step 3. The remaining blocks in the lower triangular part are estimated according to $R_{ij}+R_{ji}=1$.

\begin{figure}[h]
	\centering
	\includegraphics[width=0.6\textwidth]{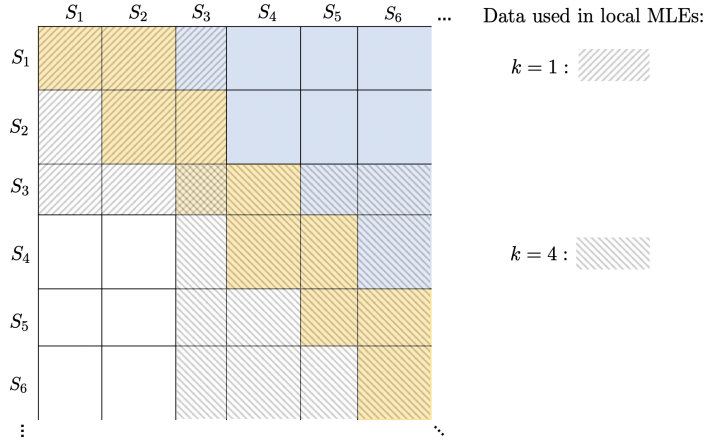}
	\caption{\textsl{Illustration of Step 2 and Step 3.}}
	\label{fig:relation}
\end{figure}

\paragraph{Step 4: Full Rank Estimation.} In the last step, we convert the pairwise relations estimator $R$ into a rank estimator. First, compute the score for the $i$th player by
$$s_i=\sum_{j\in[n]\backslash\{i\}}R_{ij}.$$
Then, the rank estimator $\wh{r}$ is obtained by sorting the scores $\{s_i\}_{i\in[n]}$. 

The whole procedure of  full ranking is summarized as Algorithm \ref{alg:whole}.
\begin{algorithm}
\DontPrintSemicolon
\SetKwInOut{Input}{Input}\SetKwInOut{Output}{Output}
\Input{$\{A_{ij}\bar y_{ij}^{(1)}\}_{1\leq i<j\leq n}$, $\{A_{ij}\bar y_{ij}^{(2)}\}_{1\leq i<j\leq n}$ and $\{A_{ij}\}_{1\leq i<j\leq n}$; $M$ and $h$}
\Output{A rank vector $\wh{r}\in\S_n$} 
\nl Run Algorithm \ref{alg:partition} and obtain the partition $[n]=\uplus_{k=1}^K S_k$.\;
Set $S_0=S_{K+1}=\varnothing$. \;
\nl For $k\in[K-1]$, \;
\qquad compute the local MLE $\wh{\theta}^{(k)}$ according to (\ref{eq:league-MLE}). \;
\qquad For $i\in S_k$ and $j\in S_k\cup S_{k+1}$, \; 
\qquad\qquad set $R_{ij}\leftarrow\mathbb{I}\{\wh{\theta}_i^{(k)}>\wh{\theta}_j^{(k)}\}$. \;
\nl For $k\in[K-2]$ and $l\in[k+2:K]$, \;
\qquad For $(i,j)\in S_k\times S_l$, \;
\qquad\qquad set $R_{ij}\leftarrow 1$. \;
For $i\in[n]$ and $j\in[i+1:n]$, \;
\qquad set $R_{ji}\leftarrow 1-R_{ij}$. \;
\nl For $i\in[n]$, \;
\qquad compute $s_i\leftarrow \sum_{j\in[n]\backslash\{i\}}R_{ij}$. \;
Sort $\{s_i\}_{i\in[n]}$ from high to low and obtain a full rank vector $\wh{r}$.
\caption{A divide-and-conquer full ranking algorithm}
\label{alg:whole}
\end{algorithm}

\subsection{Statistical Properties of Each Step}\label{sec:analysis}

The purpose of this section is to prove the upper bound result of Theorem \ref{thm:BTL-minimax} by analyzing the statistical properties of Algorithm \ref{alg:whole}. The four components of the  algorithm will be analyzed separately. We will first analyze Step 4 in Section \ref{sec:analysis_step4}, then Step 1 in Section \ref{sec:analysis_step1},  followed by Step 3 in Section \ref{sec:analysis_step3} and finally Step 2 in Section \ref{sec:analysis_step2}. 
The results of these individual components will be combined to derive the minimax optimality of Algorithm \ref{alg:whole}, presented in Section \ref{sec:whole}.

\subsubsection{From Pairwise Relations to Full Ranking (Step 4).} \label{sec:analysis_step4}
We first establish a result that clarifies the role of Step 4 of Algorithm \ref{alg:whole}. Consider any matrix $R\in\{0,1\}^{n\times n}$ that satisfies $R_{ij}+R_{ji}=1$ for any $i\neq j$. Let $\wh{r}$ be the rank vector obtained by sorting $\{\sum_{j\in[n]\backslash\{i\}}R_{ij}\}_{i\in[n]}$ from high to low. The error of $\wh{r}$ is controlled by the following lemma.

\begin{lemma}\label{lem:anderson-ineq}
For any $r^*\in\S_n$, define its pairwise relation matrix $R^*$  such that $R_{ij}^*=\mathbb{I}\{r_i^*<r_j^*\}$. Then, we have
$$\k(\wh{r},r^*)\leq \frac{4}{n}\sum_{1\leq i\neq j\leq n}\mathbb{I}\{R_{ij}\neq R_{ij}^*\}.$$
\end{lemma}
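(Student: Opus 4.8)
The plan is to bound the Kendall's tau distance by the $\ell_1$ error via \eqref{eq:FandK}, and then bound $\F(\wh r,r^*)$ by counting, for each player $i$, how far its rank can move when we sort by the scores $s_i=\sum_{j\neq i}R_{ij}$. Without loss of generality assume $r_i^*=i$. Define the ``true score'' $s_i^* = \sum_{j\neq i}R_{ij}^* = \#\{j: r_i^*<r_j^*\} = n - i$, so sorting the $s_i^*$ from high to low recovers $r^*$ exactly. Let $e_i = \sum_{j\neq i}\mathbb{I}\{R_{ij}\neq R_{ij}^*\}$ be the number of mistakes in row $i$; then $|s_i - s_i^*|\le e_i$.

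The key combinatorial step is: if I sort $\{s_i\}$ and get $\wh r$, then $|\wh r_i - r_i^*| = |\wh r_i - (n-\text{(position of }s_i^*\text{)})|$ is controlled by the number of indices $j$ whose score interval ``overlaps'' that of $i$ in the wrong order. Concretely, $\wh r_i > \wh r_j$ for $i<j$ (a genuine inversion) can only happen if $s_i \le s_j$, which forces $s_i^* - s_j^* \le (s_j - s_j^*) + (s_i^* - s_i) \le e_i + e_j$, i.e. $j - i \le e_i + e_j$. Summing the footrule: for each $i$, $|\wh r_i - r_i^*|$ is at most the number of $j$ with $|j-i| \le e_i + e_j$ placed on the wrong side, which I can crudely bound by $\sum_{j:\ |j-i|\le e_i+e_j} 1$. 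A cleaner route, which I expect to use, is to bound $n\,\k(\wh r,r^*) \le \sum_{i<j}\mathbb{I}\{\wh r_i > \wh r_j\} \le \sum_{i<j}\mathbb{I}\{j-i\le e_i+e_j\} \le \sum_{i<j}\mathbb{I}\{e_i \ge (j-i)/2\} + \mathbb{I}\{e_j\ge(j-i)/2\}$, and then for fixed $i$, $\sum_{j}\mathbb{I}\{e_i\ge |j-i|/2\}\le 2e_i+1 \le 3e_i$ whenever $e_i\ge 1$ (and the term is $0$ when $e_i=0$), giving $n\,\k(\wh r,r^*)\lesssim \sum_i e_i = \sum_{i\neq j}\mathbb{I}\{R_{ij}\neq R_{ij}^*\}$, with a constant one can pin down to $4$.

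To get the precise constant $4$, I would instead directly invoke the footrule bound $\k \le \F$ together with a tie-aware sorting argument: after sorting scores (breaking ties arbitrarily), $|\wh r_i - r_i^*| \le |\{j\neq i : (s_j - s_i)(r^*_j - r^*_i) \le 0\}|$, and each such ``bad'' $j$ with $r_j^* > r_i^*$ satisfies $s_j \le s_i$, hence $r_j^* - r_i^* = s_i^* - s_j^* \le e_i + e_j$; a symmetric statement holds for $r_j^* < r_i^*$. Pairing each bad $j$ with either the event $\{e_i \ge |r_i^*-r_j^*|/2\}$ or $\{e_j \ge |r_i^*-r_j^*|/2\}$ and summing over $i$ (using that for fixed $i$ the ranks $r_j^*$ are distinct integers, so $|\{j: e_i \ge |r_i^*-r_j^*|/2\}| \le 2e_i$) yields $\sum_i |\wh r_i - r_i^*| \le 2\sum_i e_i + 2\sum_j e_j = 4\sum_{i\neq j}\mathbb{I}\{R_{ij}\neq R_{ij}^*\}$, and dividing by $n$ and using $\k\le\F$ gives the claim.

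The main obstacle is the bookkeeping in the tie-breaking / sorting step: when several scores coincide, $\wh r$ is not uniquely determined by the $s_i$ alone, so I must argue the bound holds for \emph{any} valid sort, which is why the argument is phrased through ``$(s_j-s_i)(r_j^*-r_i^*)\le 0$'' (non-strict) rather than strict inequalities. The rest is elementary counting; the only quantitative input is the trivial observation $|s_i - s_i^*|\le e_i$ and the fact that for fixed $i$ the true ranks of the other players are distinct, so at most $2t+1$ of them lie within distance $t$ of $r_i^*$.
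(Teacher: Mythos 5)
Your first paragraph contains essentially the paper's argument: bound the number of inversions by noting that $\wh r_i>\wh r_j$ with $r_i^*<r_j^*$ forces $s_i\le s_j$, hence $r_j^*-r_i^*=s_i^*-s_j^*=(s_i^*-s_i)+(s_i-s_j)+(s_j-s_j^*)\le e_i+e_j$, then split this into $\mathbb{I}\{e_i\ge(j-i)/2\}+\mathbb{I}\{e_j\ge(j-i)/2\}$ and count. To get the constant $4$ cleanly, just observe that the sum $\sum_{i<j}$ is one-sided: for fixed $i$, $\sum_{j>i}\mathbb{I}\{j-i\le 2e_i\}\le 2e_i$ (using that $e_i$ is an integer), and the symmetric sum over $j$ gives another $\sum_j 2e_j$, totalling $4\sum_i e_i$. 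Your interim bound ``$\sum_j\mathbb{I}\{e_i\ge|j-i|/2\}\le 2e_i+1$'' is not quite right as written (over $j\neq i$ the correct two-sided count is $4e_i$), but since the sum you actually need is one-sided, this does not cause trouble if you keep track.

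The second paragraph, which you present as the ``cleaner route,'' has a sign error that breaks it. Sorting by scores from high to low means higher $s$ gets lower rank, so the pairs that can displace $i$ are those where the \emph{wrong} order holds, i.e.\ $(s_j-s_i)(r_j^*-r_i^*)\ge 0$, not $\le 0$. Your own subsequent step betrays this: from ``$r_j^*>r_i^*$ and $s_j\le s_i$'' you cannot conclude $s_i^*-s_j^*\le e_i+e_j$; that chain needs $s_i-s_j\le 0$, i.e.\ $s_j\ge s_i$. With the corrected condition the combinatorial bound $|\wh r_i-r_i^*|\le|\{j\neq i:(s_j-s_i)(r_j^*-r_i^*)\ge 0\}|$ does hold (you should verify it; it requires a small argument handling ties), but then for fixed $i$ the set $\{j\neq i: e_i\ge|r_i^*-r_j^*|/2\}$ has size at most $4e_i$ (ranks on both sides of $r_i^*$), not $2e_i$, so the footrule route yields $\F\le\frac{8}{n}\sum_i e_i$ and hence only $\k\le\frac{8}{n}\sum_{i\ne j}\mathbb{I}\{R_{ij}\ne R_{ij}^*\}$. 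The factor $2$ is lost because the footrule double-counts: each bad pair $\{i,j\}$ is charged once in $|B_i|$ and once in $|B_j|$, whereas the direct inversion count charges each pair once. You should therefore abandon the footrule detour and finish via the inversion count from your first paragraph, which is the route the paper takes.
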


Lemma \ref{lem:anderson-ineq} is a deterministic inequality that bounds the error of the rank estimation by the estimation error of pairwise relations. It implies that to accurately rank $n$ players, it is sufficient to accurately estimate the pairwise relations between all pairs. 

\subsubsection{Statistical Properties of League Partition (Step 1).} \label{sec:analysis_step1}
The partition output by Algorithm \ref{alg:partition} satisfies several nice properties that are stated by the following theorem.

\begin{thm}\label{thm:alg-1}
Assume $\theta^*\in\Theta_n(\beta,C_0)$ for some constant $C_0\geq 1$, $\frac{L}{\log n}\rightarrow\infty$ and $\frac{p}{(\beta\vee n^{-1})\log n}\rightarrow\infty$. Let $\{S_k\}_{k\in[K]}$ be the output of Algorithm \ref{alg:partition} with $L_1=\ceil{\sqrt{L\log n}}$, $1\leq M=O(1)$ and $h=\frac{pM}{\beta}$. Then, there exist some constants $C_1,C_2,C_3>0$ only depending on $C_0$ such that the following conclusions hold with probability at least $1-O(n^{-9})$:
\begin{enumerate}
\item \emph{Boundedness:} For any $k\in[K]$ and any $i,j\in S_{k-1}\cup S_k\cup S_{k+1}$, we have $|\theta_{r_i^*}^*-\theta_{r_j^*}^*|\leq C_1M$. Recall the convention that $S_{0}=S_{K+1}=\varnothing$;
\item \emph{Inclusiveness:} For any $k\in[K]$ and any $i\in S_k$, we have $\left\{j\in[n]: |r_i^*-r_j^*|\leq \frac{C_2M}{\beta}\right\}\subset S_{k-1}\cup S_k\cup S_{k+1}$;
\item \emph{Separation:} For any $i\in S_k$ and $j\in S_{l}$ such that $l-k\geq 2$, we have $\theta^*_{r_i^*} > \theta^*_{r_j^*}$; 
\item \emph{Independence:} For any $k\in[K]$, we have $S_k=\check{S}_k$. Here, $\{\check{S}_k\}_{k\in[K]}$ is a partition that is measurable with respect to the $\sigma$-algebra generated by $\{(A_{ij},\bar{y}_{ij}^{(1)}): |\theta_{r_i^*}^*-\theta_{r_j^*}^*|>1.9M\}$;
\item \emph{Continuity: } For any $k\in[K-1]$ and any $i\in S_{k-1}\cup S_{k} \cup S_{k+1}\cup S_{k+2}$, we have $\abs{\left\{j\in[n]: |\theta^*_{r_i^*}-\theta^*_{r_j^*}|\leq \frac{M}{2}\right\} \cap (S_{k-1}\cup S_{k} \cup S_{k+1}\cup S_{k+2})} \geq C_3 \br{\frac{ M}{\beta}\wedge n}$. 
\end{enumerate}
\end{thm}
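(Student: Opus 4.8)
The plan is to freeze the two sources of randomness on a single high-probability event, then to show, by induction over the peeling rounds of Algorithm~\ref{alg:partition}, that its output partitions $[n]$ into consecutive rank-bands of width $\Theta(M/\beta\wedge n)$, and finally to read the five conclusions off this block structure. For the good event: since $L/\log n\to\infty$ we have $L_1=\ceil{\sqrt{L\log n}}$ with $L_1/\log n\to\infty$, so $\bar y^{(1)}_{ij}$ is an average of $L_1$ i.i.d.\ $\mathrm{Bernoulli}(\psi(\theta^*_{r^*_i}-\theta^*_{r^*_j}))$ variables, and Hoeffding plus a union bound over the $O(n^2)$ pairs give, with probability $1-O(n^{-9})$, that the domination indicator $\mathbb I\{\bar y^{(1)}_{ij}\le\psi(-2M)\}$ equals $\mathbb I\{\theta^*_{r^*_j}-\theta^*_{r^*_i}>2M\}$ for every pair whose skill gap differs from $2M$ by more than a small fixed constant; in particular it vanishes whenever $|\theta^*_{r^*_i}-\theta^*_{r^*_j}|\le 1.9M$ and equals $1$ whenever the gap is at least $2.1M$. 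Since the hypothesis $p/((\beta\vee n^{-1})\log n)\to\infty$ makes $h=pM/\beta$ exceed $\log n$ by a diverging factor, Bernstein plus a union bound over all players $i$ and all rank-intervals $I$ give, with probability $1-O(n^{-9})$, that $\bigl|\sum_{j:\,r^*_j\in I}A_{ij}-p|I|\bigr|\le\tfrac1{10}(p|I|\vee h)$ for all $i,I$. Let $\mathcal E_0$ be the intersection of these events, so $\Prob(\mathcal E_0)\ge 1-O(n^{-9})$; everything below is a deduction on $\mathcal E_0$.

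On $\mathcal E_0$, the regularity of $\Theta_n(\beta,C_0)$ lets one pass between a skill gap $\theta^*_{r^*_i}-\theta^*_{r^*_j}$ and a rank gap lying between its quotients by $C_0\beta$ and by $\beta$. For a player $i$ still present after round $k-1$, $w_i^{(k)}$ counts the remaining opponents dominating $i$, which on $\mathcal E_0$ (modulo the ambiguity window and the $\tfrac1{10}$ graph error) equals $p$ times the number of players in $R_{k-1}$ whose rank is at least $\Theta(M/\beta)$ smaller than $r^*_i$. I would prove, by induction on $k$, that there are (data-dependent) integers $0=\rho_0\le\rho_1\le\cdots\le\rho_K=n$ with $\rho_k-\rho_{k-1}=\Theta(M/\beta\wedge n)$ such that the set of players removed through round $k$ equals $\{i:r^*_i\le\rho_k\}$ up to a symmetric difference contained in a band of $O(\sqrt{h\log n}/p)$ ranks around $\rho_k$; consequently $S_k$ is the band $\{i:\rho_{k-1}<r^*_i\le\rho_k\}$ up to the same discrepancy, and the termination rule $|R_k|\le|S_k|/2$ makes $S_K$ one more band of the same order. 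The inductive step compares $w_i^{(k)}$---monotone decreasing in $k$, by the shrinking of $R_{k-1}$---with the fixed threshold $h$; because $\rho_k$ grows essentially linearly in $k$, the round in which a player is removed is determined by its rank up to $\pm1$, and this is precisely why the $O(\sqrt{h\log n}/p)$ per-round discrepancy does not accumulate over the (up to $\Theta(\beta n/M)$) rounds.

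Granting this block structure, the five conclusions follow quickly, using the regularity of $\Theta_n(\beta,C_0)$ throughout. \emph{Boundedness}: if $i,j\in S_{k-1}\cup S_k\cup S_{k+1}$ their ranks lie in a band of width $\rho_{k+1}-\rho_{k-2}+o(M/\beta)=O(M/\beta\wedge n)$, so $|\theta^*_{r^*_i}-\theta^*_{r^*_j}|\le C_0\beta\cdot O(M/\beta)=C_1M$. \emph{Inclusiveness}: each band has width at least $c_1(M/\beta\wedge n)$, so for $C_2$ small enough relative to $c_1$ the radius-$(C_2M/\beta)$ rank neighbourhood of any $i\in S_k$ lies in $S_{k-1}\cup S_k\cup S_{k+1}$. \emph{Separation}: the bands increase in $k$ and consecutive bands abut up to $o(M/\beta)$, so $l-k\ge2$ forces $r^*_i<r^*_j$ and hence $\theta^*_{r^*_i}>\theta^*_{r^*_j}$, strictly since distinct ranks differ by at least $\beta$ in skill. \emph{Continuity}: $\{j:|\theta^*_{r^*_i}-\theta^*_{r^*_j}|\le M/2\}$ contains the radius-$\lfloor M/(2C_0\beta)\rfloor$ rank neighbourhood of $r^*_i$, whose intersection with the four bands $S_{k-1}\cup\cdots\cup S_{k+2}$---which span $\Theta(M/\beta)$ ranks straddling $r^*_i$---still has at least $C_3(M/\beta\wedge n)$ elements, the $\wedge n$ coming from truncation near rank $1$ or $n$ (the boundary leagues are handled with $S_0=S_{K+1}=\varnothing$). \emph{Independence}: define $\check w_i^{(k)}$ by inserting the deterministic factor $\mathbb I\{|\theta^*_{r^*_i}-\theta^*_{r^*_j}|>1.9M\}$ into $w_i^{(k)}$, and let $\{\check S_k\}$ be the partition Algorithm~\ref{alg:partition} outputs from these censored statistics; by induction on $k$ the set of not-yet-removed players is a measurable function of $\{(A_{ij},\bar y^{(1)}_{ij}):|\theta^*_{r^*_i}-\theta^*_{r^*_j}|>1.9M\}$, hence so are $\check w^{(k)}$ and $\check S_k$; and on $\mathcal E_0$ the original indicator already vanishes when $|\theta^*_{r^*_i}-\theta^*_{r^*_j}|\le1.9M$, so $w_i^{(k)}=\check w_i^{(k)}$ along the whole run and therefore $S_k=\check S_k$.

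The genuinely delicate step is the non-accumulation in the block-structure induction. A naive bracketing of $R_k$ between two rank-suffixes yields an uncertainty window that grows by $\Theta(M/\beta)$ per round---partly the slack between converting gaps through $\beta$ versus through $C_0\beta$, partly the stochastic fluctuation---which would destroy \emph{Boundedness} for late leagues. The fix is the observation that a player removed in round $k$ would, in the noiseless process, be removed in round $k$ or $k+1$, so its error is absorbed by the very next round instead of persisting; combined with $h/\log n\to\infty$ (each round's fluctuation is $o(M/\beta\wedge n)$) and the fact that all five conclusions involve only $O(1)$ consecutive leagues, every error that enters a conclusion stays $o(M/\beta\wedge n)$. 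Turning this picture into a clean induction while keeping all union bounds at $O(n^{-9})$ is the technical core of the argument.
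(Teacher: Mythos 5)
Your high-level architecture matches the paper's: fix a good event on which the domination indicator is deterministic for skill gaps bounded away from $2M$ and degrees concentrate, then run an induction showing the leagues are consecutive rank bands, and establish independence by observing that the whole partition is a function of only the ``far-pair'' data. Your censoring trick for independence (inserting the factor $\mathbb I\{|\theta^*_{r_i^*}-\theta^*_{r_j^*}|>1.9M\}$ and running the algorithm on the censored $\check w_i^{(k)}$) is in fact a slightly cleaner packaging of the paper's construction, which instead restricts the sum in $w_i^{(k)\prime}$ to $\underline{S_k}\cap\overline{\mathcal E_{1,i}}$; both achieve far-pair measurability by induction and coincide with the true $w_i^{(k)}$ on the good event.

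The genuine gap is the non-accumulation step, which you flag yourself as ``the technical core'' and do not establish. The heuristic you offer --- ``a player removed in round $k$ would, in the noiseless process, be removed in round $k$ or $k+1$, so its error is absorbed by the very next round'' --- is not the mechanism that actually prevents accumulation. The paper controls the per-round discrepancy $\overline{u^{(k)}}-\underline{u^{(k)}}$ by a \emph{uniform} deterministic bound $\le 0.29M/(C_0\beta)$ that is independent of $k$, and the proof of that bound has nothing to do with a dynamical absorption: it relies on (i) the regularity $\theta^*\in\Theta_n(\beta,C_0)$ to convert skill gaps into rank gaps with bounded distortion, (ii) the deterministic threshold slack between $(1-0.12/C_0^2)^{-1}$ and $(1+0.11/C_0^2)^{-1}$ applied to the look-ahead count $\overline{t_i^{(k)}}$, and (iii) a lower bound of the form $\overline{t_j^{(k)}}-\overline{t_i^{(k)}}\ge (r_j^*-r_i^*)/C_0$ for players near the current cutoff, which itself requires first establishing the relation $\theta^*_{\overline{u^{(k-1)}}}\ge\theta^*_{\underline{u^{(k)}}}+2M-\delta_1$. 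These are coupled inductively with the sandwich $\underline{S_k}\subset S_k\subset\overline{S_k}$, the cardinality bounds $|\underline{S_k}|\gtrsim M/\beta$ and $|\overline{S_k}|\lesssim M/\beta$, and the identity $S_k=S_k'$, in a delicate mutual induction. Your per-round fluctuation of $O(\sqrt{h\log n}/p)$ also only handles the stochastic contribution; it does not address the distortion coming from the $[1,C_0]$ range in the definition of $\Theta_n(\beta,C_0)$, which is what the paper's $0.11/C_0^2$--$0.12/C_0^2$ constants are engineered around. Without these quantitative relations the Boundedness and Continuity conclusions are not reachable, so the proposal stops short of a proof.

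Two smaller omissions: you do not treat the case $\beta\lesssim n^{-1}$, where the whole population collapses into a single league $S_1=[n]$ (the paper dispatches this separately), and you do not verify that your good event $\mathcal E_0$ (a uniform Bernstein bound over all rank intervals) actually suffices to control the degree sums appearing in $w_i^{(k)}$, which are over the data-dependent sets $\underline{S_k}$; this works only after you have already shown those sets are rank intervals, so the ordering of the inductive claims matters and needs to be spelled out.
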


We give some remarks on each conclusion of Theorem \ref{thm:alg-1}. The first conclusion asserts that the skill parameters of players from the neighboring leagues are close to each other. This property is complemented by the second conclusion that the close opponents of each player are either from the same league, the previous league, or the next league. In other words, for any $k\in[K]$ and any $i\in S_k$, the local graph $\{A_{jk}: j,k\in S_{k-1}\cup S_k\cup S_{k+1}\}$ can be viewed as a data-driven surrogate of $\mathcal{A}_i$ defined in (\ref{eq:nei-i}). Moreover, the second conclusion also implies that $|S_{k-1}\cup S_k\cup S_{k+1}|\gtrsim \frac{1}{\beta}\wedge n$, from which we can deduce the bound $K=O\left(n\beta\vee 1\right)$ that controls the number of iterations Algorithm \ref{alg:partition} needs before it is terminated.\footnote{We can in fact prove a stronger result that $\frac{1}{\beta}\wedge n\lesssim |S_k|\lesssim \frac{1}{\beta}\wedge n$ uniformly for all $k\in[K]$ with probability at least $1-O(n^{-9})$.} 
Conclusion 3 implies that the partition $\{S_k\}_{k\in[K]}$ is roughly correlated with the true rank in the sense that it correctly identifies the comparisons between players who do not belong to neighboring leagues.
Conclusion 4 shows that almost all of the randomness of the partition is from that of $\{(A_{ij},\bar{y}_{ij}^{(1)}): \theta_{r_i^*}^*-\theta_{r_j^*}^*\leq-1.9M\}$. This fact leads to a crucial independence property in the later analysis of the local MLE. Conclusions 1, 2, 4, and 5 are crucial in the analysis of Step 2 in Section \ref{sec:analysis_step2}, while Conclusion 3 will be used in the analysis of Step 3 in Section \ref{sec:analysis_step3}. 

The proof of Theorem \ref{thm:alg-1} is a delicate mathematical induction argument that iteratively explores the asymptotic independence between consecutive constructions of leagues. To be specific, the random variable
$$w_i^{(k+1)}=\sum_{j\in[n]\backslash\left(S_1\cup\cdots\cup S_k\right)}A_{ij}\mathbb{I}\{\bar{y}_{ij}^{(1)}\leq\psi(-2M)\}$$
can be sandwiched between $\underline{w}_i^{(k+1)}$ and $\overline{w}_i^{(k+1)}$. We show that both $\underline{w}_i^{(k+1)}$ and $\overline{w}_i^{(k+1)}$, when conditioning on the previous leagues $S_1,\cdots,S_k$, approximately follow Binomial distributions. Essentially, the $A_{ij}$'s that contribute to the summation of $w_i^{(k+1)}$ are disjoint from the $A_{ij}$'s that lead to the constructions of $S_1,\cdots,S_k$, which then implies an asymptotic independence property between $\big(\underline{w}_i^{(k+1)},\overline{w}_i^{(k+1)}\big)$ and $S_1,\cdots,S_k$.
\begin{figure}[h]
	\centering
	\includegraphics[width=0.6\textwidth]{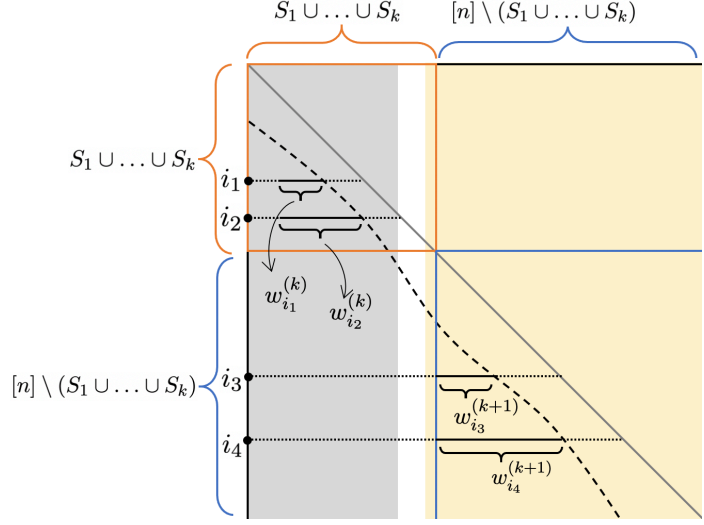}
	\caption{\textsl{Illustration of the independence property of Algorithm \ref{alg:partition}.}}
	\label{fig:thm41}
\end{figure}
This phenomenon is illustrated in Figure \ref{fig:thm41}. In the picture, we use the orange block to denote $S_1\cup\cdots\cup S_k$, the set that has already been partitioned. The next step of the algorithm is to construct the $(k+1)$th league from $[n]\backslash (S_1\cup\cdots\cup S_k)$, which is the blue block. From the positions of $w_i^{(k+1)}$'s, we observe that the construction of $S_{k+1}$ depends on $A_{ij}$'s that are in the yellow area. On the other hand, since the area on the left hand side of the dashed curve satisfies $\bar{y}_{ij}^{(1)}\leq\psi(-2M)$, the construction of the first $k$ leagues only depends on $A_{ij}$'s that are in the grey area. The independence property can be easily seen from the separation between the grey and the yellow areas. A rigorous proof of Theorem \ref{thm:alg-1}, which is based on this argument, will be given in Section \ref{sec:pf-alg-1}.

\subsubsection{Statistical Properties of Cross-League Estimation (Step 3).}  \label{sec:analysis_step3}
 The analysis of Step 3 is quite straightforward following the results from the league partition. Assume the Conclusion 3 of Theorem \ref{thm:alg-1} holds. Then for any $i\in S_k$ and $j\in S_{l}$ such that $l-k\geq 2$, we have $R^*_{ij}=1$. Since $R_{ij}=1$ for all such pairs, we have $\sum_{k\in[K-2]}\sum_{ l\in[k+2:K]} \indc{R_{ij} \neq R^*_{ij}, i\in S_k, j\in S_l}=0$.

\subsubsection{Statistical Properties of Local MLEs (Step 2).} \label{sec:analysis_step2}

The main challenge of analyzing the local MLE is the dependence between the partition $\{S_k\}_{k\in[K]}$ and the likelihood (\ref{eq:league-loss}).
We are going to use Conclusion 4 of Theorem \ref{thm:alg-1} to resolve this issue. Define
$$\check{A}_{ij}=A_{ij}\mathbb{I}\{|\theta_{r_i^*}^*-\theta_{r_j^*}^*|\leq M/2\}+A_{ij}\mathbb{I}\left\{(i,j)\in\mathcal{E}, M/2<|\theta_{r_i^*}^*-\theta_{r_j^*}^*|< 1.1 M\right\},$$
and
$$\check{\ell}^{(k)}(\theta)=\sum_{i,j\in \check{S}_{k-1}\cup \check{S}_k\cup \check{S}_{k+1}\cup \check{S}_{k+2}}\check{A}_{ij}\left[\bar{y}_{ij}^{(2)}\log\frac{1}{\psi(\theta_i-\theta_j)}+(1-\bar{y}_{ij}^{(2)})\frac{1}{1-\psi(\theta_i-\theta_j)}\right].$$
The maximizer of $\check{\ell}^{(k)}(\theta)$ is denoted by
\begin{equation}
\check{\theta}^{(k)}\in \argmin\check{\ell}^{(k)}(\theta).\label{eq:MLE-check}
\end{equation}
The introduction of $\check{\ell}^{(k)}(\theta)$ and $\check{\theta}^{(k)}$ is to disentangle the dependence of the MLE on the league partition. By Theorem \ref{thm:alg-1}, we know that $S_k=\check{S}_k$ for all $k\in[K]$. The concentration of $\{\bar{y}_{ij}^{(1)}\}$ implies that $\{|\theta_{r_i^*}^*-\theta_{r_j^*}^*|\leq M/2\}\subset\{(i,j)\in\mathcal{E}\}\subset\{|\theta_{r_i^*}^*-\theta_{r_j^*}^*|\leq 1.1M\}$ for all $1\leq i<j\leq n$. Therefore, we have
$$\mathbb{I}\left\{(i,j)\in\mathcal{E}\right\}=\mathbb{I}\{|\theta_{r_i^*}^*-\theta_{r_j^*}^*|\leq M/2\}+\mathbb{I}\left\{(i,j)\in\mathcal{E}, M/2<|\theta_{r_i^*}^*-\theta_{r_j^*}^*|< 1.1 M\right\}.$$
We can thus conclude that $\ell^{(k)}(\theta)=\check{\ell}^{(k)}(\theta)$ for all $\theta$ with high probability. The result is formally stated below.

\begin{lemma}\label{lem:MLE-check}
Assume $\theta^*\in\Theta_n(\beta,C_0)$ for some constant $C_0\geq 1$, $\frac{L}{\log n}\rightarrow\infty$ and $\frac{p}{(\beta\vee n^{-1})\log n}\rightarrow\infty$. Let $\{S_k\}_{k\in[K]}$ be the output of Algorithm \ref{alg:partition} with $L_1=\ceil{\sqrt{L\log n}}$, $1\leq M=O(1)$ and $h=\frac{pM}{\beta}$. Then, with probability at least $1-O(n^{-8})$, we have $\ell^{(k)}(\theta)=\check{\ell}^{(k)}(\theta)$ for all $\theta$ and for all $k\in[K]$. As a consequence $\{\wh{\theta}_i^{(k)}\}_{i\in S_k\cup S_{k+1}}$ and $\{\check{\theta}_i^{(k)}\}_{i\in S_k\cup S_{k+1}}$ are equivalent up to a common shift.
\end{lemma}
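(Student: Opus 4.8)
The plan is to deduce Lemma~\ref{lem:MLE-check} from Conclusion~4 of Theorem~\ref{thm:alg-1} together with a uniform concentration bound on the preliminary summary statistics $\bar{y}_{ij}^{(1)}$. The first step is to invoke Theorem~\ref{thm:alg-1}, which already gives, on an event of probability at least $1-O(n^{-9})$, that $S_k=\check{S}_k$ for every $k\in[K]$. Thus the index sets appearing in $\ell^{(k)}$ and $\check{\ell}^{(k)}$ agree, and it remains only to match the edge sets and the weights, i.e.\ to show that for every pair $1\le i<j\le n$,
$$\indc{(i,j)\in\mathcal{E}}A_{ij}=\check{A}_{ij}\quad\text{and more precisely}\quad \indc{i,j\in S_{k-1}\cup\cdots\cup S_{k+2}}\,A_{ij}\indc{(i,j)\in\mathcal{E}}=\indc{i,j\in\check{S}_{k-1}\cup\cdots\cup\check{S}_{k+2}}\check{A}_{ij}.$$

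The key analytic input is a Chernoff/Hoeffding bound: since $\bar{y}_{ij}^{(1)}$ is an average of $L_1=\ceil{\sqrt{L\log n}}$ i.i.d.\ Bernoulli$(\psi(\theta^*_{r_i^*}-\theta^*_{r_j^*}))$ variables and $\frac{L}{\log n}\to\infty$ forces $L_1\to\infty$, a union bound over the at most $\binom{n}{2}$ pairs shows that with probability at least $1-O(n^{-8})$, simultaneously for all $(i,j)$ one has $|\bar{y}_{ij}^{(1)}-\psi(\theta^*_{r_i^*}-\theta^*_{r_j^*})|$ smaller than any fixed constant gap. Choosing that gap small relative to the separations among $\psi(M/2),\psi(M),\psi(1.1M)$ (all fixed constants because $M=O(1)$ and $M\ge1$), this yields the two-sided sandwich
$$\{|\theta^*_{r_i^*}-\theta^*_{r_j^*}|\le M/2\}\subset\{(i,j)\in\mathcal{E}\}\subset\{|\theta^*_{r_i^*}-\theta^*_{r_j^*}|\le 1.1M\}$$
for all $(i,j)$ on this event, which is exactly the containment quoted before the lemma. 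From the sandwich, the indicator identity $\indc{(i,j)\in\mathcal{E}}=\indc{|\theta^*_{r_i^*}-\theta^*_{r_j^*}|\le M/2}+\indc{(i,j)\in\mathcal{E},\,M/2<|\theta^*_{r_i^*}-\theta^*_{r_j^*}|<1.1M}$ follows tautologically (the two summand events are disjoint and their union is $\{(i,j)\in\mathcal{E}\}$, since on the sandwich event $(i,j)\in\mathcal{E}$ already implies $|\theta^*_{r_i^*}-\theta^*_{r_j^*}|<1.1M$). Multiplying through by $A_{ij}$ identifies $A_{ij}\indc{(i,j)\in\mathcal{E}}$ with $\check{A}_{ij}$ term by term.

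Finally I would combine the two events: on the intersection of the Theorem~\ref{thm:alg-1} event and the concentration event, which has probability at least $1-O(n^{-9})-O(n^{-8})=1-O(n^{-8})$, the summation indices in $\ell^{(k)}$ and $\check{\ell}^{(k)}$ coincide ($S_k=\check{S}_k$) and each summand coincides (the bracketed logistic log-likelihood term is identical, and the weights $A_{ij}\indc{(i,j)\in\mathcal{E}}$ and $\check{A}_{ij}$ agree), hence $\ell^{(k)}(\theta)=\check{\ell}^{(k)}(\theta)$ for all $\theta$ and all $k\in[K]$. Since two identical objective functions have the same set of minimizers, $\wh{\theta}^{(k)}$ and $\check{\theta}^{(k)}$ are the same up to the common translation ambiguity inherent in the BTL likelihood, giving the stated equivalence of $\{\wh{\theta}_i^{(k)}\}_{i\in S_k\cup S_{k+1}}$ and $\{\check{\theta}_i^{(k)}\}_{i\in S_k\cup S_{k+1}}$. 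The only mildly delicate point—and the part I would write out carefully—is bookkeeping the constants: one must check that the fixed separation $M/2<\cdot<1.1M$ and the threshold $-2M$ used in Step~1 are all comfortably separated from $\pm M$, so a single small constant deviation bound on $\bar{y}_{ij}^{(1)}$ controls every indicator at once; this is where the assumption $1\le M=O(1)$ is used, and it is routine rather than substantive.
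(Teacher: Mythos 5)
Your proof of the identity $\ell^{(k)}(\theta)=\check{\ell}^{(k)}(\theta)$ follows essentially the same route as the paper: Conclusion 4 of Theorem~\ref{thm:alg-1} gives $S_k=\check{S}_k$ for all $k$, and the uniform Hoeffding bound on the $L_1$-averages $\bar{y}_{ij}^{(1)}$ (this is exactly Lemma~\ref{cor:ave-con-cor} in the paper) yields the sandwich $\{|\theta_{r_i^*}^*-\theta_{r_j^*}^*|\leq M/2\}\subset\{(i,j)\in\mathcal{E}\}\subset\{|\theta_{r_i^*}^*-\theta_{r_j^*}^*|\leq 1.1M\}$, hence $\check{A}_{ij}=A_{ij}\indc{(i,j)\in\mathcal{E}}$ for every pair; intersecting the two events gives the claimed $1-O(n^{-8})$ probability. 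That part is correct and matches the paper.

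The gap is in your final sentence. From $\ell^{(k)}=\check{\ell}^{(k)}$ you only get that $\wh{\theta}^{(k)}$ and $\check{\theta}^{(k)}$ are (arbitrarily selected) elements of the \emph{same} argmin set; the conclusion that they agree up to a common shift on $S_k\cup S_{k+1}$ additionally requires that any two minimizers of this common objective differ only by a multiple of $\mathds{1}$. Translation invariance of the BTL likelihood gives one direction (shifts of minimizers are minimizers) but not the converse: if the local comparison graph defined by $\{\check{A}_{ij}\}$ on $\check{S}_{k-1}\cup\check{S}_k\cup\check{S}_{k+1}\cup\check{S}_{k+2}$ were disconnected, the argmin set would have extra degrees of freedom (independent shifts on each component), and two arbitrary minimizers could disagree by more than a common translation. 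The paper closes exactly this point by invoking Lemma~\ref{lem:bound-hessian}, which shows the Hessian of the local objective is bounded below by $\gtrsim mp$ on the subspace orthogonal to $\mathds{1}$ with probability $1-O(n^{-8})$ (equivalently, the local graph is connected and the objective is strictly convex modulo the all-ones direction), forcing uniqueness of the minimizer up to a common shift. You need this well-conditioning/connectivity step, or an equivalent argument, in place of the appeal to ``translation ambiguity inherent in the BTL likelihood.''
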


With Lemma \ref{lem:MLE-check}, it suffices to study (\ref{eq:MLE-check}) for the statistical property of the MLE. Note that $\{\check{A}_{ij}\}$ is measurable with respect to the $\sigma$-algebra generated by $\{(A_{ij},\bar{y}_{ij}^{(1)}): |\theta_{r_i^*}^*-\theta_{r_j^*}^*|<1.1M\}$. Theorem \ref{thm:alg-1} shows that $\{\check{S}_k\}$ is measurable with respect to the $\sigma$-algebra generated by $\{(A_{ij},\bar{y}_{ij}^{(1)}): |\theta_{r_i^*}^*-\theta_{r_j^*}^*|>1.9M\}$. We then reach a very important conclusion that $\{\check{A}_{ij}\}$, $\{\bar{y}_{ij}^{(2)}\}$ and $\{\check{S}_k\}$ are mutually independent, and therefore we can analyze $\check{\theta}^{(k)}$ by conditioning on the partition $\{\check{S}_k\}$. To be more specific, for any $i,j\in \check{S}_k\cup\check{S}_{k+1}$ such that $\theta_{r_i^*}^*>\theta_{r_j^*}^*$, since $R_{ij} = \indc{\check{\theta}_i^{(k)}>\check{\theta}_j^{(k)}} $,
we will provide an upper bound for $\mathbb{P}\left(\check{\theta}_i^{(k)}<\check{\theta}_j^{(k)}\Big|\{\check{S}_k\}_{k\in[K]}\right)$.

To this end, we state a result that characterizes the performance of the MLE under a BTL model with bounded skill parameters. Consider a random graph with independent edges $B_{ij}\sim\text{Bernoulli}(p_{ij})$ for $1\leq i<j\leq m$. For each $B_{ij}=1$, observe i.i.d. $y_{ijl}\sim\text{Bernoulli}(\psi(\eta_i^*-\eta_j^*))$ for $l=1,\cdots,L$. Let $\bar{y}_{ij}=\frac{1}{L}\sum_{l=1}^Ly_{ijl}$, and we define the MLE by
\begin{equation}
\wh{\eta}\in\argmin\sum_{1\leq i<j\leq m}B_{ij}\left[\bar{y}_{ij}\log\frac{1}{\psi(\eta_i-\eta_j)}+(1-\bar{y}_{ij})\log\frac{1}{1-\psi(\eta_i-\eta_j)}\right]. \label{eq:MLE-small}
\end{equation}
\begin{lemma}\label{lem:prev-paper}
Assume $\eta_1^*>\cdots>\eta_m^*$ and $\eta_1^*-\eta_m^*\leq\kappa$. There exists some constant $c\in(0,1)$ such that $p_{ij}=p$ for all $|i-j|\leq cm$ and $p_{ij}\leq p$ otherwise. As long as $\frac{mp}{\log(m+n)}\rightarrow\infty$ and $\kappa=O(1)$, then for any $\delta>0$ that is sufficiently small, there exists a constant $C>0$ such that
$$\mathbb{P}\left(\wh{\eta}_i<\wh{\eta}_j\right)\leq C\left[\exp\left(-\frac{(1-\delta)L(\eta_i^*-\eta_j^*)^2}{2(W_{i}(\eta^*)+W_j(\eta^*))}\right)+n^{-7}\right],$$
for all $1\leq i<j\leq m$, where $W_i(\eta^*)=\frac{1}{\sum_{j\in[m]\backslash\{i\}}p_{ij}\psi'(\eta_i^*-\eta_j^*)}$ for all $i\in[m]$.
\end{lemma}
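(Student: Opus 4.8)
\medskip
\noindent\textbf{Proof strategy.}
The plan is a local analysis of the BTL log‑likelihood in the spirit of the MLE analyses of \cite{chen2019spectral,chen2020partial}: establish $\ell_\infty$‑consistency of $\wh\eta$, linearize the score equation around $\eta^*$, and then read off $\mathbb{P}(\wh\eta_i<\wh\eta_j)$ from a Gaussian approximation of the linearized pairwise statistic, whose variance is an effective resistance that we control by $(W_i(\eta^*)+W_j(\eta^*))/L$. Set $\mathcal{L}(\eta)=\sum_{k<l}B_{kl}\psi'(\eta_k-\eta_l)(e_k-e_l)(e_k-e_l)^\top$ and $\mathcal{L}_\star=\mathcal{L}(\eta^*)$. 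A direct computation gives $\nabla\ell(\eta^*)_k=\sum_l B_{kl}(\psi(\eta_k^*-\eta_l^*)-\bar y_{kl})$, so $\nabla\ell(\eta^*)$ has mean $0$ with $\mathrm{Cov}(\nabla\ell(\eta^*))=\tfrac1L\mathcal{L}_\star$, while $\nabla^2\ell(\eta)=\mathcal{L}(\eta)$. Since $\kappa=O(1)$ forces $\psi'(\eta_k^*-\eta_l^*)\asymp 1$ on every active edge, and the band assumption $p_{kl}=p$ for $|k-l|\le cm$ together with $mp/\log(m+n)\to\infty$ makes the expected Laplacian $\mathcal{L}_p:=\mathbb{E}\mathcal{L}_\star$ a strong expander, we have on $\mathbf 1^\perp$ that $\lambda_{\min}(\mathcal{L}_p)\gtrsim mp$ and, by matrix Bernstein, $(1-\epsilon_m)\mathcal{L}_p\preceq\mathcal{L}_\star\preceq(1+\epsilon_m)\mathcal{L}_p$ with $\epsilon_m=o(1)$, each on an event of probability $1-O((m+n)^{-10})$. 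All bad events below are likewise polynomially rare and generate the $n^{-7}$ term.

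The spectral gap and $\psi'\asymp 1$ yield strong convexity of $\ell$ on an $O(1)$‑ball around $\eta^*$ (after fixing the gauge $\mathbf 1^\top\eta=0$); combined with the concentration $\|\nabla\ell(\eta^*)\|_2\lesssim m\sqrt{p/L}$ this gives the crude bound $\|\wh\eta-\eta^*\|_2\lesssim 1/\sqrt{pL}$, and a leave‑one‑out refinement upgrades it to the entrywise bound $\|\wh\eta-\eta^*\|_\infty\lesssim\sqrt{\log(m+n)/(mpL)}=o(1)$, which is where $mp/\log(m+n)\to\infty$ enters. Expanding $0=\nabla\ell(\wh\eta)$ around $\eta^*$ then gives $\wh\eta-\eta^*=-\mathcal{L}_\star^{\dagger}\nabla\ell(\eta^*)+\mathrm{rem}$, where the remainder comes from the Hessian difference $\mathcal{L}(\eta)-\mathcal{L}_\star$ and is kept negligible against the main term by a bootstrapping argument as in \cite{chen2020partial}. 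Writing $x=\mathcal{L}_\star^{\dagger}(e_i-e_j)$, the pairwise statistic becomes $\wh\eta_i-\wh\eta_j=(\eta_i^*-\eta_j^*)-\sum_{k<l}(x_k-x_l)B_{kl}(\psi(\eta_k^*-\eta_l^*)-\bar y_{kl})+(\text{negligible})$, and the fluctuation term is a weighted sum of independent, bounded, mean‑zero variables with variance $\tfrac1L\,x^\top\mathcal{L}_\star x=\tfrac1L(e_i-e_j)^\top\mathcal{L}_\star^\dagger(e_i-e_j)=:R_{\mathrm{eff}}/L$.

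A Berry--Esseen estimate --- whose third‑moment sum is dominated by the $\asymp mp$ edges incident to $i$ and $j$, hence is of order $\|x\|_\infty R_{\mathrm{eff}}$ --- shows the fluctuation is within $O(\|x\|_\infty/\sqrt{R_{\mathrm{eff}}})=o(1)$ of $\mathcal{N}(0,R_{\mathrm{eff}}/L)$, so $\mathbb{P}(\wh\eta_i<\wh\eta_j)\le\exp\!\big(-\tfrac{(1-\delta/2)L(\eta_i^*-\eta_j^*)^2}{2R_{\mathrm{eff}}}\big)+O((m+n)^{-7})$, the $o(1)$ slack being absorbed into $\delta$. It remains to prove $R_{\mathrm{eff}}=(e_i-e_j)^\top\mathcal{L}_\star^\dagger(e_i-e_j)\le(1+\delta/2)(W_i(\eta^*)+W_j(\eta^*))$. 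Using $\mathcal{L}_\star\succeq(1-\epsilon_m)\mathcal{L}_p$ on $\mathbf 1^\perp$, it suffices to prove this for $\mathcal{L}_p$, whose diagonal is exactly $(\mathcal{L}_p)_{ii}=\sum_k p_{ik}\psi'(\eta_i^*-\eta_k^*)=1/W_i(\eta^*)$. Viewing $\mathcal{L}_p$ as an electric network and sending a unit current from $i$ to $j$ with potential $v$, we have $R_{\mathrm{eff}}(i,j)=v_i-v_j$ and the node‑balance equation $v_i=\big(1+\sum_k p_{ik}\psi'(\eta_i^*-\eta_k^*)v_k\big)/(\mathcal{L}_p)_{ii}$, and similarly at $j$ with $-1$. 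Because the spectral gap $\lambda_{\min}(\mathcal{L}_p|_{\mathbf 1^\perp})\gtrsim mp$ is of the same order as the degrees $(\mathcal{L}_p)_{ii}$, the potential is nearly flat away from $i$ and $j$, i.e.\ $\max_k|v_k-\bar v|=o(W_i\wedge W_j)$; substituting this into the two equations gives $v_i\le(1+o(1))W_i$ and $v_j\ge-(1+o(1))W_j$, hence $R_{\mathrm{eff}}(i,j)\le(1+o(1))(W_i+W_j)$. Combining the pieces and absorbing $o(1)$ into $\delta$ finishes the proof.

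The hard part is this last step --- the sharp identification $R_{\mathrm{eff}}(i,j)\approx W_i+W_j$, which rests on $\ell_\infty$‑control of the electric potential $\mathcal{L}_p^\dagger(e_i-e_j)$ relative to $\sqrt{R_{\mathrm{eff}}}$; the very same potential bound is also what forces the Berry--Esseen error to $o(1)$, so it is the genuine technical heart. The band hypothesis on $\{p_{ij}\}$ is precisely the ingredient that supplies the spectral gap that this control needs, while the remaining pieces --- matrix concentration, strong convexity, the leave‑one‑out $\ell_\infty$ bound, and remainder bootstrapping --- are by now routine for BTL‑type MLE analyses.
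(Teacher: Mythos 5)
Your proof takes a genuinely different route from the paper, and it contains two real gaps that need to be flagged.

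The paper's proof is organized around the one-step oracle
$\wt{\eta}_j=\eta_j^*-\frac{\sum_{l\neq j}B_{jl}(\bar{y}_{jl}-\psi(\eta_j^*-\eta_l^*))}{\sum_{l\neq j}B_{jl}\psi'(\eta_j^*-\eta_l^*)}$,
which is a coordinate-wise Newton update from the truth (not a Laplacian inverse). Conditionally on $B$, the difference $\wt\eta_j-\eta_j^*-(\wt\eta_i-\eta_i^*)$ is a sum of independent, bounded, mean-zero terms whose variance is \emph{by construction} $\approx(W_i(\eta^*)+W_j(\eta^*))/L$, so a Chernoff/Bernstein bound immediately yields the right exponent. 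The only nontrivial piece is the leave-one-out estimate $|\wh\eta_i-\wt\eta_i|\vee|\wh\eta_j-\wt\eta_j|\leq\delta\Delta$, which the paper imports from \cite{chen2020partial}; the new ingredient here is Lemma~\ref{lem:bound-hessian}, which extends that estimate to the banded non-uniform $\{p_{ij}\}$ by controlling the restricted eigenvalue of the Hessian via Lemma~\ref{lem:band_laplacian_eigenvalue}. Your proposal instead linearizes with $x=\mathcal{L}_\star^\dagger(e_i-e_j)$, which leads to an effective resistance $R_{\mathrm{eff}}$ rather than directly to $W_i+W_j$, and then you must prove the sharp identification $R_{\mathrm{eff}}\leq(1+o(1))(W_i+W_j)$. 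The paper's choice of proxy avoids that step entirely, which is exactly why it is made.

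The first gap: a Berry--Esseen bound gives an \emph{additive} error $o(1)$ on $\mathbb{P}(\wh\eta_i<\wh\eta_j)$, not a multiplicative $(1+\delta)$ on the exponent. In the intermediate regime where the target probability is on the order of $n^{-7}$ (i.e.\ $L(\eta_i^*-\eta_j^*)^2/(W_i+W_j)\asymp\log n$), an additive error of $o(1)$ --- or even the non-uniform refinement, whose error decays only polynomially in the threshold --- is far too large to certify that the tail is $\leq\exp(-(1-\delta)L(\eta_i^*-\eta_j^*)^2/(2R_{\mathrm{eff}}))$. You need a Chernoff/Bernstein-type moment-generating-function argument here, which is what the paper (and \cite{chen2020partial}) actually uses; CLT-plus-Berry--Esseen cannot produce the correct constant in the exponent of an exponentially small probability.

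The second gap: the step $R_{\mathrm{eff}}(i,j)\leq(1+\delta/2)(W_i(\eta^*)+W_j(\eta^*))$ is asserted by appeal to a ``nearly flat potential'' heuristic but is not proved. It is true that for a strong expander the effective resistance between two vertices is close to the sum of inverse (weighted) degrees, but making this precise requires entrywise control of $\mathcal{L}_p^\dagger(e_i-e_j)$, not just the spectral gap ($\|v\|_\infty\lesssim1/(mp)$ is the same order as $W_i+W_j$, so it cannot by itself certify the multiplicative error). You acknowledge this as the ``genuine technical heart,'' but the argument as written is circular: the conclusion $\max_k|v_k-\bar v|=o(W_i\wedge W_j)$ is itself essentially equivalent to $R_{\mathrm{eff}}\approx W_i+W_j$. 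If you want to pursue this route, you would need a rigorous entrywise bound on the pseudoinverse (e.g.\ a leave-one-out or local-law argument for $\mathcal{L}_p^\dagger$), which is a real lemma that the paper's $\wt\eta$ construction is specifically designed to avoid.
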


The proof of Lemma \ref{lem:prev-paper}, which relies on a recently developed leave-one-out technique in the analysis of the BTL model \citep{chen2019spectral,chen2020partial}, will be given in Section \ref{sec:pf-lemmas}.

By conditioning on $\{\check{S}_k\}$, the statistical property of (\ref{eq:MLE-check}) is a direct consequence of Lemma \ref{lem:prev-paper}.
Note that $\mathbb{P}\left(\check{\theta}_i^{(k)}<\check{\theta}_j^{(k)}\Big|\{\check{S}_k\}_{k\in[K]}\right)$ is a function of $\{\check{S}_k\}_{k\in[K]}$, and we will establish a uniform upper bound for this conditional probability for any partition $\{\check{S}_k\}_{k\in[K]}$ satisfying the following conditions:
\begin{enumerate}
\item[(i)] For any $k\in[K]$ and any $i,j\in \check{S}_{k-1}\cup \check{S}_k\cup \check{S}_{k+1}$, we have $|\theta_{r_i^*}^*-\theta_{r_j^*}^*|\leq C_1M$; 
\item[(ii)] For any $k\in[K]$ and any $i\in \check{S}_k$, we have $\left\{j\in[n]: |r_i^*-r_j^*|\leq \frac{C_2M}{\beta}\right\}\subset \check S_{k-1}\cup \check S_k\cup \check S_{k+1}$;
\item[(iii)] For any $k\in[K-1]$ and any $i\in  \check{S}_{k-1}\cup \check{S}_k\cup \check{S}_{k+1}\cup \check{S}_{k+2}$, we have $\Big|\left\{j\in[n]: |\theta^*_{r_i^*}-\theta^*_{r_j^*}|\leq \frac{M}{2}\right\} \cap ( \check{S}_{k-1}\cup \check{S}_k\cup \check{S}_{k+1}\cup \check{S}_{k+2})\Big| \geq C_3 \br{\frac{ M}{\beta}\wedge n}$.
\end{enumerate}
Note that we use the convention $\check{S}_0=\check{S}_{K+1}=\varnothing$ and $C_1,C_2,C_3$ are the same constants in Theorem \ref{thm:alg-1}.
Consider any partition $\{\check{S}_k\}_{k\in[K]}$ satisfying the three conditions above.  When applying Lemma \ref{lem:prev-paper}, by Conditions (i) and (ii), we have $\kappa=2C_1M$ and $m=|\check{S}_{k-1}\cup \check{S}_k\cup \check{S}_{k+1}\cup \check{S}_{k+2}|\asymp \frac{1}{\beta}\wedge n$.  We also know that for any $i,j \in  \check{S}_{k-1}\cup \check{S}_k\cup \check{S}_{k+1}\cup \check{S}_{k+2}$ such that $|\theta^*_{r_i^*}-\theta^*_{r_j^*}|\leq \frac{M}{2}$, we have $\check{A}_{ij}=A_{ij}\sim\text{Bernoulli}(p)$. Then, Condition (iii) implies the existence of a band in $\{(r_i^*,r_j^*):i,j\in  \check{S}_{k-1}\cup \check{S}_k\cup \check{S}_{k+1}\cup \check{S}_{k+2}\}$ with width at least $cm$ for some constant $c>0$, such that $\check{A}_{ij}\sim\text{Bernoulli}(p)$ for all pairs in the band. For any other $(i,j)$, we have $\check{A}_{ij}\sim\text{Bernoulli}(p_{ij})$ with $p_{ij}\leq p$.
%
%
Having checked the conditions of Lemma \ref{lem:prev-paper}, we obtain the following result for the local MLE (\ref{eq:MLE-check}),
\begin{equation}
\mathbb{P}\left(\check{\theta}_i^{(k)}<\check{\theta}_j^{(k)}\Big|\{\check{S}_k\}_{k\in[K]}\right)\leq C\left[\exp\left(-\frac{(1-\delta)npL(\theta_{r_i^*}^*-\theta_{r_j^*}^*)^2}{2(V_{r_i^*}(\theta^*)+V_{r_j^*}(\theta^*))}\right)+n^{-7}\right], \label{eq:conditional-comp}
\end{equation}
for any $i,j\in \check{S}_k\cup\check{S}_{k+1}$ such that $\theta_{r_i^*}^*>\theta_{r_j^*}^*$. Recall the definition of $V_i(\theta^*)$ in (\ref{eq:BTL-var}). The constant $\delta$ in (\ref{eq:conditional-comp}) can be made arbitrarily small with a sufficiently large $M$. To derive (\ref{eq:conditional-comp}) from Lemma \ref{lem:prev-paper}, we only need to show
$$p\sum_{j\in[n]\backslash\{i\}}\psi'(\theta_i^*-\theta_j^*)\leq \left(1+O(e^{-C_2M})\right)\sum_{j\in(\check{S}_{k-1}\cup \check{S}_k\cup \check{S}_{k+1}\cup \check{S}_{k+2})\backslash\{i\}}p_{ij}\psi'(\theta_{r_i^*}^*-\theta_{r_j^*}^*),$$
for all $i\in \check{S}_k\cup\check{S}_{k+1}$.
This is true by a similar argument that leads to (\ref{eq:oracle-fisher-asymp}), together with Condition (ii).
Finally, by Theorem \ref{thm:alg-1}, Conditions (i)-(iii) hold for $\{\check{S}_k\}_{k\in[K]}$ with high probability, and thus (\ref{eq:conditional-comp}) is a high-probability bound.
A similar bound to (\ref{eq:conditional-comp}) also holds for (\ref{eq:league-MLE}) by the conclusion of Lemma \ref{lem:MLE-check}.


\subsection{Analysis of Algorithm \ref{alg:whole}} \label{sec:whole} 
With the help of Lemma \ref{lem:anderson-ineq}, Theorem \ref{thm:alg-1}, Lemma \ref{lem:MLE-check} and Lemma \ref{lem:prev-paper}, we are ready to prove that Algorithm \ref{alg:whole} achieves the minimax rate of full ranking.

\begin{proof}[Proof of Theorem \ref{thm:BTL-minimax} (upper bound)]
Let $\mathcal{G}$ be the event that the conclusions of Theorem \ref{thm:alg-1} and Lemma \ref{lem:MLE-check} hold. We have $\mathbb{P}(\mathcal{G}^c)=O(n^{-8})$. In addition, we use the notation $\check{\mathcal{S}}$ for the event that $\{\check{S}_k\}_{k\in[K]}$ satisfies Conditions (i)-(iii) listed in Section \ref{sec:analysis_step2}.  It is clear that $\mathcal{G}\subset \check{\mathcal{S}}$.

 
 By Lemma \ref{lem:anderson-ineq}, we have
$$\mathbb{E}\k(\wh{r},r^*)\leq \frac{4}{n}\sum_{1\leq i\neq j\leq n}\mathbb{P}(R_{ij}\neq R_{ij}^*).$$
It suffices to give a bound for $\mathbb{P}(R_{ij}\neq R_{ij}^*)$ for every pair $i\neq j$.
Note that we have $\mathbb{P}(R_{ij}\neq R_{ij}^*) \leq \mathbb{P}\left(R_{ij}\neq R_{ij}^*, \mathcal{G}\right) + \mathbb{P}(\mathcal{G}^c)$. Then.
\begin{align*}
\mathbb{P}(R_{ij}\neq R_{ij}^*,\mathcal{G}) & =  \sum_{k=1}^K\sum_{l=1}^K\mathbb{P}(R_{ij}\neq R_{ij}^*,\mathcal{G}, i \in S_k, j \in S_l)\\
& = \sum_{(k,l)\in[K]^2: \abs{k-l}\leq 1} \mathbb{P}(R_{ij}\neq R_{ij}^*,\mathcal{G}, i \in S_k, j \in S_l)\\
&\quad  +  \sum_{(k,l)\in[K]^2: \abs{k-l}\geq  2} \mathbb{P}(R_{ij}\neq R_{ij}^*,\mathcal{G}, i \in S_k, j \in S_l).
\end{align*}
The second term above is zero. This is due to the analysis of Step 3 in Section \ref{sec:analysis_step3} which shows $\sum_{(k,l)\in[K]^2: \abs{k-l}\geq  2} \mathbb{I}\{R_{ij}\neq R_{ij}^*, i \in S_k, j \in S_l\}=0$ under the event $\mathcal{G}$. Hence, we only need to study the first term.
Without loss of generality, consider $\theta_{r_i^*}>\theta_{r_j^*}^*$. Then, the event $\{R_{ij}\neq R_{ij}^*, \mathcal{G},i\in S_k,j\in S_{k}\}$ is equivalent to $\{\wh{\theta}^{(k)}_i<\wh{\theta}^{(k)}_j, \mathcal{G},i\in S_k,j\in S_{k}\}$, which is further equivalent to $\{\check{\theta}^{(k)}_i<\check{\theta}^{(k)}_j, \mathcal{G},i\in \check{S}_k,j\in \check{S}_{k}\}$ by the definition of $\mathcal{G}$. We thus have
\begin{align*}
\mathbb{P}(R_{ij}\neq R_{ij}^*,\mathcal{G})
& = \sum_{(k,l)\in[K]^2: \abs{k-l}\leq 1} \mathbb{P}(\check{\theta}^{(k)}_i<\check{\theta}^{(k)}_j, \mathcal{G},i\in \check{S}_k,j\in \check{S}_{l}) \\
& \leq   \sum_{(k,l)\in[K]^2: \abs{k-l}\leq 1} \mathbb{P}(\check{\theta}^{(k)}_i<\check{\theta}^{(k)}_j, \check{\mathcal{S}},i\in \check{S}_k,j\in \check{S}_{l}) \\
& =  \sum_{(k,l)\in[K]^2: \abs{k-l}\leq 1}  \mathbb{P}\left(\check{\theta}^{(k)}_i<\check{\theta}^{(k)}_j\Big|\check{\mathcal{S}},i\in \check{S}_k,j\in \check{S}_{l}\right)\mathbb{P}\left(\check{\mathcal{S}},i\in \check{S}_k,j\in \check{S}_{l}\right)\\
& \leq C\left[\exp\left(-\frac{(1-\delta)npL(\theta_{r_i^*}^*-\theta_{r_j^*}^*)^2}{2(V_{r_i^*}(\theta^*)+V_{r_j^*}(\theta^*))}\right)+n^{-7}\right] \sum_{(k,l)\in[K]^2: \abs{k-l}\leq 1} \mathbb{P}\left(\check{\mathcal{S}}, i\in \check{S}_k,j\in \check{S}_{l}\right) \\
&\leq C\left[\exp\left(-\frac{(1-\delta)npL(\theta_{r_i^*}^*-\theta_{r_j^*}^*)^2}{2(V_{r_i^*}(\theta^*)+V_{r_j^*}(\theta^*))}\right)+n^{-7}\right],
\end{align*}
for some constant $C>0$ and some $\delta>0$ that is arbitrarily small. The second last inequality above is by Lemma \ref{lem:prev-paper}, or more specifically, (\ref{eq:conditional-comp}), as we show (\ref{eq:conditional-comp}) holds for any $\{\check{S}_k\}_{k\in[K]}$ satisfying Conditions (i)-(iii) listed in Section \ref{sec:analysis_step2}.
Since $\mathbb{P}(\mathcal{G}^c)=O(n^{-8})$, we obtain the bound
\begin{equation}
\mathbb{P}(R_{ij}\neq R_{ij}^*) \leq 2C\left[\exp\left(-\frac{(1-\delta)npL(\theta_{r_i^*}^*-\theta_{r_j^*}^*)^2}{2(V_{r_i^*}(\theta^*)+V_{r_j^*}(\theta^*))}\right)+n^{-7}\right],\label{eq:trueforallpairs}
\end{equation}
for all $i\neq j$.

Summing the bound (\ref{eq:trueforallpairs}) over all $i\neq j$, we have
\begin{eqnarray}
\nonumber \mathbb{E}\k(\wh{r},r^*) &\leq& \frac{8C}{n}\sum_{1\leq i\neq j\leq n}\exp\left(-\frac{(1-\delta)npL(\theta_{r_i^*}^*-\theta_{r_j^*}^*)^2}{2(V_{r_i^*}(\theta^*)+V_{r_j^*}(\theta^*))}\right) + 8Cn^{-6} \\
\label{eq:sum-exp} &=& \frac{8C}{n}\sum_{1\leq i\neq j\leq n}\exp\left(-\frac{(1-\delta)npL(\theta_{i}^*-\theta_{j}^*)^2}{2(V_{i}(\theta^*)+V_{j}(\theta^*))}\right) + 8Cn^{-6}.
\end{eqnarray}
Now it is just a matter of simplifying the expression (\ref{eq:sum-exp}). We consider the following two cases: $\frac{Lp\beta^2}{\beta\vee n^{-1}}\leq 1$ and $\frac{Lp\beta^2}{\beta\vee n^{-1}}> 1$.

First, we consider the case $\frac{Lp\beta^2}{\beta\vee n^{-1}}\leq 1$.
By Lemma \ref{lem:sum-psi-prime} proved in Section \ref{sec:pf-BTL}, there exist constants $c_1,c_2>0$, such that
\begin{equation}
c_1\left(\beta\vee \frac{1}{n}\right)\leq \frac{V_{i}(\theta^*)}{n}\leq c_2\left(\beta\vee \frac{1}{n}\right),\label{eq:order-of-variance}
\end{equation}
for all $\theta^*\in\Theta_n(\beta,C_0)$ and all $i\in[n]$. Then, for each $i\in[n]$,
\begin{eqnarray*}
\sum_{j\in[n]\backslash\{i\}}\exp\left(-\frac{(1-\delta)npL(\theta_{i}^*-\theta_{j}^*)^2}{2(V_{i}(\theta^*)+V_{j}(\theta^*))}\right) &\leq& \sum_{j\in[n]\backslash\{i\}}\exp\left(-\frac{1}{3c_2}(i-j)^2\frac{Lp\beta^2}{\beta\vee n^{-1}}\right) \\
&\leq& \int_0^{\infty}\exp\left(-\frac{1}{3c_2}x^2\frac{Lp\beta^2}{\beta\vee n^{-1}}\right) dx \\
&=& \sqrt{\frac{3\pi c_2}{4}}\sqrt{\frac{\beta\vee n^{-1}}{Lp\beta^2}},
\end{eqnarray*}
and we have $\mathbb{E}\k(\wh{r},r^*)\lesssim \sqrt{\frac{\beta\vee n^{-1}}{Lp\beta^2}}$. The definition of the loss function implies $\mathbb{E}\k(\wh{r},r^*)\leq n$, and thus we obtain the rate $n\wedge\sqrt{\frac{\beta\vee n^{-1}}{Lp\beta^2}}$ when $\frac{Lp\beta^2}{\beta\vee n^{-1}}\leq 1$.

Next, we consider the case $\frac{Lp\beta^2}{\beta\vee n^{-1}}> 1$. 
For any $|i-j|\leq C_0\sqrt{c_2/c_1}$, we have $V_j(\theta^*)\leq (1+\delta')V_i(\theta^*)$ for some $\delta'=o(1)$. This is by the definition of the variance function and the fact that $\sup_x\left|\frac{\psi'(x+\Delta)}{\psi'(x)}-1\right|\lesssim |\Delta|$ for $\Delta=o(1)$. Therefore, we have
\begin{eqnarray*}
&& \sum_{1\leq i\neq j\leq n: |i-j|\leq C_0\sqrt{c_2/c_1}}\exp\left(-\frac{(1-\delta)npL(\theta_{i}^*-\theta_{j}^*)^2}{2(V_{i}(\theta^*)+V_{j}(\theta^*))}\right) \\
&\lesssim& \sum_{i=1}^{n-1}\exp\left(-\frac{(1-2\delta)npL(\theta_i^*-\theta_{i+1}^*)^2}{4V_i(\theta^*)}\right),
\end{eqnarray*}
By (\ref{eq:order-of-variance}), we also have
\begin{eqnarray*}
&& \sum_{1\leq i\neq j\leq n: |i-j|> C_0\sqrt{c_2/c_1}}\exp\left(-\frac{(1-2\delta)npL(\theta_{i}^*-\theta_{j}^*)^2}{2(V_{i}(\theta^*)+V_{j}(\theta^*))}\right) \\
&\lesssim& \sum_{1\leq i\neq j\leq n: |i-j|> C_0\sqrt{c_2/c_1}}\exp\left(-\frac{(1-2\delta)pL\beta^2(i-j)^2}{2c_2(\beta\vee n^{-1})}\right) \\
&\lesssim& n\exp\left(-\frac{(1-2\delta)pL\beta^2C_0^2}{2c_1(\beta\vee n^{-1})}\right) \\
&\lesssim& \sum_{i=1}^{n-1}\exp\left(-\frac{(1-2\delta)npL(\theta_i^*-\theta_{i+1}^*)^2}{4V_i(\theta^*)}\right).
\end{eqnarray*}
The desired bound for $\mathbb{E}\k(\wh{r},r^*)$ immediately follows by summing up the above bounds.
\end{proof}

\subsection{A Data-Driven $h$.} \label{sec:h}
Our proposed algorithm relies on a tuning parameter $h=\frac{pM}{\beta}$ that is unknown in practice. This quantity can be replaced by a data-driven version, defined as
\begin{equation}
\wh{h}=\frac{1}{n}\sum_{1\leq i<j\leq n}A_{ij}\mathbb{I}\{1.2 M\leq |\psi^{-1}(\bar{y}_{ij}^{(1)})| \leq 1.8M\}. \label{eq:global-h}
\end{equation}
A standard concentration result implies that $\wh{h}\asymp \frac{pM}{\beta}$ with high probability. Moreover, by defining
$$\check{h}=\frac{1}{n}\sum_{\substack{1\leq i<j\leq n\\1.1M<|\theta_{r_i^*}^*-\theta_{r_j^*}^*|<1.9M}}A_{ij}\mathbb{I}\{1.2 M\leq |\psi^{-1}(\bar{y}_{ij}^{(1)})| \leq 1.8M\},$$
it can be shown that $\wh{h}=\check{h}$ with high probability. Since $\check{h}$ is measurable with respect to the $\sigma$-algebra generated by $\{(A_{ij},\bar{y}_{ij}^{(1)}): 1.1M<|\theta_{r_i^*}^*-\theta_{r_j^*}^*|<1.9M\}$, we still have the asymptotic independence property between the league partition and local MLE after $h$ being replaced by $\wh{h}$ in Algorithm \ref{alg:partition}. Therefore, with a data-driven $\wh{h}$ being used in the proposed algorithm, the upper bound conclusion of Theorem \ref{thm:BTL-minimax} still holds.

\section{Numerical Results}\label{sec:simulation}

In this section, we conduct numerical experiments to study the statistical and computational properties of Algorithm \ref{alg:whole}.

\paragraph{Simulation Setting.}
In our experiment, we consider $\theta^*\in\mathbb{R}^n$ with $n=1000$. In particular, we set $\theta_i^*=-\beta i$ for all $i\in[n]$ with some $\beta\in[0.001, 0.05]$. The range of $\beta$ implies that the dynamic range $\theta_1^*-\theta_{1000}^*$ takes value in $[0.999, 49.95]$. We assume the true rank is the identity permutation, i.e., $r_i^*=i$ for all $ i\in[n]$. We also consider three different $(L, L_1)$ pairs: (50, 10), (75, 15), (100, 20) in Algorithm \ref{alg:whole}.

\paragraph{Implementation.}
In the implementation of Algorithm \ref{alg:whole}, we set $M = 5$. For the choice of $h$, though the recommended data-driven estimator (\ref{eq:global-h}) works for the theoretical purpose, it may not be a sensible choice for a data set with a moderate size. Note that with $M = 5$, we have $\psi(1.2M)=0.9975274$ and $\psi(1.8M)=0.9998766$, respectively, and thus the indicator $\mathbb{I}\{1.2 M\leq |\psi^{-1}(\bar{y}_{ij}^{(1)})| \leq 1.8M\}$ is usually zero in (\ref{eq:global-h}). To address this issue, we set $h$ by
$$h=0.4\times\frac{1}{n}\sum_{1\leq i<j\leq n}A_{ij}\indc{\psi(-M)\leq\bar{y}_{ij}^{(2)}\leq\psi(M)}.$$
The computation of the local MLE (\ref{eq:league-MLE}) is implemented by the MM algorithm \citep{hunter2004mm}. All simulations are implemented in Python (along with NumPy package, whose backend is written in C) using a 2019 MacBook Pro, 15-inch, 2.6GHz 6-core Intel Core i7. 

\paragraph{Accuracy of League Partition.}
We first study Algorithm \ref{alg:partition}, which is Step 1 of Algorithm \ref{alg:whole}. The purpose of Algorithm \ref{alg:partition} is to divide all players into $K$ leagues. The average value of $K$ from 50 independent experiments is reported in Figure \ref{sim:league-cnt}. This number increases with $\beta$ linearly, which agrees with our theoretical bound $K=O_{\mathbb{P}}(n\beta\vee 1)$. 
\begin{figure}[h]
	\centering
	\includegraphics[width=0.7\textwidth, trim=0 40 0 60, clip]{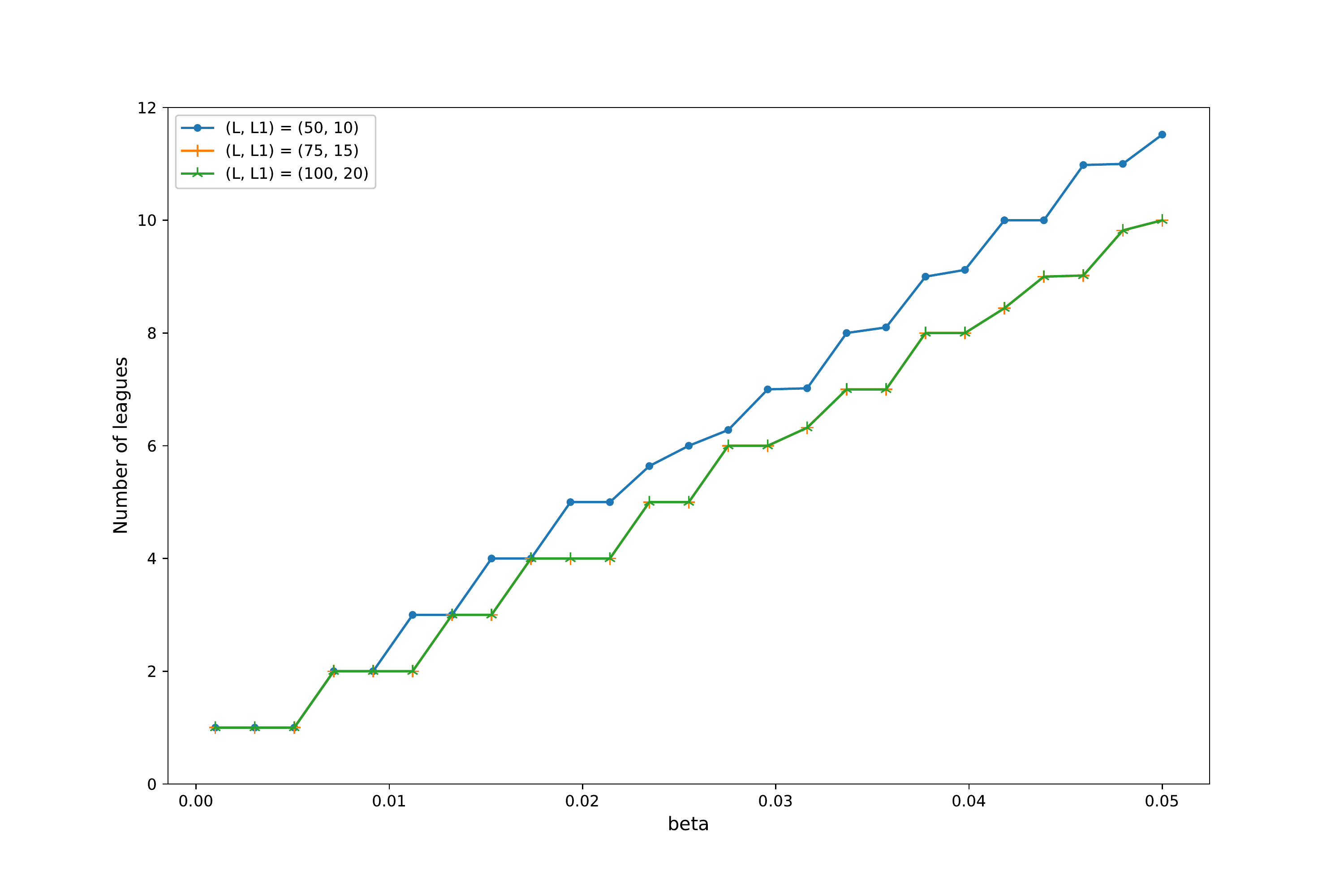}
	\caption{\textsl{The number of leagues obtained by Algorithm \ref{alg:partition}. The orange curve is mostly overlapped by the green curve.}}
	\label{sim:league-cnt}
\end{figure}

To quantify the accuracy of Algorithm \ref{alg:partition}, we define the following metric,
$$E_{partition}=\begin{cases}
\frac{1}{K - 2}\sum_{k=2}^{K-1}\indc{\max\left\{r_i^*:i\in\cup_{k^\prime<k}S_{k^\prime}\right\}>\min\left\{r_i^*:i\in\cup_{k^\prime>k}S_{k^\prime}\right\}}, & K \geq 3,\\
0, & K < 3.
\end{cases}$$
The quantity $E_{partition}$ is essentially designed to verify the Conclusion 3 in Theorem \ref{thm:alg-1}, and we expect that $E_{partition}$ should be $0$ with high probability. Note that Conclusion 3 of Theorem \ref{thm:alg-1} guarantees the correctness of the cross-league pairwise relation estimation, which is Step 3 of Algorithm \ref{alg:whole}. For each combination of $(\beta, L, L_1)$, we generate independent data and repeat the experiments  50 times. It turns out that $E_{partition}$ is always 0, which agrees with the theoretical property of the league partition.

\paragraph{Statistical Error.}

Next, we study the ranking error of the proposed divide-and-conquer algorithm (Algorithm \ref{alg:whole}) under the Kendall's tau distance defined by (\ref{eq:kendall}). For comparison, we also implement the global MLE and the spectral method. The MLE outputs the rank of the entries of $\wh{\theta}$ that maximizes the negative log-likelihood function
\begin{equation}
\sum_{1\leq i<j\leq n}A_{ij}\left[\bar{y}_{ij}\log\frac{1}{\psi(\theta_i-\theta_j)}+(1-\bar{y}_{ij})\log\frac{1}{1-\psi(\theta_i-\theta_j)}\right], \label{eq:global-log-lik}
\end{equation}
where $\bar{y}_{ij}=\frac{1}{L}\sum_{l=1}^Ly_{ijl}$. The spectral method, also known as Rank Centrality, is a ranking algorithm proposed by \cite{negahban2017rank}. Define a matrix $P\in\mathbb{R}^{n\times n}$ by
$$
P_{ij}=\begin{cases}
\frac{1}{d}A_{ij}\bar{y}_{ji}, & i\neq j, \\
1 - \frac{1}{d}\sum_{l\in[n]\backslash\{i\}}A_{il}\bar{y}_{li}, & i=j,
\end{cases} \label{eq:spec-P}
$$
where $d$ is set to be twice the maximum degree of the random graph $A$. Note that $P$ is the transition matrix of a Markov chain. Let $\wh{\pi}$ be the stationary distribution of this Markov chain, and the spectral method outputs the rank of the entries of the vector $\wh{\pi}$.

Both the MLE and the spectral method have been studied for parameter estimation \citep{negahban2017rank,chen2019spectral} and top-$k$ ranking \citep{chen2019spectral,chen2020partial} under the BTL model. However, to the best of our knowledge, the statistical properties of the two methods for full ranking have not been studied in the literature. The recent work \cite{chen2019spectral} has established the estimation errors of the skill parameter for both the MLE and the spectral method. Their results involve a factor of $e^{O(n\beta)}$ in the estimation error under an $\ell_{\infty}$ loss, which suggests that the MLE and the spectral method may not perform well when the dynamic range $n\beta$ diverges.

We implement the MLE, the spectral method, and the divide-and-conquer algorithm for various combinations of $\beta$ and $L$. The results of each setting are computed by averaging across $50$ independent experiments.
\begin{figure}[h]
	\centering
	\includegraphics[width=1.1\textwidth]{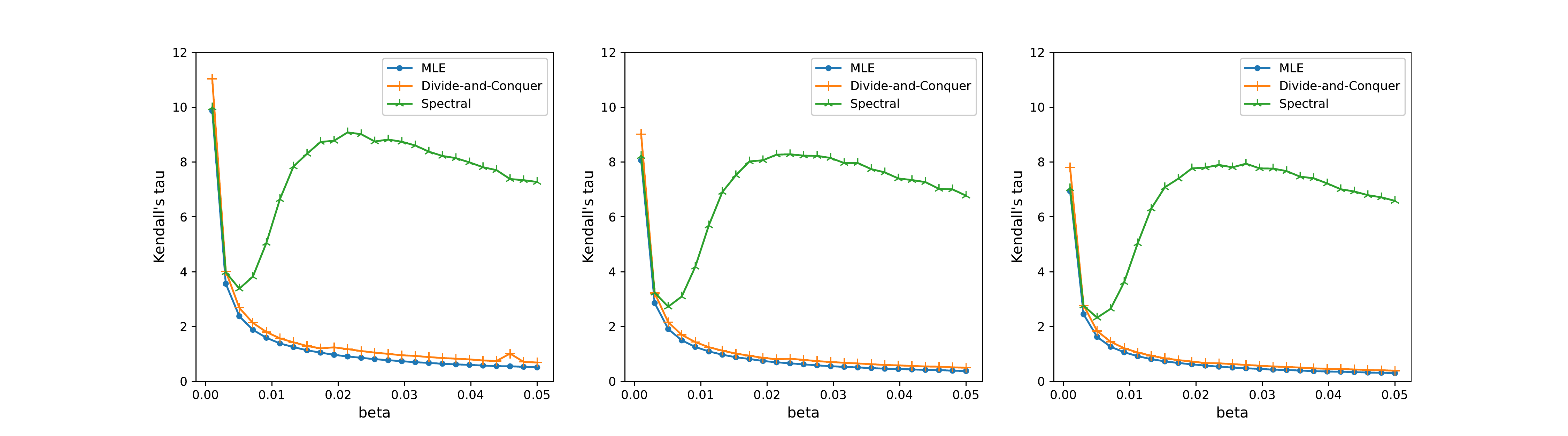}
	\caption{\textsl{Statistical error under Kendall's tau. Left: $(L, L_1)=(50, 10)$; Middle: $(L, L_1)=(75, 15)$; Right: $(L, L_1) = (100, 20)$.}}
	\label{sim:error-plot}
\end{figure}
As shown in Figure \ref{sim:error-plot}, the spectral method is significantly worse than the MLE and the divide-and-conquer algorithm. The performance of the spectral method may be explained by the $e^{O(n\beta)}$ factor in the $\ell_{\infty}$ norm error bound obtained by \cite{chen2019spectral}, though the exact relation between the $\ell_{\infty}$ error and the full ranking error is not clear to us. On the other hand, the error curves of the MLE and the divide-and-conquer algorithm are very close. Since the divide-and-conquer algorithm has been proved to be minimax optimal, the simulation results suggest that the MLE may also enjoy such statistical optimality.

The current analysis of the MLE \citep{chen2019spectral,chen2020partial} crucially depends on the spectral property of the Hessian matrix $H(\theta^*)$ of the objective (\ref{eq:global-log-lik}). It is known that the condition number of $H(\theta^*)$ on the subspace orthogonal to $\mathds{1}_{n}$ is of order $e^{O(n\beta)}$, which explains the $e^{O(n\beta)}$ factor in the $\ell_{\infty}$ estimation error of the MLE \citep{chen2019spectral}. However, our simulation study reveals that the error bound of \cite{chen2019spectral} can be potentially loose. The definition of the Kendall's tau distance suggests that a sharp analysis of the MLE requires a careful study of the random variable $\wh{\theta}_{r_i^*}-\wh{\theta}_{r_j^*}$. We conjecture that the variance of $\wh{\theta}_{r_i^*}-\wh{\theta}_{r_j^*}$ should be approximately proportional to $(e_{r_i^*}-e_{r_j^*})^TH(\theta^*)^{\dagger}(e_{r_i^*}-e_{r_j^*})$, where $e_j$ is the $j$th canonical vector with all entries being 0 except that the $j$th entry is 1. Since $H(\theta^*)$ can be viewed as the graph Laplacian of some random weighted graph, there may exist random matrix tools to study $(e_{r_i^*}-e_{r_j^*})^TH(\theta^*)^{\dagger}(e_{r_i^*}-e_{r_j^*})$ directly without using the naive condition number bound, and we leave this interesting direction as a future project.

In comparison, our divide-and-conquer algorithm does not need to solve the global MLE. Since the objective function of each local MLE is well conditioned (Lemma \ref{lem:bound-hessian}), Algorithm \ref{alg:whole} is provably optimal in addition to its good performance in simulation.

\begin{figure}[h]
	\centering
	\includegraphics[width=1.1\textwidth]{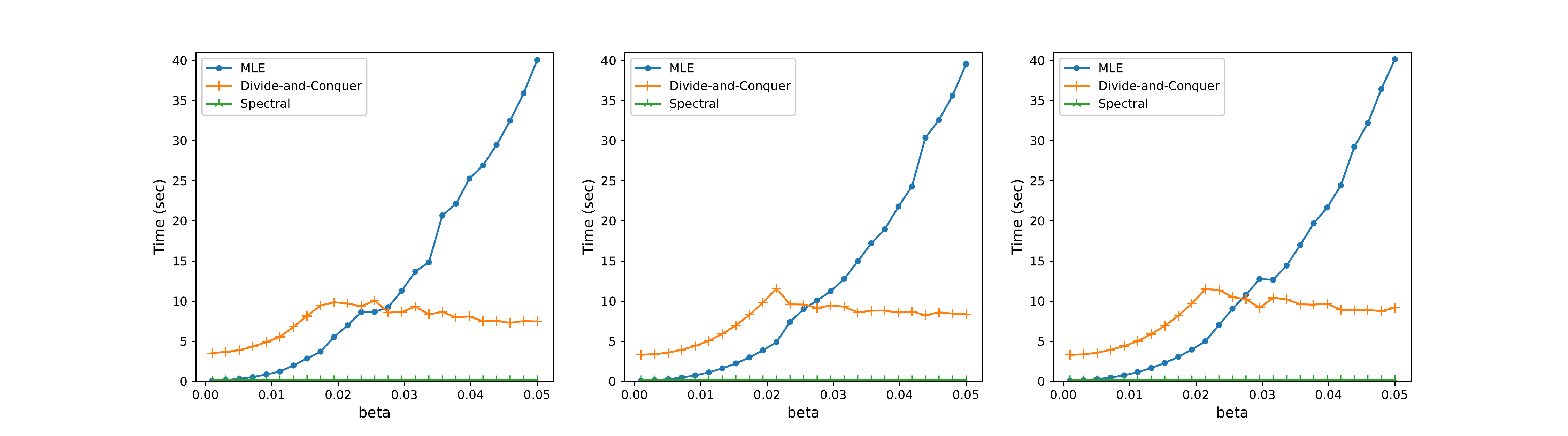}
	\caption{\textsl{Running time comparison. Left: $(L, L_1)=(50, 10)$; Middle: $(L, L_1)=(75, 15)$; Right: $(L, L_1) = (100, 20)$.}}
	\label{sim:time-plot}
\end{figure}
\paragraph{Computational Cost.}
Finally, we compare the computational costs of the three methods. The average time needed to run the three algorithms is given in Figure \ref{sim:time-plot}. The spectral method, though suffers from its unsatisfactory statistical error, is the fastest, partly because finding the stationary distribution is just a single line of code using a NumPy function whose backend is C. The running time of the MLE grows rapidly as $\beta$ increases. This can be explained by the growing condition number of the Hessian matrix $H(\theta^*)$. While the condition number may not affect the statistical error of the MLE, it does have a rather strong effect on its computational cost. On the other hand, the running time for the divide-and-conquer method (Algorithm \ref{alg:whole}) first increases with $\beta$, and then stabilizes. This is the effect of Algorithm \ref{alg:partition}, which divides a large difficult problem into many small sub-problems, and after that each small sub-problem can be conquered efficiently. In fact, we can further improve the computational efficiency by solving the sub-problems in parallel.  The initial increase of the running time of Algorithm \ref{alg:whole} is because of the additional league partition step. Recall that the league partition step divides the players into $K=O_{\mathbb{P}}(n\beta\vee 1)$ subsets. When $\beta$ is small, we have a very small $K$. According to the formula (\ref{eq:league-loss}),  the local MLE is as difficult as the global MLE whenever $K\leq 4$. In this regime, the divide-and-conquer method is more time consuming because of the additional league partition step. On the other hand, as $\beta$ grows, the computational advantage of the divide-and-conquer strategy becomes significant. This makes our proposed algorithm scalable to large data sets, while preserving the statistical optimality, which concludes the divide-and-conquer algorithm as the best overall method among the three.

\section{Discussion}\label{sec:disc}

In this paper, the problem of ranking $n$ players from partial comparison data under the BTL model has been investigated. We have derived the minimax rate with respect to the Kendall's tau distance. A divide-and-conquer algorithm is proposed and is proved to achieve the minimax rate. In this section, we discuss a few directions along which the results of the paper can be extended.

An important condition that we impose throughout the paper is the regularity of the skill parameters $\theta^*\in\Theta_n(\beta,C_0)$. It assumes that $|\theta_i^*-\theta_j^*|\asymp \beta|i-j|$, which roughly describes that players with different skills are evenly distributed in the population. Without this condition, we conjecture that the minimax rate under the Kendall's tau loss should be
$$\inf_{\wh{r}\in\S_n}\sup_{r^*\in\S_n}\mathbb{E}_{(\theta^*,r^*)}\k(\wh{r},r^*)\asymp \frac{1}{n}\sum_{1\leq i<j\leq n}\exp\left(-\frac{(1+o(1))npL(\theta_{i}^*-\theta_{j}^*)^2}{2(V_{i}(\theta^*)+V_{j}(\theta^*))}\right).$$
In fact, this formula has already appeared in the upper bound analysis (\ref{eq:sum-exp}) and can be simplified to the result of Theorem \ref{thm:BTL-minimax} when $\theta^*\in\Theta_n(\beta,C_0)$. Extending the result of Theorem \ref{thm:BTL-minimax} beyond the condition $\theta^*\in\Theta_n(\beta,C_0)$ is possible by some necessary modifications of the league partition step described in Algorithm \ref{alg:partition}. Without $|\theta_i^*-\theta_j^*|\asymp \beta|i-j|$, the partition formula $S_k=\{i\in[n]\backslash(S_1\cup\cdots \cup S_{k-1}): w_i^{(k)}\leq h\}$ should be replaced by $S_k=\{i\in[n]\backslash(S_1\cup\cdots \cup S_{k-1}): w_i^{(k)}\leq h_k\}$ for some sequence $\{h_k\}$ to account for the non-regularity of $\theta^*$. Intuitively, the size of each $|S_k|$ should adaptively depend on the local density of the skill parameters in the neighborhood from which it is selected. Then, the major difficulty is to find a data-driven $\{\wh{h}_k\}$ that estimates the local density. When $|\theta_i^*-\theta_j^*|\asymp \beta|i-j|$, we can just use the global estimator (\ref{eq:global-h}). Without this assumption, estimating $\{h_k\}$ is a much harder problem. In \cite{jadbabaie2020estimation}, it is assumed that the skill parameters $\theta_1^*,\cdots,\theta_n^*$ are i.i.d drawn from some distribution $F$ instead of being fixed parameters, and the authors have studied the problem of estimating $F$, which is called the skill distribution, from the partial pairwise comparison data. Under this formulation, the estimation of the parameters $\{h_k\}$ can be linked to the problem of local bandwidth selection in kernel density estimation \citep{jones1996brief}. We leave this direction of research as one of our future projects.

A restriction of the BTL model is that it can only deal with pairwise comparison. One extension from pairwise comparison to multiple comparison is the popular Plackett-Luce model \cite{plackett1975analysis,luce2012individual}. Suppose there is a subset of $J$ players $S=\{i_1,i_2,\cdots,i_J\}$. Under the Plackett-Luce model, the probability that $j$ is selected among $S$ is given by the formula $\frac{\exp(\theta_j)}{\sum_{i\in S}\exp(\theta_i)}$. Statistical analysis of ranking under the Plackett-Luce model is a problem that has been rarely explored. Both the minimax rate and the construction of optimal algorithms are important open problems.

The ranking problem has also been studied under nonparametric comparison models. For example, a nonparametric stochastically transitive model was proposed by \cite{shah2016stochastically,shah2017simple} and the problems of estimating the mean matrix and top-$k$ ranking have been investigated. However, full ranking is still a problem that has not been well studied under nonparametric models. One of the few works that we are aware of is \cite{mao2018minimax} that assumes $\mathbb{P}(y_{ijl}=1)>\frac{1}{2}+\gamma$ when $r_i^*<r_j^*$. An investigation of full ranking under more general nonparametric settings is another direction to be explored.

\section{Proofs}\label{sec:pf}

\subsection{Proof of Theorem \ref{thm:Gaussian-minimax}}\label{sec:pf-G}

We prove Theorem \ref{thm:Gaussian-minimax} in this section. We first state and prove a few lemmas.

\begin{lemma}\label{lem:A-accurate}
Assume $p\geq\frac{c_0\log n}{n}$ for some sufficiently large $c_0>0$. Then, we have
\begin{equation}
\opnorm{A-\E(A)}\leq C\sqrt{np},\label{eq:A-op}
\end{equation}
\begin{equation}
\opnorm{D-\E(D)}\leq C\sqrt{np\log n}\label{eq:D-op}
\end{equation}
for some constant $C>0$ with probability at least $1-O(n^{-10})$.
\end{lemma}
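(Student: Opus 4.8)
The plan is to handle the two inequalities separately, since (\ref{eq:D-op}) reduces to scalar concentration of the vertex degrees while (\ref{eq:A-op}) is the sharp spectral-norm bound for a sparse Erd\H{o}s-R\'{e}nyi matrix.

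\emph{The bound (\ref{eq:D-op}).} Observe that $\mathbb{E}(D)=(n-1)p\,I_n$, so $D-\mathbb{E}(D)$ is diagonal with entries $D_{ii}-(n-1)p$ and $\opnorm{D-\mathbb{E}(D)}=\max_{i\in[n]}|D_{ii}-(n-1)p|$, where $D_{ii}=\sum_{j\neq i}A_{ij}\sim\mathrm{Binomial}(n-1,p)$. First I would apply Bernstein's inequality to each $D_{ii}$ (variance proxy $\le np$, increments bounded by $1$) to get $\mathbb{P}(|D_{ii}-(n-1)p|\ge t)\le 2\exp(-ct^2/(np+t))$; plugging in $t=C\sqrt{np\log n}$ and using $p\ge c_0\log n/n$ with $c_0$ large enough that $t\le np$, each tail is $\le 2n^{-C'}$ with $C'$ arbitrarily large once $C$ is large. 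A union bound over $i\in[n]$ then yields (\ref{eq:D-op}) with probability $1-O(n^{-10})$.

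\emph{The bound (\ref{eq:A-op}).} Write $\bar A=A-\mathbb{E}(A)$, a symmetric matrix with zero diagonal and independent upper-triangular entries $A_{ij}-p$ that are mean zero, bounded by $1$, and of variance at most $p$; in particular $\max_i\sum_{j}\mathbb{E}(\bar A_{ij}^2)\le np$. The plan has two steps. (i) Bound the expectation $\mathbb{E}\opnorm{\bar A}\lesssim\sqrt{np}+\sqrt{\log n}\asymp\sqrt{np}$, the last comparison using $np\ge c_0\log n$; this is exactly the sharp matrix-concentration estimate for symmetric random matrices with bounded independent entries (Bandeira--van Handel), and it can alternatively be obtained via the Füredi--Komlós trace method or simply quoted from the literature on the spectrum of $\mathcal{G}(n,p)$. (ii) Upgrade this to a tail bound: as a function of the independent coordinates $(A_{ij})_{1\le i<j\le n}\in[0,1]^{\binom n2}$, the map $A\mapsto\opnorm{\bar A}=\sup_{\norm x=1}|x^\top\bar A x|$ is a supremum of affine functions, hence convex, and it is $\sqrt2$-Lipschitz in the Euclidean metric, so Talagrand's convex concentration inequality gives $\mathbb{P}(\opnorm{\bar A}\ge\mathbb{E}\opnorm{\bar A}+t)\le 4\exp(-ct^2)$. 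Taking $t=C\sqrt{np}$ and using $np\ge c_0\log n$ makes this $O(n^{-10})$; combining with (i) gives $\opnorm{\bar A}\le C'\sqrt{np}$ with the stated probability.

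The main obstacle is step (i): obtaining $\sqrt{np}$ rather than $\sqrt{np\log n}$. A direct matrix Bernstein inequality, or an $\varepsilon$-net argument on the sphere, only delivers $\opnorm{\bar A}\lesssim\sqrt{np\log n}$, because for sparse graphs the extremal test vectors localize on the highest-degree vertices; removing the extra $\sqrt{\log n}$ requires either the refined bookkeeping of the trace method or the Bandeira--van Handel estimate. This is precisely the role of the assumption $p\ge c_0\log n/n$: it forces the maximum degree to be $\Theta(np)$, so that $\sqrt{np}$ is the correct scale (if $np=o(\log n)$ the bound is false). Since the spectral-norm bound for sparse Erd\H{o}s-R\'{e}nyi graphs is by now classical, the cleanest write-up is to cite it directly; the Talagrand step above is included mainly to record that only a bound in expectation needs to be imported.
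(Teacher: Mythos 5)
Your proof of (\ref{eq:D-op}) via Bernstein plus a union bound over $i\in[n]$ matches the paper's (the paper just says ``standard concentration of sums of i.i.d.\ Bernoulli random variables''). For (\ref{eq:A-op}) the paper takes a shortcut: it cites Theorem 5.2 of Lei and Rinaldo (2015), which is a ready-made high-probability bound $\opnorm{A-\E A}\leq C\sqrt{np}$ for inhomogeneous Erd\H{o}s--R\'{e}nyi adjacency matrices under exactly the density assumption $np\gtrsim\log n$, with the failure probability $n^{-r}$ tunable through $c_0$ and $C$. Your route is different and also correct: you split the task into an expectation bound $\E\opnorm{A-\E A}\lesssim\sqrt{np}+\sqrt{\log n}\asymp\sqrt{np}$ (Bandeira--van Handel, or Füredi--Komlós), and then a deviation bound around the expectation via Talagrand's convex-Lipschitz concentration, using that $\opnorm{\bar A}$ is a convex, $\sqrt2$-Lipschitz function of the $\binom n2$ independent bounded coordinates. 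The Talagrand step is sound (the gradient of $x^\top\bar A x$ in the upper-triangular coordinates has norm $\le\sqrt2$ uniformly over unit $x$), and the tail $\exp(-cC^2np)=O(n^{-10})$ indeed requires $np\gtrsim\log n$ as you note. What the paper's one-line citation buys is brevity; what your decomposition buys is transparency about where the $\log n$ disappears (in expectation, via the correct $\sigma$ versus $\sigma_*\sqrt{\log n}$ balance) and where the density assumption enters (in both the expectation bound and the tail exponent). You also correctly flag, in your last paragraph, that an $\varepsilon$-net or naive matrix-Bernstein argument would only give $\sqrt{np\log n}$, and that citing the known result directly is the cleanest write-up — which is exactly what the paper does.
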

\begin{proof}
Bound (\ref{eq:A-op}) is a direct consequence of Theorem 5.2 in \cite{lei2015consistency} and Bound (\ref{eq:D-op}) is from standard concentration of sums of i.i.d. Bernoulli random variables.
\end{proof}
\begin{lemma}\label{lem:Laplace-eigen}
Assume $p\geq\frac{c_0\log n}{n}$ for some sufficiently large $c_0>0$. Then, we have
$$np-2C\sqrt{np\log n}\leq\lambda_{\min,\perp}(\mathcal{L}_A)=\min_{u\neq 0, \mathds{1}_n^Tu=0}\frac{u^T\mathcal{L}_A u}{\norm{u}},$$
$$np + 2C\sqrt{np\log n}\geq\lambda_{\max,\perp}(\mathcal{L}_A)= \max_{u\neq 0, \mathds{1}_n^Tu=0}\frac{u^T\mathcal{L}_A u}{\norm{u}}$$
for some constant $C>0$ with probability at least $1-O(n^{-10})$.
\end{lemma}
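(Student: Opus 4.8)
\textbf{Proof proposal for Lemma \ref{lem:Laplace-eigen}.} The plan is to split $\mathcal{L}_A$ into its expectation plus a fluctuation, observe that the expectation is essentially a multiple of the identity on the subspace of interest, and bound the fluctuation via Lemma \ref{lem:A-accurate}. First I would record that $\E(A)=p(\mathds{1}_n\mathds{1}_n^T-I_n)$ and, since $D$ is diagonal with $D_{ii}=\sum_{j\neq i}A_{ij}$, that $\E(D)=(n-1)p\,I_n$. Hence $\E(\mathcal{L}_A)=\E(D)-\E(A)=np\,I_n-p\,\mathds{1}_n\mathds{1}_n^T$, and for any $u$ with $\mathds{1}_n^Tu=0$ the rank-one correction $p\,\mathds{1}_n\mathds{1}_n^T$ acts as zero, so $u^T\E(\mathcal{L}_A)u=np\norm{u}^2$.

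Next, for any $u\neq 0$ with $\mathds{1}_n^Tu=0$ I would write
$$u^T\mathcal{L}_A u = np\norm{u}^2 + u^T\big(D-\E(D)\big)u - u^T\big(A-\E(A)\big)u,$$
and bound the two fluctuation quadratic forms by $\opnorm{D-\E(D)}\norm{u}^2$ and $\opnorm{A-\E(A)}\norm{u}^2$ respectively. By Lemma \ref{lem:A-accurate}, on an event of probability at least $1-O(n^{-10})$ we have $\opnorm{A-\E(A)}\le C\sqrt{np}\le C\sqrt{np\log n}$ and $\opnorm{D-\E(D)}\le C\sqrt{np\log n}$ (the latter being exactly $\max_i|D_{ii}-(n-1)p|$, controlled by the cited Bernoulli concentration). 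Combining these gives, uniformly over such $u$,
$$\big| u^T\mathcal{L}_A u - np\norm{u}^2\big|\le 2C\sqrt{np\log n}\,\norm{u}^2,$$
and taking the infimum and supremum of the Rayleigh quotient over $\{u\neq 0:\mathds{1}_n^Tu=0\}$ yields both displayed inequalities (after renaming the constant).

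I do not expect a genuine obstacle here: the statement is, at bottom, Weyl's inequality applied to the restriction of $\mathcal{L}_A$ to $\mathds{1}_n^\perp$, with the perturbation size supplied by Lemma \ref{lem:A-accurate}. The only points needing a little care are that $D-\E(D)$ is diagonal so its operator norm is literally the maximal entrywise deviation (handled by the concentration already invoked), and that one must restrict to $\mathds{1}_n^\perp$ so that the $p\,\mathds{1}_n\mathds{1}_n^T$ term drops out and does not spoil the lower bound on $\lambda_{\min,\perp}(\mathcal{L}_A)$.
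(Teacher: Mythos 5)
Your proposal is correct and follows essentially the same route as the paper: decompose $\mathcal{L}_A = \E\mathcal{L}_A + (D-\E D) - (A-\E A)$, observe that $\E\mathcal{L}_A$ acts as $np\,I$ on $\mathds{1}_n^\perp$, and control the perturbation via the operator-norm bounds of Lemma \ref{lem:A-accurate} together with Weyl's inequality. The paper's proof is just a terser version of the same argument.
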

\begin{proof}
Note the decomposition
$$\mathcal{L}_A=\E\mathcal{L}_A+D-\E D-(A-\E A)$$
and $\lambda_{\min, \perp}(\E\mathcal{L}_A)=\lambda_{\max, \perp}(\E\mathcal{L}_A)=np$. By Lemma \ref{lem:A-accurate}, we have

$$\opnorm{D-\E D-(A-\E A)}\leq 2C\sqrt{np\log n}$$
with probability at least $1-O(n^{-10})$ for some $C>0$. The Lemma can be seen immediately by Weyl's inequality.
\end{proof}

We introduce another notation $r^{*(i,j)}\in\S_n$ to be the element in $\S_n$ having
\begin{align}\label{eqn:r_star_ij_def}
r_k^{*(i,j)}=
\begin{cases}
r_k^*, \text{ if }k\neq i,j \\
r_j^*,\text{ if }k=i\\
r_i^*,\text{ if }k=j\\
\end{cases}.
\end{align}
 That is, $r^{*(i,j)}$ is a permutation by swapping the $i,j$th position in $r^*$ while keeping other positions fixed. 

\begin{lemma}\label{lem:Gaussian-two-point}
Assume $\frac{np}{\log n}\to\infty$. There exists $\delta=o(1)$, such that for any $\theta^*\in\Theta_n(\beta, C_0)$, any $r^*\in\S_n$, any $i,j\in[n], i\neq j$, we have
\begin{align*}
&\inf_{\wh{r}}\frac{\p_{(\theta^*,\sigma^2, r^*)}\left(\wh{r}\neq r^*\right)+\p_{(\theta^*,\sigma^2, r^{*(i,j)})}\left(\wh{r}\neq r^{*(i,j)}\right)}{2}\\
&\gtrsim \min\left\{1, \sqrt{\frac{\sigma^2}{np(\theta_{r_i^*}^*-\theta_{r_j^*}^*)^2}}\exp\left(-\frac{(1+\delta)np(\theta_{r_i^*}^*-\theta_{r_j^*}^*)^2}{4\sigma^2}\right)\right\}
\end{align*}
\end{lemma}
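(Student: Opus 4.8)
This is a two-point (Le Cam) lower bound, and the quantity on the left is the average Bayes risk for the binary testing problem between $r^*$ and its transposition $r^{*(i,j)}$. The standard reduction is: for any estimator $\wh r$, the average of the two error probabilities is bounded below by the testing affinity, which in turn is controlled by the total variation distance — or, more conveniently here, by $\frac12 \int \sqrt{p_0 p_1}$ (the Hellinger affinity) between the two full data distributions. So the plan is: (1) write down the joint likelihood of $\{A_{k\ell}\}$ and $\{y_{k\ell}\}$ under $\mathbb{P}_{(\theta^*,\sigma^2,r^*)}$ and under $\mathbb{P}_{(\theta^*,\sigma^2,r^{*(i,j)})}$, observe that the graph $A$ has the same marginal under both, and that the Gaussian observations differ only in their means; (2) identify exactly which edges have a changed mean when we swap positions $i$ and $j$ — namely edges incident to $i$ or to $j$ but not the edge $(i,j)$ itself, and for such an edge $(i,k)$ the mean changes from $\theta^*_{r^*_i}-\theta^*_{r^*_k}$ to $\theta^*_{r^*_j}-\theta^*_{r^*_k}$; (3) compute the Hellinger affinity (or equivalently bound the chi-square / KL) of the product of these per-edge Gaussians conditionally on $A$, then take expectation over $A$.

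**Key computation.** Conditionally on $A$, the log-likelihood ratio is a sum over the ``changed'' edges of Gaussian log-likelihood ratios. For a single edge $(i,k)$ with $A_{ik}=1$ the two means differ by $\Delta_{ik} := (\theta^*_{r^*_i}-\theta^*_{r^*_k}) - (\theta^*_{r^*_j}-\theta^*_{r^*_k}) = \theta^*_{r^*_i}-\theta^*_{r^*_j}$, a quantity independent of $k$; similarly every changed edge incident to $j$ contributes a mean shift of the same magnitude $|\theta^*_{r^*_i}-\theta^*_{r^*_j}|$. The Hellinger affinity between two $\n(\mu_0,\sigma^2)$ and $\n(\mu_1,\sigma^2)$ is $\exp(-(\mu_0-\mu_1)^2/(8\sigma^2))$, so conditionally on $A$ the product affinity is $\exp\!\big(-\frac{(d_i+d_j-2A_{ij})(\theta^*_{r^*_i}-\theta^*_{r^*_j})^2}{8\sigma^2}\big)$ where $d_i,d_j$ are degrees. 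Taking expectation over $A$ (degrees concentrate at $(n-1)p$ by Lemma~\ref{lem:A-accurate}, or one can compute the moment generating function of the degrees exactly since they are sums of independent Bernoullis), the expected affinity is, up to $(1+o(1))$ in the exponent, $\exp\!\big(-\frac{(1+\delta)np(\theta^*_{r^*_i}-\theta^*_{r^*_j})^2}{4\sigma^2}\big)$. Here the condition $np/\log n\to\infty$ is exactly what makes the degree concentration tight enough to absorb into the $\delta=o(1)$ factor. The remaining $\sqrt{\sigma^2/(np(\theta^*_{r^*_i}-\theta^*_{r^*_j})^2)}$ prefactor is the improvement one gets from the second-order term in Le Cam: rather than the crude bound $\text{affinity} \le 1$, one uses that the likelihood-ratio statistic is asymptotically Gaussian with mean of order $np(\theta^*_{r^*_i}-\theta^*_{r^*_j})^2/\sigma^2$ and comparable variance, so the Bayes error is $\Phi(-\text{(standardized gap)}/2)$-ish, which carries the Gaussian-tail $1/(\text{SNR})$ factor. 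Concretely I would invoke a sharp small-deviation / Bahadur–Rao style bound: for a sum $S$ of $m\asymp np$ i.i.d.\ (conditionally on $A$) Gaussian log-likelihood-ratio increments, $\min_t [\mathbb{P}(S>t) + \mathbb{P}'(S\le t)] \gtrsim \frac{1}{\sqrt{\mathrm{Var}}}\exp(-\mathrm{KL}/?\ )$ with the exact constant $\tfrac14$ in the exponent coming from the midpoint $t=\tfrac12\mathbb{E}S$.

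**Main obstacle.** The genuinely delicate part is getting the \emph{constant} $1/4$ in the exponent together with the \emph{polynomial prefactor} $\sqrt{\sigma^2/(np\Delta^2)}$ simultaneously — a plain Hellinger or KL bound gives the right exponential order but either loses the prefactor or gives a worse constant. This requires a genuinely two-sided (both upper and lower) control of the distribution of the log-likelihood ratio. I would handle this by: fixing the "good" graph event $\mathcal{E}$ from Lemma~\ref{lem:A-accurate} on which $d_i+d_j-2A_{ij} \in [(1-o(1))2np, (1+o(1))2np]$; on $\mathcal{E}$ the likelihood ratio is \emph{exactly} a Gaussian random variable (linear functional of Gaussian data), so the minimax testing error between the two conditional distributions is \emph{exactly} $2\Phi(-\sqrt{(d_i+d_j-2A_{ij})}\,|\Delta|/(4\sigma))$, and then one uses the elementary Gaussian tail bound $\Phi(-x) \ge \frac{c}{x}e^{-x^2/2}$ for $x\ge 1$ (and $\Phi(-x)\gtrsim 1$ for $x\le 1$, which produces the $\min\{1,\cdot\}$). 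Averaging a conditional testing error against the graph distribution only helps the tester, so one must restrict to $\mathcal{E}$ and check $\mathbb{P}(\mathcal{E}) \ge 1 - o(1) \gg$ the target rate — which holds because the target rate is $\Omega(n^{-O(1)})$ is \emph{not} automatic, so one actually needs $\mathbb{P}(\mathcal{E}^c)$ negligible relative to the stated bound, i.e.\ $\mathbb{P}(\mathcal{E}^c)=O(n^{-10})$ suffices since the rate is bounded below by $\exp(-O(np\beta^2/\sigma^2))$ which, when it falls below $n^{-10}$, corresponds to exact recovery where the statement is trivial anyway. One small bookkeeping point: $\theta^*\in\Theta_n(\beta,C_0)$ is used only to ensure $|\theta^*_{r^*_i}-\theta^*_{r^*_j}|\le C_0\beta|i-j|$ stays $O(1)$-controlled relative to $\sigma$ when one wants the $o(1)$ in $\delta$ uniform — but for the single-pair statement as written, no regularity beyond $\theta^*\in\Theta_n(\beta,C_0)$ is needed since $i,j$ are fixed, so I would just carry the gap $\theta^*_{r^*_i}-\theta^*_{r^*_j}$ as an abstract parameter.
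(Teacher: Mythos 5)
Your approach is the same as the paper's: reduce via Neyman--Pearson to the likelihood-ratio test, condition on the graph $A$, observe that the log-likelihood ratio is exactly Gaussian in the Gaussian-noise model, restrict to the high-probability event where the degrees of $i$ and $j$ concentrate (Lemma \ref{lem:A-accurate}), and then apply the sharp Gaussian tail bound $\Phi(-x)\gtrsim x^{-1}e^{-x^2/2}$ to pick up both the polynomial prefactor and the exact constant in the exponent. One concrete slip: the testing error you quote, $2\Phi\bigl(-\sqrt{d_i+d_j-2A_{ij}}\,|\Delta|/(4\sigma)\bigr)$, is off by a factor of $2$ in the argument of $\Phi$. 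Conditionally on $A$, the sufficient statistic is a Gaussian mean shift over the $D_{ii}+D_{jj}+2A_{ij}$ relevant edge terms (the edge $(i,j)$ \emph{does} change, by twice the amount, so it enters with a $+2A_{ij}$, not $-2A_{ij}$), and the LR test at the midpoint gives Type I $=$ Type II $=\Phi\bigl(-\sqrt{D_{ii}+D_{jj}+2A_{ij}}\,|\Delta|/(2\sigma)\bigr)$. With your $4\sigma$ the exponent would be $np\Delta^2/(16\sigma^2)$ rather than the claimed $np\Delta^2/(4\sigma^2)$, so the constant matters here. Also, the side remark about needing $\mathbb{P}(\mathcal{E}^c)$ to be negligible relative to the target rate is unnecessary: since you are lower-bounding, restricting to the good event only costs a multiplicative factor $\mathbb{P}(\mathcal{E})\geq 1/2$, which the paper absorbs into the implicit constant; no comparison with the target rate is needed.
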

\begin{proof}
Assume $r_i^*=a<r_j^*=b$ and thus $\theta_a^*\geq\theta_b^*$. Let $\mathcal{F}$ be the event about $A$ on which Lemma \ref{lem:A-accurate} holds. We have $\pbr{\mathcal{F}}>1/2$.
To simplify notation, let $\p_A(\cdot)=\p_{(\theta^*,\sigma^2, r^*)}(\cdot|A)$ be the conditional probability. For any $A$, by Neyman-Pearson Lemma, the optimal procedure is given by the likelihood ratio test. Then
\begin{align}
\nonumber&\inf_{\wh{r}}\frac{\p_{(\theta^*,\sigma^2, r^*)}\left(\wh{r}\neq r^*\right)+\p_{(\theta^*,\sigma^2, r^{*(i,j)})}\left(\wh{r}\neq r^{*(i,j)}\right)}{2}\\
\nonumber&\geq \pbr{\mathcal{F}}\inf_{A\in\mathcal{F}}\p_{A}\left(\frac{d\p_{(\theta^*,\sigma^2, r^{*(i,j)})}}{d\p_{(\theta^*,\sigma^2, r^{*})}}\geq1\right)\\
\nonumber&\gtrsim\inf_{A\in\mathcal{F}}\p_{A}\left(-4A_{ij}(\theta_a^*-\theta_b^*+w_{ij})+\sum_{k\neq i,j}-A_{ik}(\theta_a^*-\theta_b^*+2w_{ik})+\sum_{k\neq i,j}A_{jk}(\theta_b^*-\theta_a^*+2w_{jk})\geq0\right)\\
\nonumber&=\inf_{A\in\mathcal{F}}\p_A\left(\mathn(0,\frac{\sigma^2}{D_{ii}+D_{jj}+2A_{ij}})\geq\frac{\abs{\theta_a-\theta_b}}{2}\right)\\
&\gtrsim\min\left\{1, \sqrt{\frac{\sigma^2}{np(\theta_a^*-\theta_b^*)^2}}\exp\left(-\frac{(1+\delta)np(\theta_a^*-\theta_b^*)^2}{4\sigma^2}\right)\right\}\label{eq:Gaussian_lower}
\end{align}
for some $\delta=o(1)$, where (\ref{eq:Gaussian_lower}) comes from standard Gaussian tail bound and Lemma \ref{lem:A-accurate}. 
\end{proof}

Now we are ready to state the proof of Theorem \ref{thm:Gaussian-minimax}.
\begin{proof}[Proof of Theorem \ref{thm:Gaussian-minimax}]
We prove the theorem for any $\theta^*\in\Theta_n(\beta, C_0)$.  Note that conditional on $A$, the solution of the least squares problem (\ref{eq:Gaussian-ls}) can be written as
$$\wh{\theta}=c\mathds{1}_n+\theta_{r^*}^*+Z,$$
where $\theta_{r^*}^*=(\theta_{r_1^*}^*,...,\theta_{r_n^*}^*)^T$, $Z\sim \n(0,\sigma^2\mathcal{L}_A^{\dagger})$ and $c\mathds{1}_n$ is a global shift of the skill parameters. Let $x_{ij}=e_i-e_j$ where $\{e_1,...,e_n\}$ are the standard basis of $\mathbb{R}^n$. Let $\mathcal{F}$ be the event about $A$ when Lemma \ref{lem:Laplace-eigen} holds. Then
\begin{align}
\nonumber&\E_{(\theta^*,\sigma^2, r^*)}\left[\k(\wh{r}, r^*)\right]=\frac{1}{n}\sum_{1\leq i<j\leq n}\p_{(\theta^*,\sigma^2, r^*)}\left(\sgn(\wh{r}_i-\wh{r}_j)\sgn(r_i^*-r_j^*)<0\right)\\
\nonumber&=\frac{1}{n}\sum_{1\leq i<j\leq n}\p_{(\theta^*,\sigma^2, r^*)}\left(\sgn(\wh{\theta}_i-\wh{\theta}_j)\sgn(r_i^*-r_j^*)>0\right)\\
\nonumber&\leq\frac{1}{n}\sum_{1\leq i<j\leq n}\sup_{A\in\mathcal{F}}\p\left(\mathn(0,\sigma^2x_{ij}^T\mathcal{L}_A^{\dagger}x_{ij})>\abs{\theta_{r_i^*}^*-\theta_{r_j^*}^*}|A\right)+O(n^{-9})\\
\nonumber&\leq\frac{1}{n}\sum_{1\leq i<j\leq n}\sup_{A\in\mathcal{F}}\min\left\{1,\sqrt{\frac{\sigma^2x_{ij}^T\mathcal{L}_A^{\dagger}x_{ij}}{2\pi(\theta_{r_i^*}^*-\theta_{r_j^*}^*)^2}}\exp\left(-\frac{(\theta_{r_i^*}^*-\theta_{r_j^*}^*)^2}{2\sigma^2x_{ij}^T\mathcal{L}_A^{\dagger}x_{ij}}\right)\right\}+O(n^{-9})\\
&\leq\frac{1}{n}\sum_{1\leq i<j\leq n}\min\left\{1,\sqrt{\frac{\sigma^2(np-2C\sqrt{np\log n})^{-1}}{\pi(\theta_{r_i^*}^*-\theta_{r_j^*}^*)^2}}\exp\left(-\frac{(np-2C\sqrt{np\log n})(\theta_{r_i^*}^*-\theta_{r_j^*}^*)^2}{4\sigma^2}\right)\right\}\label{eq:Laplace-spec}\\
\nonumber&\quad+O(n^{-9})\\
\nonumber&\lesssim\frac{1}{n}\sum_{1\leq i<j\leq n}\min\left\{1,\sqrt{\frac{\sigma^2}{np(\theta_{i}^*-\theta_{j}^*)^2}}\exp\left(-\frac{(1-\delta_1^\prime)np(\theta_{i}^*-\theta_{j}^*)^2}{4\sigma^2}\right)\right\}+n^{-9}
\end{align}
for some $\delta_1^\prime=o(1)$ independent of $\theta^*$, $\sigma^2$ and $r^*$, where (\ref{eq:Laplace-spec}) is due to Lemma \ref{lem:Laplace-eigen}. 

We first consider the high signal-to-noise ratio regime, where $\frac{np\beta^2}{\sigma^2}>1$. In this scenario,
\begin{align*}
&\sum_{1\leq i<j\leq n}\min\left\{1,\sqrt{\frac{\sigma^2}{np(\theta_{i}^*-\theta_{j}^*)^2}}\exp\left(-\frac{(1-\delta_1^\prime)np(\theta_{i}^*-\theta_{j}^*)^2}{4\sigma^2}\right)\right\}\\
&\leq\sum_{i=1}^{n-1}\sum_{j=i+1}^n\exp\left(-\frac{(1-\delta_1^\prime)np(\theta_{i}^*-\theta_{j}^*)^2}{4\sigma^2}\right)\\
&\leq\sum_{i=1}^{n-1}\exp\left(-\frac{(1-\delta_1^\prime)np(\theta_{i}^*-\theta_{i+1}^*)^2}{4\sigma^2}\right)\sum_{j=i+1}^n\exp\left(-\frac{(1-\delta_1^\prime)np[(\theta_{i}^*-\theta_{j}^*)^2-(\theta_{i}^*-\theta_{i+1}^*)^2]}{4\sigma^2}\right)\\
&\leq\sum_{i=1}^{n-1}\exp\left(-\frac{(1-\delta_1^\prime)np(\theta_{i}^*-\theta_{i+1}^*)^2}{4\sigma^2}\right)\sum_{j=i+1}^n\exp\left(-\frac{(1-\delta_1^\prime)np(j-i-1)\beta^2}{4\sigma^2}\right)\\
&\lesssim\sum_{i=1}^{n-1}\exp\left(-\frac{(1-\delta_1^\prime)np(\theta_{i}^*-\theta_{i+1}^*)^2}{4\sigma^2}\right)
\end{align*}
where the last inequality is due to summation of an exponentially decaying series. This gives the exponential rate in high signal-to-noise ratio regime. 

Now, when $\frac{np\beta^2}{\sigma^2}\leq 1$, 
\begin{align}
\nonumber&\sum_{1\leq i<j\leq n}\min\left\{1,\sqrt{\frac{\sigma^2}{np(\theta_{i}^*-\theta_{j}^*)^2}}\exp\left(-\frac{(1-\delta_1^\prime)np(\theta_{i}^*-\theta_{j}^*)^2}{4\sigma^2}\right)\right\}\\
\nonumber&\leq\sum_{i=1}^{n-1}\sum_{k\geq1}\sum_{\substack{j>i\\(k-1)\sqrt{\frac{\sigma^2}{np\beta^2}}<j-i\leq k\sqrt{\frac{\sigma^2}{np\beta^2}}}}\min\left\{1,\sqrt{\frac{\sigma^2}{np(\theta_i^*-\theta_j^*)^2}}\exp\left(-\frac{(1-\delta^\prime)np(\theta_i^*-\theta_j^*)^2}{4\sigma^2}\right)\right\}\\
\nonumber&\lesssim\sqrt{\frac{\sigma^2}{np\beta^2}}\sum_{i=1}^{n-1}\left(\sum_{k\geq0}\exp\left(-\frac{(1-\delta^\prime)k^2}{4}\right)\right)\lesssim n\sqrt{\frac{\sigma^2}{np\beta^2}}\wedge n^2
\end{align}
where the last inequality also comes from summing an exponentially decaying series and $n^2$ is a trivial upper bound. This finishes the proof of the upper bound.

Now we look at the lower bound. 
For any $r^*\in\S_n$, we have $r^{*(i,j)}\in\S_n$ defined as in (\ref{eqn:r_star_ij_def}). 
Then for any $\theta^*\in\Theta_n(\beta, C_0)$, 
\begin{align}
\nonumber&\inf_{\wh{r}}\sup_{r^*\in\S_n}\E_{(\theta^*, \sigma^2, r^*)}\left[\k(\wh{r}, r^*)\right]\\
\nonumber&\geq\inf_{\wh{r}}\frac{1}{n}\sum_{1\leq i<j\leq n}\frac{1}{n!}\sum_{r^*\in\S_n}\p_{(\theta^*,\sigma^2, r^*)}\left(\sgn(\wh{r}_i-\wh{r}_j)\sgn(r_i^*-r_j^*)<0\right)\\
\nonumber&=\inf_{\wh{r}}\frac{1}{n}\sum_{1\leq i<j\leq n}\frac{1}{n!}\sum_{1\leq a<b\leq n}\sum_{r^*:\{r_i^*,r_j^*\}=\{a,b\}}\p_{(\theta^*,\sigma^2, r^*)}\left(\sgn(\wh{r}_i-\wh{r}_j)\sgn(r_i^*-r_j^*)<0\right)\\
\nonumber&\geq \frac{1}{n}\sum_{1\leq i<j\leq n}\frac{2}{n(n-1)}\sum_{1\leq a<b\leq n}\frac{1}{(n-2)!}\sum_{r^*:r_i^*=a, r_j^*=b}\inf_{\wh{r}}\frac{\p_{(\theta^*,\sigma^2, r^*)}\left(\wh{r}_i\neq a\right)+\p_{(\theta^*,\sigma^2, r^{*(i,j)})}\left(\wh{r}_i\neq b\right)}{2}\\
\nonumber&\gtrsim\frac{1}{n}\sum_{1\leq i<j\leq n}\frac{2}{n(n-1)}\sum_{1\leq a<b\leq n}\\
&\quad\quad\quad\quad\quad\quad\frac{1}{(n-2)!}\sum_{r^*:r_i^*=a, r_j^*=b}\min\left\{1, \sqrt{\frac{\sigma^2}{np(\theta_a^*-\theta_b^*)^2}}\exp\left(-\frac{(1+\delta^\prime)np(\theta_a^*-\theta_b^*)^2}{4\sigma^2}\right)\right\}\label{eq:Gaussian-lower-main}\\
\nonumber&=\frac{1}{n}\sum_{1\leq a<b\leq n}\min\left\{1, \sqrt{\frac{\sigma^2}{np(\theta_a^*-\theta_b^*)^2}}\exp\left(-\frac{(1+\delta^\prime)np(\theta_a^*-\theta_b^*)^2}{4\sigma^2}\right)\right\}
\end{align}
for some $\delta^\prime=o(1)$, where (\ref{eq:Gaussian-lower-main}) comes from Lemma \ref{lem:Gaussian-two-point}. 

We still consider the high signal-to-noise ratio case first. \begin{align}
\nonumber&\sum_{1\leq i<j\leq n}\min\left\{1, \sqrt{\frac{\sigma^2}{np(\theta_i^*-\theta_j^*)^2}}\exp\left(-\frac{(1+\delta^\prime)np(\theta_i^*-\theta_j^*)^2}{4\sigma^2}\right)\right\}\\
\nonumber&\geq\sum_{i=1}^{n-1}\sqrt{\frac{\sigma^2}{np(\theta_i^*-\theta_{i+1}^*)^2}}\exp\left(-\frac{(1+\delta^\prime)np(\theta_i^*-\theta_{i+1}^*)^2}{4\sigma^2}\right)\\
&\gtrsim\sum_{i=1}^{n-1}\exp\left(-\frac{(1+\delta)np(\theta_i^*-\theta_{i+1}^*)^2}{4\sigma^2}\right)\label{eq:Gaussian-absorb}
\end{align}
where $\delta$ in (\ref{eq:Gaussian-absorb}) can be chosen arbitrarily small when $np\beta^2/\sigma^2>1$, which concludes the exponential lower bound.

For the polynomial lower bound when signal-to-noise ratio is small, 
\begin{align*}
&\sum_{1\leq i<j\leq n}\min\left\{1, \sqrt{\frac{\sigma^2}{np(\theta_i^*-\theta_j^*)^2}}\exp\left(-\frac{(1+\delta^\prime)np(\theta_i^*-\theta_j^*)^2}{4\sigma^2}\right)\right\}\\
&\gtrsim\sum_{i=1}^n\sum_{\substack{j\neq i\\\abs{j-i}\leq \sqrt{\frac{\sigma^2}{np\beta^2}}}}\min\left\{1,\sqrt{\frac{\sigma^2}{np(\theta_i-\theta_j)^2}}\exp\left(-\frac{(1+\delta^\prime)np(\theta_i-\theta_j)^2}{4\sigma^2}\right)\right\}\\
&\gtrsim\sum_{i=1}^nn\wedge \left(\sqrt{\frac{\sigma^2}{np\beta^2}}\right)
\end{align*}
which concludes the proof.
\end{proof}

\subsection{Proof of Theorem \ref{thm:BTL-minimax}}\label{sec:pf-BTL}

This section proves Theorem \ref{thm:BTL-minimax}. Since the upper bound part of the proof has already been given in Section \ref{sec:analysis}, we only need to establish the lower bound. First of all, we establish a few lemmas.

\begin{lemma}[Central limit theorem, Theorem 2.20 of \cite{ross2007second}]\label{lem:CLT-stein}
If $Z\sim \n(0,1)$ and $W=\sum_{i=1}^nX_i$ where $X_i$ are independent mean $0$ and $\Var(W)=1$, then
$$\sup_t\left|\mathbb{P}(W\leq t)-\mathbb{P}(Z\leq t)\right| \leq 2\sqrt{3\sum_{i=1}^n\left(\mathbb{E}X_i^4\right)^{3/4}}.$$
\end{lemma}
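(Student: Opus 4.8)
The statement is the classical Berry--Esseen-type bound for a normalized sum of independent mean-zero random variables, proved by Stein's method; in the text it is simply quoted from \cite{ross2007second}. To reconstruct it, the plan is to bound the Wasserstein distance between $W$ and $Z$ by a zero-bias coupling argument, then transfer this to a bound on the Kolmogorov distance, in three steps.

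First I would control the Wasserstein distance $d_{\mathrm W}(W,Z)$ via the zero-bias transformation. Writing $\sigma_i^2=\Var(X_i)$ so that $\sum_{i=1}^n\sigma_i^2=1$, I let $X_i^*$ have the $X_i$-zero-biased distribution --- characterized by $\sigma_i^2\,\mathbb{E}g'(X_i^*)=\mathbb{E}[X_i g(X_i)]$ for all absolutely continuous $g$ --- drawn independently of $(X_1,\dots,X_n)$; then I draw an independent index $I$ with $\mathbb{P}(I=i)=\sigma_i^2$ and set $W^*=W-X_I+X_I^*$. The key structural fact is that $W^*$ then has the $W$-zero-biased distribution, i.e. $\mathbb{E}[Wf(W)]=\mathbb{E}[f'(W^*)]$ for absolutely continuous $f$. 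Combining this with the Stein solution bound $\|f''\|_\infty\le 2$ for $1$-Lipschitz test functions $h$ (so that $\mathbb{E}h(W)-\mathbb{E}h(Z)=\mathbb{E}[f'(W)-f'(W^*)]$ and $|f'(W)-f'(W^*)|\le 2|X_I^*-X_I|$), taking expectations and supremizing over $h$ yields $d_{\mathrm W}(W,Z)\le 2\sum_{i=1}^n\sigma_i^2\,\mathbb{E}|X_i^*-X_i|$.

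Next I would do the moment bookkeeping. Applying the defining identity with $g(x)=\tfrac12 x|x|$ (so $g'(x)=|x|$) gives $\sigma_i^2\,\mathbb{E}|X_i^*|=\tfrac12\mathbb{E}|X_i|^3$, and $\sigma_i^2\,\mathbb{E}|X_i|\le(\mathbb{E}X_i^2)^{3/2}\le\mathbb{E}|X_i|^3$ by monotonicity of $L^p$-norms; hence $\sigma_i^2\,\mathbb{E}|X_i^*-X_i|\le\tfrac32\mathbb{E}|X_i|^3$, and $\|X_i\|_3\le\|X_i\|_4$ turns the Wasserstein bound into $d_{\mathrm W}(W,Z)\le 3\sum_{i=1}^n(\mathbb{E}X_i^4)^{3/4}$. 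Finally I would transfer to Kolmogorov distance: for $\epsilon>0$ sandwich $\mathbb{I}\{\cdot\le t\}$ between two $(1/\epsilon)$-Lipschitz functions agreeing with it off $(t,t+\epsilon)$, and use that the standard normal density is at most $1/\sqrt{2\pi}$, to get $\sup_t|\mathbb{P}(W\le t)-\mathbb{P}(Z\le t)|\le\epsilon^{-1}d_{\mathrm W}(W,Z)+\epsilon/\sqrt{2\pi}$; optimizing over $\epsilon$ gives $\sup_t|\mathbb{P}(W\le t)-\mathbb{P}(Z\le t)|\le\frac{2}{(2\pi)^{1/4}}\sqrt{d_{\mathrm W}(W,Z)}\le 2\sqrt{3\sum_{i=1}^n(\mathbb{E}X_i^4)^{3/4}}$, using $(2\pi)^{1/4}>1$, which is exactly the claim.

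The hard part will be Step 1: constructing the zero-bias coupling and verifying that the index mixture $W^*=W-X_I+X_I^*$ is genuinely the $W$-zero-bias variable --- this is the place where independence of the $X_i$'s and the normalization $\sum_i\sigma_i^2=1$ enter --- together with the sharp second-derivative bound on the Stein solution for Lipschitz test functions. Everything downstream is routine moment inequalities and a one-line optimization, so I expect no further obstacle.
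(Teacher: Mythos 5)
The paper does not prove this lemma; it quotes Theorem~2.20 of Ross's Stein's-method survey, whose argument is exactly the zero-bias coupling you reconstruct, so your proposal is both correct and the same route as the cited source. Your bookkeeping checks out at every step: $W^*=W-X_I+X_I^*$ with $\mathbb{P}(I=i)=\sigma_i^2$ is the $W$-zero-bias variable and, together with the Stein bound $\|f''\|_\infty\le 2$, gives $d_{\mathrm{W}}(W,Z)\le 2\sum_i\sigma_i^2\mathbb{E}|X_i^*-X_i|$; the choice $g(x)=\tfrac12 x|x|$ yields $\sigma_i^2\mathbb{E}|X_i^*|=\tfrac12\mathbb{E}|X_i|^3$ while $\sigma_i^2\mathbb{E}|X_i|\le\sigma_i^3\le\mathbb{E}|X_i|^3\le(\mathbb{E}X_i^4)^{3/4}$, giving $d_{\mathrm{W}}(W,Z)\le 3\sum_i(\mathbb{E}X_i^4)^{3/4}$; and optimizing the $\epsilon$-smoothing bound $\epsilon/\sqrt{2\pi}+\epsilon^{-1}d_{\mathrm{W}}$ produces the Kolmogorov bound $2(2\pi)^{-1/4}\sqrt{d_{\mathrm{W}}}\le 2\sqrt{d_{\mathrm{W}}}$, which is the stated inequality.
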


\begin{lemma}\label{lem:A-bern-2}
Assume $p\geq c_0\frac{\log n}{n}$ for some sufficiently large constant $c_0>0$. For any fixed $\{w_{ijk}\}$, $i,j\in[n], k\in\mathbb{K}$ where $\mathbb{K}$ is a discrete set with cardinality at most $n^{c_1}$ for some constant $c_1>0$. Assume $\max_{i,j\in[n], k\in\mathbb{K}}\abs{w_{ijk}}\leq c_2$ and 
$$p\min_{i\in[n],k\in\mathbb{K}}\sum_{j\in[n]\backslash\{i\}}w_{ijk}^2\geq c_3\log n$$
for some constants $c_2, c_3>0$. Then there exists constants $C_1, C_2>0$, such that for any $i\in[n]$,
$$\max_{k\in\mathbb{K}}\sum_{j\in[n]\backslash\{i\}}(A_{ij}-p)w_{ijk}\leq C_1\sqrt{p\log n\max_{k\in\mathbb{K}}\sum_{j\in[n]}w_{ijk}^2}$$
with probability at least $1-C_2n^{-10}$.
\end{lemma}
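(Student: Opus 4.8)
The plan is to apply Bernstein's inequality to the sum $\sum_{j\in[n]\backslash\{i\}}(A_{ij}-p)w_{ijk}$ for each fixed pair $(i,k)$, and then take a union bound over $k\in\mathbb{K}$. Fix $i\in[n]$ and $k\in\mathbb{K}$, and set $X_j=(A_{ij}-p)w_{ijk}$ for $j\in[n]\backslash\{i\}$. These are independent, mean-zero random variables with $|X_j|\leq|w_{ijk}|\leq c_2$ and $\Var(X_j)=p(1-p)w_{ijk}^2\leq p\,w_{ijk}^2$, so $\sum_{j\neq i}\Var(X_j)\leq p\sum_{j\in[n]\backslash\{i\}}w_{ijk}^2=:\sigma_{ik}^2$. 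Bernstein's inequality then gives, for every $t>0$,
$$\mathbb{P}\left(\sum_{j\in[n]\backslash\{i\}}X_j>t\right)\leq\exp\left(-\frac{t^2/2}{\sigma_{ik}^2+c_2t/3}\right).$$

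Next I would take the deviation level $t=C_1\sqrt{p\log n\,\big(\max_{k\in\mathbb{K}}\sum_{j\in[n]}w_{ijk}^2\big)}$ and write $\bar\sigma_i^2:=p\max_{k\in\mathbb{K}}\sum_{j\in[n]}w_{ijk}^2$, so that $\sigma_{ik}^2\leq\bar\sigma_i^2$ for all $k$, while the lower-bound hypothesis $p\sum_{j\neq i}w_{ijk}^2\geq c_3\log n$ forces $\bar\sigma_i^2\geq c_3\log n$. The one point that genuinely needs checking — and it is the only real content of the proof — is that this choice of $t$ keeps us in the variance-dominated (sub-Gaussian) regime of the Bernstein bound. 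Indeed, $t=C_1\sqrt{\bar\sigma_i^2\log n}\leq(C_1/\sqrt{c_3})\,\bar\sigma_i^2$, so $c_2t/3\leq(c_2C_1/(3\sqrt{c_3}))\,\bar\sigma_i^2$ and hence $\sigma_{ik}^2+c_2t/3\leq(1+c_2C_1/(3\sqrt{c_3}))\,\bar\sigma_i^2$; substituting $t^2/2=(C_1^2/2)\bar\sigma_i^2\log n$ shows the exponent is at least $\tfrac{(C_1^2/2)\log n}{1+c_2C_1/(3\sqrt{c_3})}$, which grows linearly in $C_1$. Choosing $C_1$ large enough, depending only on $c_1,c_2,c_3$, therefore makes the exponent at least $(c_1+12)\log n$, giving $\mathbb{P}\big(\sum_{j\neq i}X_j>t\big)\leq n^{-(c_1+12)}$.

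Finally, a union bound over the at most $n^{c_1}$ values of $k$ yields $\mathbb{P}\big(\max_{k\in\mathbb{K}}\sum_{j\in[n]\backslash\{i\}}(A_{ij}-p)w_{ijk}>t\big)\leq n^{c_1}\cdot n^{-(c_1+12)}=n^{-12}\leq C_2 n^{-10}$ with $C_2=1$, which is exactly the asserted conclusion. I do not anticipate a genuine obstacle here: the argument is a standard Bernstein-plus-union-bound, and the subtlety is purely bookkeeping — verifying that the prescribed $t$ lands in the sub-Gaussian branch of Bernstein's inequality, which is where (and the only place where) the hypothesis $p\min_{i,k}\sum_{j\neq i}w_{ijk}^2\gtrsim\log n$ is used; the condition $p\geq c_0\log n/n$ plays no further role in this particular lemma.
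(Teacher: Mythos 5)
Your proof is correct and takes essentially the same route as the paper's: Bernstein's inequality for each fixed $k$ followed by a union bound over $\mathbb{K}$, with the hypothesis $p\min_{i,k}\sum_{j\neq i}w_{ijk}^2\gtrsim\log n$ used exactly where you use it, to keep the deviation level in the variance-dominated branch of Bernstein so that the exponent scales linearly in $C_1$. The paper's version simply folds the bookkeeping you spell out into the single line ``$\leq n^{c_1}\exp(-C_1'^2\log n/C_2')$.''
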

\begin{proof}
For any constant $C_1^{\prime}>0$, by Bernstein's inequality, we have
\begin{align*}
&\p\left(\max_{k\in\mathbb{K}}\sum_{j\in[n]\backslash\{i\}}(A_{ij}-p)w_{ijk}> C_1^{\prime}\sqrt{p\log n\max_{k\in\mathbb{K}}\sum_{j\in[n]}w_{ijk}^2}\right)\\
&\leq\abs{\mathbb{K}}\max_{k\in\mathbb{K}}\exp\left(-\frac{C_1^{\prime2}p\log n\max_{k\in\mathbb{K}}\sum_{j\in[n]}w_{ijk}^2}{2p\sum_{j\in[n]\backslash\{i\}}w_{ijk}^2+\frac{2}{3}\max_{i,j\in[n], k\in\mathbb{K}}\abs{w_{ijk}}C_1^{\prime}\sqrt{p\log n\max_{k\in\mathbb{K}}\sum_{j\in[n]}w_{ijk}^2}}\right)\\
&\leq n^{c_1}\exp\left(-\frac{C_1^{\prime2}}{C_2^{\prime}}\log n\right)
\end{align*}
for some constant $C_2^{\prime}>0$. Thus we can set $C_1^{\prime}$ large enough to make the theorem holds.
\end{proof}

\begin{lemma}\label{lem:sum-psi-prime}
Assume $1\leq C_0=O(1)$ and $0<\beta=o(1)$. For any constant $\alpha>0$, there exists constants $C_1, C_2>0$ such that for any $\theta\in\Theta_n(\beta, C_0)$,
$$C_1\frac{1}{\beta\vee1/n}\leq\inf_{\theta_0\in[\theta_n, \theta_1]}\sum_{i=1}^n\psi^{\prime}(\theta_0-\theta_i)^{\alpha}\leq\sup_{\theta_0\in[\theta_n, \theta_1]}\sum_{i=1}^n\psi^{\prime}(\theta_0-\theta_i)^{\alpha}\leq C_2\frac{1}{\beta\vee1/n}$$
for $n$ large enough.
\end{lemma}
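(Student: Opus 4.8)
The plan is to prove both inequalities by an elementary counting argument, using only a few crude facts about $\psi'$: it is symmetric, satisfies $0<\psi'(t)\le \tfrac14$, is decreasing in $|t|$, and obeys $\psi'(t)\le e^{-|t|}$ (since $(e^{t/2}+e^{-t/2})^2\ge e^{|t|}$). Hence $\psi'(t)^\alpha\le e^{-\alpha|t|}\wedge(1/4)^\alpha$ for every $t$, while $\psi'(t)^\alpha\ge \psi'(1)^\alpha=:c_\alpha>0$ whenever $|t|\le 1$. The whole mechanism is that, for a regular configuration $\theta\in\Theta_n(\beta,C_0)$, the coordinates $\theta_1\ge\cdots\ge\theta_n$ have consecutive gaps between $\beta$ and $C_0\beta$, so the number of them within unit distance of any reference point $\theta_0\in[\theta_n,\theta_1]$ is of order $\min\{1/\beta,\,n\}=(\beta\vee n^{-1})^{-1}$, and those are precisely the coordinates that contribute non-negligibly to $\sum_i\psi'(\theta_0-\theta_i)^\alpha$. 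All estimates below will be uniform in $\theta_0\in[\theta_n,\theta_1]$ and in $\theta\in\Theta_n(\beta,C_0)$, so the $\inf$ and $\sup$ in the statement come for free.

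For the upper bound I would fix $\theta_0$ and split the sum over the shells $\{i:\,|\theta_0-\theta_i|\in[m,m+1)\}$ for $m=0,1,2,\dots$. Because the pairwise gaps are at least $\beta$, each shell (a union of two length-one intervals) contains at most $O(1/\beta)$ indices, and on the $m$th shell $\psi'(\theta_0-\theta_i)^\alpha\le e^{-\alpha m}$; summing the geometric series in $m$ gives $\sum_i\psi'(\theta_0-\theta_i)^\alpha\le C(\alpha)/\beta$. Combining this with the trivial bound $\sum_i\psi'(\theta_0-\theta_i)^\alpha\le (1/4)^\alpha n$ yields $\sum_i\psi'(\theta_0-\theta_i)^\alpha\lesssim\min\{n,1/\beta\}=(\beta\vee n^{-1})^{-1}$, which is the claimed $C_2$ bound.

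For the lower bound I would put $L_0=\min\{1,\,\theta_1-\theta_n\}$ and choose an interval $J\ni\theta_0$ with $|J|=L_0$ and $J\subseteq[\theta_n,\theta_1]$; this is possible because $L_0\le\theta_1-\theta_n$ (concretely, take $J$ flush with whichever endpoint of $[\theta_n,\theta_1]$ is nearer to $\theta_0$). Since consecutive coordinates of $\theta$ differ by at most $C_0\beta$, and $L_0\ge 2C_0\beta$ for $n$ large (using $L_0\ge\min\{1,\beta(n-1)\}$ together with $\beta=o(1)$), the interval $J$ contains at least $L_0/(2C_0\beta)$ of the $\theta_i$'s, each at distance $\le|J|=L_0\le1$ from $\theta_0$, hence each contributing at least $c_\alpha$. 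Therefore $\sum_i\psi'(\theta_0-\theta_i)^\alpha\ge c_\alpha\,L_0/(2C_0\beta)$, and since $L_0\ge\min\{1,\beta(n-1)\}$ this is $\gtrsim\min\{1/\beta,\,n\}=(\beta\vee n^{-1})^{-1}$, giving the $C_1$ bound.

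The argument is short, and the only point that needs genuine care is reconciling the two regimes $\beta\gtrsim n^{-1}$ and $\beta\lesssim n^{-1}$ through the single quantity $\beta\vee n^{-1}$ — in particular the boundary case where $\theta_0$ sits near an endpoint of $[\theta_n,\theta_1]$ while the full range $\theta_1-\theta_n$ is itself only of order one — which is exactly what the one-sided choice of the interval $J$ is designed to absorb; the remaining inequalities are routine geometric-series estimates. Finally, specializing to $\alpha=1$ and discarding the single omitted term $\psi'(0)$ turns this lemma into the two-sided bound $C_1(\beta\vee n^{-1})\le V_i(\theta^*)/n\le C_2(\beta\vee n^{-1})$ quoted after Theorem \ref{thm:BTL-minimax}, since $\theta_i^*\in[\theta_n^*,\theta_1^*]$ for every $i$.
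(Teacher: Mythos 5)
Your proof is correct and follows essentially the same route as the paper's: the paper introduces the shells $R_\theta(x,t,t+1)=\{i:t\le|\theta_i-x|<t+1\}$, observes (without detailed argument) that $|R_\theta(x,0,1)|\gtrsim(\beta\vee n^{-1})^{-1}$ and $\sup_t|R_\theta(x,t,t+1)|\lesssim(\beta\vee n^{-1})^{-1}$, then sums the geometric series in $t$ for the upper bound and restricts to the $t=0$ shell for the lower bound, exactly as you do. The only difference is cosmetic: where the paper asserts the shell-cardinality bounds as self-evident, you spell out the lower bound by constructing the one-sided interval $J$ of length $L_0=\min\{1,\theta_1-\theta_n\}$, which handles the endpoint and small-range cases explicitly but proves the same fact.
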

\begin{proof}
Define 
\begin{equation}
R_\theta(x, t_1, t_2)=\{i:t_1\leq\abs{\theta_i-x}< t_2\}\label{eq:neighbor-set}
\end{equation}
It is easy to see that there exist constants $C_1^{\prime}, C_2^{\prime}>0$ such that for any  $\theta\in\Theta_n(\beta, C_0)$,
\begin{equation}
\frac{C_1^{\prime}}{\beta\vee1/n}\leq\inf_{x\in[\theta_n, \theta_1]}|R_{\theta}(x, 0, 1)|\label{eq:R-lower}
\end{equation}
and
\begin{equation}
\sup_{t\in\mathbb{N}}\sup_{x\in[\theta_n, \theta_1]}|R_{\theta}(x, t, t+1)|\leq\frac{C_2^{\prime}}{\beta\vee1/n}\label{eq:R-upper}
\end{equation}
Thus
\begin{align*}
&\inf_{\theta_0\in[\theta_n, \theta_1]}\sum_{i=1}^n\psi^{\prime}(\theta_0-\theta_i)^{\alpha}\geq\inf_{\theta_0\in[\theta_n, \theta_1]}\sum_{i\in R_\theta(\theta_0, 0,1)}\psi^{\prime}(\theta_0-\theta_i)^{\alpha}\\
&=\inf_{\theta_0\in[\theta_n, \theta_1]}\sum_{i\in R_\theta(\theta_0, 0,1)}\left[\frac{e^{\theta_0-\theta_i}}{\left(1+e^{\theta_0-\theta_i}\right)^2}\right]^{\alpha}\geq\inf_{\theta_0\in[\theta_n, \theta_1]}\sum_{i\in R_\theta(\theta_0, 0,1)}\frac{1}{4^{\alpha}}e^{-\alpha\abs{\theta_0-\theta_i}}\\
&\geq\inf_{\theta_0\in[\theta_n, \theta_1]}\abs{R_\theta(\theta_0, 0,1)}\frac{1}{4^{\alpha}}e^{-\alpha}\geq\frac{C_3^{\prime}}{\beta\vee1/n}
\end{align*}
for some constant $C_3^\prime>0$. On the other hand, 
\begin{align*}
&\sup_{\theta_0\in[\theta_n, \theta_1]}\sum_{i=1}^n\psi^{\prime}(\theta_0-\theta_i)^{\alpha}=\sup_{\theta_0\in[\theta_n, \theta_1]}\sum_{t\geq0}\sum_{i\in R_{\theta}(\theta_0, t, t+1)}\psi^{\prime}(\theta_0-\theta_i)^{\alpha}\\
&\leq\sup_{\theta_0\in[\theta_n, \theta_1]}\sum_{t\geq0}\sum_{i\in R_{\theta}(\theta_0, t, t+1)}e^{-\alpha\abs{\theta_0-\theta_i}}\leq\sup_{\theta_0\in[\theta_n, \theta_1]}\sum_{t\geq0}\abs{R_\theta(\theta_0, t,t+1)}e^{-\alpha t}\\
&\leq\frac{C_4^{\prime}}{\beta\vee1/n}
\end{align*}
for some constant $C_4^{\prime}>0$, which concludes the proof.
\end{proof}

\begin{lemma}\label{lem:sup-A-u}
Assume $p\geq c_0(\beta\vee\frac{1}{n})\log n$ for some sufficiently large constant $c_0>0$ and $1\leq C_0=O(1)$. For any constant $\alpha>0$, there exist constants $C_1, C_2, C_3>0$ such that for any $r\in\S_n, i\neq j\in[n]$, and $\theta\in\Theta_n(\beta, C_0)$,
\begin{equation}
\inf_{u\in[0,1]}\sum_{k\neq i,j}A_{ik}\psi^{\prime}(u\theta_{r_i}+(1-u)\theta_{r_j}-\theta_{r_k})^{\alpha}\geq C_1\frac{p}{\beta\vee1/n}\label{eq:inf-A-u}
\end{equation}
and
\begin{equation}
\sup_{u\in[0,1]}\sum_{k\neq i,j}A_{ik}\psi^{\prime}(u\theta_{r_i}+(1-u)\theta_{r_j}-\theta_{r_k})^{\alpha}\leq C_2\frac{p}{\beta\vee1/n}\label{eq:sup-A-u}
\end{equation}
with probability at least $1-O(n^{-10})$ for $n$ large enough.
\end{lemma}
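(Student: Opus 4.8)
The plan is to fix the permutation $r\in\S_n$, the indices $i\ne j$, and the skill vector $\theta\in\Theta_n(\beta,C_0)$ (the stated probability refers only to the randomness of $A$), abbreviate $c_u=u\theta_{r_i}+(1-u)\theta_{r_j}$ and $S(u)=\sum_{k\ne i,j}A_{ik}\psi'(c_u-\theta_{r_k})^{\alpha}$, and prove the single statement that $S(u)\asymp p/(\beta\vee n^{-1})$ holds simultaneously for all $u\in[0,1]$ with probability $1-O(n^{-10})$; the two displayed inequalities are then immediate. First I would compute the mean: since $c_u$ is a convex combination of $\theta_{r_i}$ and $\theta_{r_j}$ it lies in $[\theta_n,\theta_1]$, so Lemma~\ref{lem:sum-psi-prime} applied at the point $c_u$ gives $\sum_{k=1}^n\psi'(c_u-\theta_{r_k})^{\alpha}\asymp 1/(\beta\vee n^{-1})$ uniformly in $u$; because $\psi'\le 1/4$ and $1/(\beta\vee n^{-1})\ge c_0\log n\to\infty$, deleting the two terms $k=i,j$ is harmless, and hence $\mu(u):=\E S(u)=p\sum_{k\ne i,j}\psi'(c_u-\theta_{r_k})^{\alpha}\asymp p/(\beta\vee n^{-1})$ uniformly in $u$.

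Next I would establish pointwise concentration and then pass to a net. For a fixed $u$, Bernstein's inequality applied to $S(u)-\mu(u)$ uses $\Var(S(u))=p(1-p)\sum_{k\ne i,j}\psi'(c_u-\theta_{r_k})^{2\alpha}\le (1/4)^{\alpha}\mu(u)$ and summands bounded by $(1/4)^{\alpha}$; since the target deviation $\tfrac12\mu(u)\asymp p/(\beta\vee n^{-1})$ dominates both the variance scale and the boundedness scale, the Bernstein exponent is $\gtrsim p/(\beta\vee n^{-1})\ge c_0\log n$, so $\Prob\big(|S(u)-\mu(u)|\ge\tfrac12\mu(u)\big)\le 2n^{-c'c_0}$, which is at most $n^{-14}$ once $c_0$ is chosen large enough in terms of $\alpha$ (this step may equivalently be routed through Lemma~\ref{lem:A-bern-2}). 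I would then union-bound over the net $\mathcal U=\{\ell/N:0\le\ell\le N\}$ with $N=\lceil n^{2}\rceil$ (and, if a version uniform in $(i,j)$ is desired, over the $O(n^2)$ pairs as well), which costs only a polynomial factor and keeps the failure probability $O(n^{-10})$. To transfer from $\mathcal U$ to all of $[0,1]$, differentiate: $\frac{d}{du}\psi'(c_u-\theta)^{\alpha}=(\theta_{r_i}-\theta_{r_j})\alpha\psi'(c_u-\theta)^{\alpha-1}\psi''(c_u-\theta)$ and $|\psi''|=\psi'|1-2\psi|\le\psi'$, so $|S'(u)|\le|\theta_{r_i}-\theta_{r_j}|\,\alpha\,S(u)$; moreover $\theta\in\Theta_n(\beta,C_0)$ gives $|\theta_{r_i}-\theta_{r_j}|\le C_0\beta n$, and the assumption $p\ge c_0(\beta\vee n^{-1})\log n$ together with $p\le 1$ forces $\beta\le 1/(c_0\log n)$, whence $C_0\alpha\beta n/N\to 0$. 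Integrating this differential inequality over a net interval gives $S(u)=(1+o(1))S(u_\ast)$ for $u_\ast$ the nearest net point; combining with the net bound and $\mu(u_\ast)\asymp p/(\beta\vee n^{-1})$ yields $C_1\,p/(\beta\vee n^{-1})\le S(u)\le C_2\,p/(\beta\vee n^{-1})$ for every $u\in[0,1]$, with $C_3$ absorbing the constant hidden in the probability bound.

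The hard part, such as it is, will be the uniformity over the continuum $u\in[0,1]$: an additive Lipschitz estimate would carry the a priori large factor $n\beta\cdot\sup_u S(u)$, so the key is to use the multiplicative, Gronwall-type bound above, which becomes negligible once $n\beta/N\to0$ for a polynomially fine net. I do not expect to need --- nor can one have --- uniformity over $r\in\S_n$ for the lower inequality, since an adversarial permutation could place all of player~$i$'s neighbors at skill levels far from the segment between $\theta_{r_i}$ and $\theta_{r_j}$, pushing $S(u)$ below any fixed multiple of $p/(\beta\vee n^{-1})$; fixing $r$ at the outset avoids this and is exactly what the downstream applications require, where the lemma is invoked at a fixed (true) permutation.
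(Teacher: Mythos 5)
Your proposal is correct and follows essentially the same route as the paper: both compute the mean via Lemma~\ref{lem:sum-psi-prime}, concentrate at discretization points via Bernstein (which the paper packages as Lemma~\ref{lem:A-bern-2}), and control the discretization error using $|\psi''|\leq\psi'$. The only difference is in the transfer step from the net to the continuum: the paper first separates $A_{ik}=p+(A_{ik}-p)$, then bounds the fluctuation of the stochastic part additively (using $|A_{ik}-p|\leq 1$ so that the mean-value-theorem error becomes $\alpha\,|u-u_*|\,|\theta_{r_i}-\theta_{r_j}|\sup_\xi\sum_k\psi'(\cdot)^\alpha=O(1)$, negligible against $p/(\beta\vee n^{-1})\gg\log n$), whereas you keep $A_{ik}$ inside the sum and convert the same derivative bound into the multiplicative Gronwall inequality $|S'(u)|\leq\alpha|\theta_{r_i}-\theta_{r_j}|\,S(u)$, yielding $S(u)=(1+o(1))S(u_*)$ directly. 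Your reorganization handles the upper and lower bounds symmetrically in one stroke and avoids the a~priori appeal to $\sup_u S(u)$ that you rightly flag as potentially lossy; the paper circumvents the same danger by replacing $|A_{ik}-p|$ by $1$ before differentiating. Both are valid, and your net size $n^2$ is overkill (the paper's net of size $n$ suffices for either argument since $\beta=o(1)$), but that is harmless.
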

\begin{proof}
We remark that $p\geq c_0(\beta\vee\frac{1}{n})\log n$ necessarily implies $0<\beta=o(1)$. We only give the proof of (\ref{eq:sup-A-u}). The inf part (\ref{eq:inf-A-u}) can be proved similarly. For (\ref{eq:sup-A-u}),
\begin{align}
\nonumber&\sup_{u\in[0,1]}\sum_{k\neq i,j}A_{ik}\psi^{\prime}(u\theta_{r_i}+(1-u)\theta_{r_j}-\theta_{r_k})^{\alpha}\\
&\leq\frac{C_1^{\prime}p}{\beta\vee1/n}+\sup_{u\in[0,1]}\sum_{k\neq i,j}(A_{ik}-p)\psi^{\prime}(u\theta_{r_i}+(1-u)\theta_{r_j}-\theta_{r_k})^{\alpha}\label{eq:use-sum-psi-prime}
\end{align}
for some constant $C_1^{\prime}>0$, where (\ref{eq:use-sum-psi-prime}) uses Lemma \ref{lem:sum-psi-prime}. To bound the second term in (\ref{eq:use-sum-psi-prime}), we use standard discretization technique. Let $u_a=\frac{a}{n}, a\in[n]$.   Then for any $u\in[0,1]$, let $a(u)=\arg\min_{a\in[n]}\abs{u-u_a}$. We have $\abs{u-u_{a(u)}}\leq1/n$. Observe that for any $u\in[0,1]$, 
\begin{align}
\nonumber&\abs{\sum_{k\neq i,j}(A_{ik}-p)\left(\psi^{\prime}(u\theta_{r_i}+(1-u)\theta_{r_j}-\theta_{r_k})^{\alpha}-\psi^{\prime}(u_{a(u)}\theta_{r_i}+(1-u_{a(u)})\theta_{r_j}-\theta_{r_k})^{\alpha}\right)}\\
&\leq\alpha\sup_{\xi\in[u\wedge u_{a(u)},u\vee u_{a(u)}]}\sum_{k\neq i,j}\psi^{\prime}(\xi\theta_{r_i}+(1-\xi)\theta_{r_j}-\theta_{r_k})^{\alpha}\abs{u-u_{a(u)}}\abs{\theta_{r_i}-\theta_{r_j}}\label{eq:psi-pp-bound}\\
&\leq\frac{C_2^\prime n\beta}{n}\frac{1}{\beta\vee1/n}\leq C_2^{\prime}\frac{p}{\beta\vee1/n}\label{eq:psi-diff-bound}
\end{align}
for some constant $C_2^{\prime}>0$, where (\ref{eq:psi-pp-bound}) is due to mean value theorem and $\abs{\psi^{\prime\prime}(x)}\leq\psi^{\prime}(x)$ while (\ref{eq:psi-diff-bound}) comes from Lemma \ref{lem:sum-psi-prime}. Therefore,
\begin{align}
\nonumber&\sup_{u\in[0,1]}\sum_{k\neq i,j}A_{ik}\psi^{\prime}(u\theta_{r_i}+(1-u)\theta_{r_j}-\theta_{r_k})^{\alpha}\\
&\leq\frac{C_3^{\prime}p}{\beta\vee1/n}+\max_{a\in[n]}\sum_{k\neq i,j}(A_{ik}-p)\psi^{\prime}(u_a\theta_{r_i}+(1-u_a)\theta_{r_j}-\theta_{r_k})^{\alpha}\label{eq:log-dominate}\\
&\leq\frac{C_3^{\prime}p}{\beta\vee1/n}+C_4^\prime\sqrt{p\log n\max_{a\in[n]}\sum_{k\neq i,j}\psi^{\prime}(u_a\theta_{r_i}+(1-u_a)\theta_{r_j}-\theta_{r_k})^{2\alpha}}\label{eq:simple-concentration}\\
&\leq\frac{C_5^\prime p}{\beta\vee1/n}\label{eq:bound-sum-psi-pp}
\end{align}
for some constants $C_3^\prime, C_4^\prime, C_5^\prime>0$ with probability at least $1-O(n^{-10})$, where (\ref{eq:log-dominate}) is due to (\ref{eq:psi-diff-bound}) and $\frac{p}{\beta\vee1/n}\gtrsim\log n\gg1$. (\ref{eq:simple-concentration}) comes from Lemma \ref{lem:sum-psi-prime}, $\abs{\psi^{\prime}(x)}\leq1/4$ and Lemma \ref{lem:A-bern-2}. (\ref{eq:bound-sum-psi-pp}) is a consequence of Lemma \ref{lem:sum-psi-prime} and $\log n\lesssim\frac{p}{\beta\vee1/n}$, which concludes the proof.
\end{proof}

To proceed with our proof for the lower bound, we define
\begin{align}
G_{i,j,k,\theta,r}(u)=\log\frac{(1+e^{\theta_{r_i}-\theta_{r_k}})^u(1+e^{\theta_{r_j}-\theta_{r_k}})^{1-u}}{1+e^{u\theta_{r_i}+(1-u)\theta_{r_j}-\theta_{r_k}}}.\label{eqn:G_def}
\end{align}
This term is a key ingredient in the exponent of the rate. We first derive some properties of this term. 
\begin{lemma}\label{lem:G-prop}
Assume $1\leq C_0=O(1)$ and $0<\beta=o(1)$. For any constant $C>0$, there exist constants $C_1, C_2, C_3>0$ such that for any $\theta\in\Theta_n(\beta, C_0)$, any $r\in\S_n$ and any $i\neq j\in[n]$ such that $\abs{\theta_{r_i}-\theta_{r_j}}\leq C$, the following hold for $n$ large enough,
\begin{equation}
\sup_{u\in[0,1]}\sup_{k\neq i,j}G_{i,j,k,\theta,r}(u)\leq C_1,\label{eq:G-max}
\end{equation}
\begin{equation}
\sup_{u\in[0,1]}\sum_{k\neq i,j}G_{i,j,k,\theta,r}(u)+G_{i,j,k,\theta, r}(1-u)\leq \sum_{k\neq i,j}\log\frac{(1+e^{\theta_{r_i}-\theta_{r_k}})(1+e^{\theta_{r_j}-\theta_{r_k}})}{\left(1+e^{\frac{\theta_{r_i}+\theta_{r_j}}{2}-\theta_{r_k}}\right)^2},\label{eq:G-sum}
\end{equation}
\begin{equation}
\sup_{u\in[0,1]}\sum_{k\neq i,j}G_{i,j,k,\theta,r}(u)^2+G_{i,j,k,\theta, r}(1-u)^2\leq C_2\frac{(\theta_{r_i}-\theta_{r_j})^4}{\beta\vee1/n},\label{eq:G-square-sum}
\end{equation}
\begin{equation}
C_3\frac{\abs{\theta_{r_i}-\theta_{r_j}}^2}{\beta\vee1/n}\leq\sum_{k\neq i,j}\log\frac{(1+e^{\theta_{r_i}-\theta_{r_k}})(1+e^{\theta_{r_j}-\theta_{r_k}})}{\left(1+e^{\frac{\theta_{r_i}+\theta_{r_j}}{2}-\theta_{r_k}}\right)^2}\leq C_2\frac{\abs{\theta_{r_i}-\theta_{r_j}}^2}{\beta\vee1/n}.\label{eq:G-sum-range}
\end{equation}
\end{lemma}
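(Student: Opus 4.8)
The plan is to recognize $G_{i,j,k,\theta,r}(u)$ as the Jensen gap of the convex softplus function $f(t)=\log(1+e^t)$, whose derivatives are $f'=\psi$ and $f''=\psi'$. Fix $k\neq i,j$ and abbreviate $a=\theta_{r_i}-\theta_{r_k}$, $b=\theta_{r_j}-\theta_{r_k}$, so that $G_{i,j,k,\theta,r}(u)=uf(a)+(1-u)f(b)-f(ua+(1-u)b)$. Then $G_{i,j,k,\theta,r}\geq 0$, $G_{i,j,k,\theta,r}(0)=G_{i,j,k,\theta,r}(1)=0$, and $G_{i,j,k,\theta,r}''(u)=-(a-b)^2f''(ua+(1-u)b)\leq 0$, so $G_{i,j,k,\theta,r}$ is concave on $[0,1]$. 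The single technical device I will use is the Green's-function representation
\[
G_{i,j,k,\theta,r}(u)=(\theta_{r_i}-\theta_{r_j})^2\int_0^1 K(u,t)\,f''(ta+(1-t)b)\,dt,\qquad K(u,t)=\min(u,t)(1-\max(u,t)),
\]
with $K\geq 0$ and $\int_0^1 K(u,t)\,dt=\tfrac12 u(1-u)$. Since $ta+(1-t)b$ lies on the segment joining $a$ and $b$, and $|a-b|=|\theta_{r_i}-\theta_{r_j}|\leq C$ forces $|ta+(1-t)b|\geq|\theta_{r_i}-\theta_{r_k}|-C$, the elementary inequalities $\tfrac14 e^{-|s|}\leq\psi'(s)\leq e^{-|s|}$ give $f''(ta+(1-t)b)\leq 4e^{C}\psi'(\theta_{r_i}-\theta_{r_k})$ uniformly in $t\in[0,1]$. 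Plugging this in, together with $u(1-u)\leq\tfrac14$, yields the master estimate $G_{i,j,k,\theta,r}(u)\leq \tfrac12 e^{C}(\theta_{r_i}-\theta_{r_j})^2\psi'(\theta_{r_i}-\theta_{r_k})$ for every $u\in[0,1]$, and by symmetry the same bound for $G_{i,j,k,\theta,r}(1-u)$.

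From here the four displays follow quickly. For (\ref{eq:G-max}), the master estimate with $(\theta_{r_i}-\theta_{r_j})^2\leq C^2$ and $\psi'\leq\tfrac14$ gives a bound depending only on $C$, uniform over $u$ and $k$. For (\ref{eq:G-sum}), observe that the $k$-th summand on the right-hand side equals $2G_{i,j,k,\theta,r}(1/2)$; since $u\mapsto G_{i,j,k,\theta,r}(u)$ is concave, so is $H(u):=G_{i,j,k,\theta,r}(u)+G_{i,j,k,\theta,r}(1-u)$, and $H$ is symmetric about $1/2$, hence $H(u)\leq H(1/2)=2G_{i,j,k,\theta,r}(1/2)$ pointwise in $k$; summing over $k\neq i,j$ gives the claim with no probabilistic input. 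For (\ref{eq:G-square-sum}), squaring the master estimate and summing gives $\sum_{k\neq i,j}\big(G_{i,j,k,\theta,r}(u)^2+G_{i,j,k,\theta,r}(1-u)^2\big)\leq \tfrac12 e^{2C}(\theta_{r_i}-\theta_{r_j})^4\sum_{k}\psi'(\theta_{r_i}-\theta_{r_k})^2$, and Lemma \ref{lem:sum-psi-prime} with $\alpha=2$, applied at $\theta_0=\theta_{r_i}\in[\theta_n,\theta_1]$, bounds the remaining sum by $C_2(\beta\vee 1/n)^{-1}$.

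For the two-sided estimate (\ref{eq:G-sum-range}), the upper bound is the $\alpha=1$ version of the previous step: $2G_{i,j,k,\theta,r}(1/2)\leq e^{C}(\theta_{r_i}-\theta_{r_j})^2\psi'(\theta_{r_i}-\theta_{r_k})$, summed via Lemma \ref{lem:sum-psi-prime}. For the matching lower bound I restrict the integral representation at $u=1/2$ to $t\in[1/4,3/4]$, where $K(1/2,t)\geq 1/16$ and, using $|ta+(1-t)b|\leq|\theta_{r_i}-\theta_{r_k}|+C$ and $\psi'(s)\geq\tfrac14 e^{-|s|}$, $f''(ta+(1-t)b)\geq c(C)\,\psi'(\theta_{r_i}-\theta_{r_k})$; this gives $2G_{i,j,k,\theta,r}(1/2)\geq c'(\theta_{r_i}-\theta_{r_j})^2\psi'(\theta_{r_i}-\theta_{r_k})$. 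Summing over $k\neq i,j$ and invoking the infimum part of Lemma \ref{lem:sum-psi-prime} with $\alpha=1$ finishes the bound, noting that the two omitted terms $k=i,j$ contribute at most $\tfrac12$, which is absorbed because $(\beta\vee 1/n)^{-1}$ is bounded away from $0$ for $n$ large. The only point needing genuine care is precisely this uniform-in-$t$ comparison $f''(ta+(1-t)b)\asymp_C\psi'(\theta_{r_i}-\theta_{r_k})$: one must check that translating the argument of $\psi'$ by the bounded amount $|\theta_{r_i}-\theta_{r_j}|\leq C$ costs only a multiplicative constant depending on $C$, both for the upper bounds (an upper estimate on $f''$ over the whole segment) and for the lower bound in (\ref{eq:G-sum-range}) (a lower estimate over the sub-segment $t\in[1/4,3/4]$, so that $G_{i,j,k,\theta,r}(1/2)$ does not degenerate). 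Everything else is bookkeeping with Lemma \ref{lem:sum-psi-prime}.
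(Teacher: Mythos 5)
Your proof is correct, but it takes a genuinely different route from the paper's. The paper argues display by display with explicit logistic algebra: (\ref{eq:G-max}) by comparing $\psi(u\theta_{r_i}+(1-u)\theta_{r_j}-\theta_{r_k})$ against $\psi(\theta_{r_i}-\theta_{r_k})\wedge\psi(\theta_{r_j}-\theta_{r_k})$, (\ref{eq:G-sum}) by the AM--GM step $e^{x}+e^{y}\ge 2e^{(x+y)/2}$ inside the denominator, and (\ref{eq:G-square-sum})--(\ref{eq:G-sum-range}) by rewriting each summand as $\log\frac{\cosh(\frac{\theta_{r_i}+\theta_{r_j}}{2}-\theta_{r_k})+\cosh\frac{\theta_{r_i}-\theta_{r_j}}{2}}{\cosh(\frac{\theta_{r_i}+\theta_{r_j}}{2}-\theta_{r_k})+1}$, squeezing it between constant multiples of $(\theta_{r_i}-\theta_{r_j})^2e^{-t}$ on the annuli $R_\theta(\frac{\theta_{r_i}+\theta_{r_j}}{2},t,t+1)$, and redoing the annulus count via (\ref{eq:R-lower})--(\ref{eq:R-upper}). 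You instead view $G_{i,j,k,\theta,r}$ as the Jensen gap of the softplus function and use the Dirichlet Green's-function identity, which compresses everything into one master estimate $G_{i,j,k,\theta,r}(u)\lesssim_{C}(\theta_{r_i}-\theta_{r_j})^2\psi'(\theta_{r_i}-\theta_{r_k})$ uniformly in $u$, with a matching lower bound at $u=1/2$; the sums over $k$ are then delegated to Lemma \ref{lem:sum-psi-prime} with $\alpha=1,2$ rather than re-counted, and your observation that the right-hand side of (\ref{eq:G-sum}) equals $\sum_{k\neq i,j}2G_{i,j,k,\theta,r}(1/2)$, combined with concavity and symmetry of $u\mapsto G_{i,j,k,\theta,r}(u)+G_{i,j,k,\theta,r}(1-u)$, replaces the paper's AM--GM computation. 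The underlying quantitative facts coincide (your uniform comparison of $f''$ along the segment is the $\cosh x-1\asymp x^2$ estimate in disguise, and Lemma \ref{lem:sum-psi-prime} is itself proved by the same annulus count), but your packaging is more unified and reuses existing machinery; the small costs are constants of order $e^{C}$ from centering at $\theta_{r_i}$ rather than at the midpoint $\frac{\theta_{r_i}+\theta_{r_j}}{2}$, and the need to absorb the two omitted terms $k\in\{i,j\}$ in the lower bound of (\ref{eq:G-sum-range}) for $n$ large, both of which you handle correctly.
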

\begin{proof}
We first look at (\ref{eq:G-max}). Note that
\begin{align*}
&G_{i,j,k,\theta,r}(u)=\log\frac{\psi(u\theta_{r_i}+(1-u)\theta_{r_j}-\theta_{r_k})}{\psi(\theta_{r_i}-\theta_{r_k})^u\psi(\theta_{r_j}-\theta_{r_k})^{1-u}}\\
&\leq\log\frac{\psi(u\theta_{r_i}+(1-u)\theta_{r_j}-\theta_{r_k})}{\psi(\theta_{r_i}-\theta_{r_k})\wedge\psi(\theta_{r_j}-\theta_{r_k})}\\
&=\log\frac{(1+e^{\theta_{r_i}-\theta_{r_k}})}{e^{(1-u)(\theta_{r_i}-\theta_{r_j})}+e^{\theta_{r_i}-\theta_{r_k}}}\vee\log\frac{(1+e^{\theta_{r_j}-\theta_{r_k}})}{e^{-u(\theta_{r_i}-\theta_{r_j})}+e^{\theta_{r_j}-\theta_{r_k}}}\leq C
\end{align*}
where the last inequality comes from $\abs{\theta_{r_i}-\theta_{r_j}}\leq C$.

Now we look at (\ref{eq:G-sum}).
\begin{align*}
&\sup_{u\in[0,1]}\sum_{k\neq i,j}G_{i,j,k,\theta,r}(u)+G_{i,j,k,\theta, r}(1-u)\\
&=\sup_{u\in[0,1]}\sum_{k\neq i,j}\log\frac{(1+e^{\theta_{r_i}-\theta_{r_k}})(1+e^{\theta_{r_j}-\theta_{r_k}})}{1+e^{u\theta_{r_i}+(1-u)\theta_{r_j}-\theta_{r_k}}+e^{(1-u)\theta_{r_i}+u\theta_{r_j}-\theta_{r_k}}+e^{\theta_{r_i}+\theta_{r_j}-2\theta_{r_k}}}\\
&\leq\sup_{u\in[0,1]}\sum_{k\neq i,j}\log\frac{(1+e^{\theta_{r_i}-\theta_{r_k}})(1+e^{\theta_{r_j}-\theta_{r_k}})}{1+2e^{\frac{\theta_{r_i}+\theta_{r_j}}{2}-\theta_{r_k}}+e^{\theta_{r_i}+\theta_{r_j}-2\theta_{r_k}}}=\sum_{k\neq i,j}\log\frac{(1+e^{\theta_{r_i}-\theta_{r_k}})(1+e^{\theta_{r_j}-\theta_{r_k}})}{\left(1+e^{\frac{\theta_{r_i}+\theta_{r_j}}{2}-\theta_{r_k}}\right)^2}.
\end{align*}

To see (\ref{eq:G-square-sum}), we first note that
\begin{align*}
&G_{i,j,k,\theta,r}(u)\\
&=u\log(1+e^{\theta_{r_i}-\theta_{r_k}})+(1-u)\log(1+e^{\theta_{r_j}-\theta_{r_k}})-\log(1+e^{u\theta_{r_i}+(1-u)\theta_{r_j}-\theta_{r_k}})\\
&\geq0
\end{align*}
by Jensen's inequality. Therefore,
\begin{align}
\nonumber&\sup_{u\in[0,1]}\sum_{k\neq i,j}G_{i,j,k,\theta,r}(u)^2+G_{i,j,k,\theta, r}(1-u)^2\leq\sup_{u\in[0,1]}\sum_{k\neq i,j}(G_{i,j,k,\theta,r}(u)+G_{i,j,k,\theta, r}(1-u))^2\\
&\leq\sum_{k\neq i,j}\left[\log\frac{(1+e^{\theta_{r_i}-\theta_{r_k}})(1+e^{\theta_{r_j}-\theta_{r_k}})}{\left(1+e^{\frac{\theta_{r_i}+\theta_{r_j}}{2}-\theta_{r_k}}\right)^2}\right]^2\label{eq:use-G-max}
\end{align}
where (\ref{eq:use-G-max}) can be derived similarly as in the proof of (\ref{eq:G-sum}). To upper bound (\ref{eq:use-G-max}), recall the definition of $R_{\theta}(\cdot,\cdot,\cdot)$ in (\ref{eq:neighbor-set}). We have that for any $k$ such that $r_k\in R_{\theta}(\frac{\theta_{r_i}+\theta_{r_j}}{2}, t, t+1)$,
\begin{align}
\nonumber&\log\frac{(1+e^{\theta_{r_i}-\theta_{r_k}})(1+e^{\theta_{r_j}-\theta_{r_k}})}{\left(1+e^{\frac{\theta_{r_i}+\theta_{r_j}}{2}-\theta_{r_k}}\right)^2}=\log\left(\frac{\cosh(\frac{\theta_{r_i}+\theta_{r_j}}{2}-\theta_{r_k})+\cosh\frac{\theta_{r_i}-\theta_{r_j}}{2}}{\cosh(\frac{\theta_{r_i}+\theta_{r_j}}{2}-\theta_{r_k})+1}\right)\\
&\leq\frac{\cosh\frac{\theta_{r_i}-\theta_{r_j}}{2}-1}{\cosh(\frac{\theta_{r_i}+\theta_{r_j}}{2}-\theta_{r_k})+1}\leq\frac{C_1^{\prime}(\theta_{r_i}-\theta_{r_j})^2}{e^{t}}\label{eq:bound-cosh}
\end{align}
for some constant $C_1^\prime>0$. (\ref{eq:bound-cosh}) can be seen from $\cosh x\leq1+C_2^{\prime}x^2$ for some constant $C_2^{\prime}>0$ when $\abs{x}\leq C/2$.
 and the fact that $t\leq |\frac{\theta_{r_i}+\theta_{r_j}}{2}-\theta_{r_k}|\leq t+1$. 
Therefore, using (\ref{eq:R-upper}) and (\ref{eq:bound-cosh}),
\begin{align*}
&\sum_{k\neq i,j}\left[\log\frac{(1+e^{\theta_{r_i}-\theta_{r_k}})(1+e^{\theta_{r_j}-\theta_{r_k}})}{\left(1+e^{\frac{\theta_{r_i}+\theta_{r_j}}{2}-\theta_{r_k}}\right)^2}\right]^2\\
&\leq\sum_{t\geq0}\sum_{k:r_k\in R_{\theta}(\frac{\theta_{r_i}+\theta_{r_j}}{2}, t, t+1)}\frac{C_1^{\prime2}(\theta_{r_i}-\theta_{r_j})^4}{e^{2t}}\leq\frac{C_3^{\prime}(\theta_{r_i}-\theta_{r_j})^4}{\beta\vee1/n}
\end{align*}
for some constant $C_3^\prime>0$. The upper bound of (\ref{eq:G-sum-range}) can be proved similarly.

Finally, we turn to the lower bound of (\ref{eq:G-sum-range}). Note that we also have $\cosh x\geq1+C_4^{\prime}x^2$ for some constant $C_4^{\prime}>0$ when $\abs{x}\leq C/2$. Therefore, when $r_k\in R_{\theta}(\frac{\theta_{r_i}+\theta_{r_j}}{2}, 0, 1)$,

\begin{align}
\nonumber&\log\frac{(1+e^{\theta_{r_i}-\theta_{r_k}})(1+e^{\theta_{r_j}-\theta_{r_k}})}{\left(1+e^{\frac{\theta_{r_i}+\theta_{r_j}}{2}-\theta_{r_k}}\right)^2}=\log\left(1+\frac{\cosh\frac{\theta_{r_i}-\theta_{r_j}}{2}-1}{\cosh(\frac{\theta_{r_i}+\theta_{r_j}}{2}-\theta_{r_k})+1}\right)\\
&\geq\frac{\cosh\frac{\theta_{r_i}-\theta_{r_j}}{2} - 1}{\cosh\frac{\theta_{r_i}-\theta_{r_j}}{2}+\cosh(\frac{\theta_{r_i}+\theta_{r_j}}{2}-\theta_{r_k})}\geq C_5^{\prime}\abs{\theta_{r_i}-\theta_{r_j}}^2\label{eq:log-lower}
\end{align}
for some constant $C_5^{\prime}>0$, where the first inequality is due to the fact that $\log(1+x) \geq x/(1+x)$ for any $x>-1$. Thus,
\begin{align}
\nonumber&\sum_{k\neq i,j}\log\frac{(1+e^{\theta_{r_i}-\theta_{r_k}})(1+e^{\theta_{r_j}-\theta_{r_k}})}{\left(1+e^{\frac{\theta_{r_i}+\theta_{r_j}}{2}-\theta_{r_k}}\right)^2}\geq\sum_{k:\substack{r_k\in R_{\theta}(\frac{\theta_{r_i}+\theta_{r_j}}{2}, 0, 1)\\k\neq i,j}}\log\frac{(1+e^{\theta_{r_i}-\theta_{r_k}})(1+e^{\theta_{r_j}-\theta_{r_k}})}{\left(1+e^{\frac{\theta_{r_i}+\theta_{r_j}}{2}-\theta_{r_k}}\right)^2}\\
&\geq\left(\abs{R_{\theta}(\frac{\theta_{r_i}+\theta_{r_j}}{2}, 0, 1)}-2\right)C_5^{\prime}\abs{\theta_{r_i}-\theta_{r_j}}^2\geq\frac{C_6^{\prime}\abs{\theta_{r_i}-\theta_{r_j}}^2}{\beta\vee1/n}\label{eq:sum-log-lower}
\end{align}
for some constant $C_6^{\prime}>0$, where (\ref{eq:sum-log-lower}) is a result of (\ref{eq:R-lower}) and (\ref{eq:log-lower}).
\end{proof}

\begin{lemma}\label{lem:sup-A-G}
Assume $\frac{p}{\log n(\beta\vee1/n)}\to\infty$ and $1\leq C_0=O(1)$. For any constant $C_1>0$, there exists $\delta=o(1)$ and constant $C_2>0$,  such that for any $\theta\in\Theta_n(\beta, C_0)$, any $r\in\S_n$ and any $i\neq j\in[n]$ such that $\abs{\theta_{r_i}-\theta_{r_j}}\leq C_1$, the following holds with probability at least $1-O(n^{-10})$ for $n$ large enough,
$$\sup_{u\in[0,1]}\sum_{k\neq i,j}A_{ik}G_{i,j,k,\theta,r}(u)+A_{jk}G_{i,j,k,\theta,r}(1-u)\leq(1+\delta)p\sum_{k\neq i,j}\log\frac{(1+e^{\theta_{r_i}-\theta_{r_k}})(1+e^{\theta_{r_j}-\theta_{r_k}})}{\left(1+e^{\frac{\theta_{r_i}+\theta_{r_j}}{2}-\theta_{r_k}}\right)^2}.$$
\end{lemma}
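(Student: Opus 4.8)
The plan is to split $\sup_u\sum_{k\neq i,j}\bigl(A_{ik}G_{i,j,k,\theta,r}(u)+A_{jk}G_{i,j,k,\theta,r}(1-u)\bigr)$ into a deterministic mean part, which Lemma \ref{lem:G-prop} already bounds by the right-hand side, and a stochastic fluctuation part, which I would control by a grid argument together with Bernstein's inequality. Fix $i\neq j$ with $\abs{\theta_{r_i}-\theta_{r_j}}\le C_1$; write $\Delta=\abs{\theta_{r_i}-\theta_{r_j}}$ and let $\Phi$ denote the sum $\sum_{k\neq i,j}\log\frac{(1+e^{\theta_{r_i}-\theta_{r_k}})(1+e^{\theta_{r_j}-\theta_{r_k}})}{(1+e^{(\theta_{r_i}+\theta_{r_j})/2-\theta_{r_k}})^2}$ on the right-hand side, so the target is $\sup_u(\cdots)\le(1+\delta)p\Phi$. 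From Lemma \ref{lem:G-prop} (and its proof) I will use: $G_{i,j,k,\theta,r}\ge0$, so every summand is nonnegative; the termwise bound $G_{i,j,k,\theta,r}(u)\le G_{i,j,k,\theta,r}(u)+G_{i,j,k,\theta,r}(1-u)\le C\Delta^2$, which holds entry by entry by the estimate (\ref{eq:bound-cosh}) established in the proof of Lemma \ref{lem:G-prop}; the summed bound $\sum_{k\neq i,j}\bigl(G_{i,j,k,\theta,r}(u)+G_{i,j,k,\theta,r}(1-u)\bigr)\le\Phi$ from (\ref{eq:G-sum}); the second-moment bound $\sum_{k\neq i,j}G_{i,j,k,\theta,r}(u)^2\le C\Delta^4/(\beta\vee n^{-1})$ from (\ref{eq:G-square-sum}); and $\Phi\ge C_3\Delta^2/(\beta\vee n^{-1})$ from (\ref{eq:G-sum-range}). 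I also record that $G_{i,j,k,\theta,r}(0)=G_{i,j,k,\theta,r}(1)=0$, and that a mean-value computation using $\abs{\psi''}\le\psi'$ together with the $R_\theta$-counting already used in the proofs of Lemmas \ref{lem:sum-psi-prime} and \ref{lem:sup-A-u} gives $\sum_{k\neq i,j}\sup_u\bigl|\tfrac{d}{du}G_{i,j,k,\theta,r}(u)\bigr|\le C\Delta^2/(\beta\vee n^{-1})$.

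The argument then proceeds in three steps. First, discretize: on the grid $u_a=a/n$, $a\in[n]$, every $u\in[0,1]$ has a nearest grid point $u_{a(u)}$ with $\abs{u-u_{a(u)}}\le n^{-1}$, and since $A_{ik}\in\{0,1\}$ the derivative bound above shows that replacing $u$ by $u_{a(u)}$ in both sums costs at most $n^{-1}\cdot C\Delta^2/(\beta\vee n^{-1})\le (C/C_3)(np)^{-1}p\Phi$, which is $\le\delta' p\Phi$ with $\delta'=(C/C_3)(np)^{-1}=o(1)$ since $np\gtrsim\log n\to\infty$. Second, concentrate at a fixed grid point: the $A_{ik}$-sum and the $A_{jk}$-sum involve disjoint edge sets (hence are independent), so it suffices to control $\sum_{k\neq i,j}(A_{ik}-p)G_{i,j,k,\theta,r}(u_a)$ and its $A_{jk}$ counterpart separately by Bernstein's inequality, using the variance proxy $p\sum_kG_{i,j,k,\theta,r}(u_a)^2\le Cp\Delta^4/(\beta\vee n^{-1})\le C'p\Delta^2\Phi$ and the uniform bound $\max_kG_{i,j,k,\theta,r}(u_a)\le C\Delta^2$. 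With $t=\delta_n p\Phi$ this yields a failure probability $\le\exp\bigl(-c\,\delta_n^2 p\Phi/\Delta^2\bigr)\le\exp\bigl(-c'\delta_n^2\,p/(\beta\vee n^{-1})\bigr)$, using $\Phi/\Delta^2\ge C_3/(\beta\vee n^{-1})$. Since the hypothesis $p/((\beta\vee n^{-1})\log n)\to\infty$ permits choosing $\delta_n\to0$ with $\delta_n^2\,p/((\beta\vee n^{-1})\log n)\to\infty$, this is $\le n^{-11}$ eventually, and a union bound over the $n$ grid points and the two sums shows that with probability $1-O(n^{-10})$, for all $a$ simultaneously, $\sum_{k\neq i,j}\bigl(A_{ik}G_{i,j,k,\theta,r}(u_a)+A_{jk}G_{i,j,k,\theta,r}(1-u_a)\bigr)\le p\sum_{k\neq i,j}\bigl(G_{i,j,k,\theta,r}(u_a)+G_{i,j,k,\theta,r}(1-u_a)\bigr)+2\delta_n p\Phi\le(1+2\delta_n)p\Phi$. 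Third, assemble: adding back the discretization error from Step 1 gives, for an arbitrary $u$, the bound $(1+2\delta_n+2\delta')p\Phi$, which is the claim with $\delta=2\delta_n+2\delta'=o(1)$.

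The main obstacle is the Bernstein step at the demanding $1-O(n^{-10})$ confidence with the sharp multiplicative factor $(1+\delta)$. The crude bound $G_{i,j,k,\theta,r}\le C_1$ from (\ref{eq:G-max}) makes the sub-exponential ("linear") term of Bernstein of order $\delta_n p\Phi/C_1\asymp\delta_n p\Delta^2/(\beta\vee n^{-1})$, which need not exceed $\log n$ when $\Delta$, and hence $\Phi$, is small; the resolution is the finer termwise estimate $G_{i,j,k,\theta,r}(u)\le C\Delta^2$ from (\ref{eq:bound-cosh}), which turns the linear term into one of order $\delta_n p\Phi/\Delta^2\gtrsim\delta_n p/(\beta\vee n^{-1})\gg\log n$, matching the Gaussian (variance) term. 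This is exactly where the assumption $p/((\beta\vee n^{-1})\log n)\to\infty$ enters. The remaining ingredients — the mean bound, the derivative estimate for discretization, and the second-moment bound — are routine consequences of Lemma \ref{lem:G-prop} and the neighbor-counting arguments already developed.
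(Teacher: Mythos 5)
Your proof is correct and follows essentially the same strategy as the paper: discretize $u$ on the grid $\{a/n\}_{a\in[n]}$, bound the discretization error via the $G'$ estimate, apply Bernstein with a union bound over grid points, and absorb the error terms into $(1+\delta)p\Phi$ using $\Phi\gtrsim\Delta^2/(\beta\vee n^{-1})$ and $p/((\beta\vee n^{-1})\log n)\to\infty$. The two small differences — you control discretization via the summed derivative bound $\sum_k|G'|\lesssim\Delta^2/(\beta\vee n^{-1})$ rather than the cruder $(n-2)\Delta^2/n\le\Delta^2$, and you invoke Bernstein directly with the termwise bound $G\le C\Delta^2$ rather than citing Lemma~\ref{lem:A-bern-2} — do not change the substance; indeed your explicit handling of the Bernstein linear term is a cleaner account of why the $O(\Delta^2)$ termwise estimate (rather than the $O(1)$ bound from (\ref{eq:G-max})) is what makes the concentration go through.
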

\begin{proof}
First we have 
\begin{equation}
\psi( a - c )\wedge\psi( b - c )\leq\frac{1}{ a - b }\log\frac{1+e^{ a - c }}{1+e^{ b - c }}\leq\psi( a - c )\vee\psi( b - c ),\label{eq:G-interesting-prop}
\end{equation}
for any $a,b,c\in\mathr$.
To see why (\ref{eq:G-interesting-prop}) holds, let us study the function $f(\delta) = \log(1+\exp(x+\delta)) - \log(1+\exp(x)) - \delta \exp(x)/(1+\exp(x))$ for any $x$. Note that $f'(\delta) = \exp(x+\delta)/(1+\exp(x+\delta)) -  \exp(x)/(1+\exp(x))$ is positive when $\delta >0$ and negative when $\delta<0$.  Since $f(0)=0$,  we have $f(\delta) \geq 0$. As a result, we have $\exp(x)/(1+\exp(x)) \leq \delta^{-1} \log((1+\exp(x+\delta))/(1+\exp(x)))$ when $\delta >0$ and the direction of the inequality is reversed when $\delta <0$. WLOG, we assume $ a  -  b  >0$. Then the first inequality of (\ref{eq:G-interesting-prop})  is proved by taking $x = 	 b  -  c $ and $\delta = a  -  b  $ and second one is proved by  taking $x = 	 a  -  c $ and $\delta =-( a  -  b ) $.


Recall the definition of $G_{i,j,k,\theta,r}$ in (\ref{eqn:G_def}). Then
\begin{align}
\nonumber&\abs{G_{i,j,k,\theta,r}^\prime(u)}=\abs{\theta_{r_i}-\theta_{r_j}}\abs{\frac{1}{\theta_{r_i}-\theta_{r_j}}\log\frac{1+e^{\theta_{r_i}-\theta_{r_k}}}{1+e^{\theta_{r_j}-\theta_{r_k}}}-\frac{e^{u(\theta_{r_i}-\theta_{r_j})+\theta_{r_j}-\theta_{r_k}}}{1+e^{u(\theta_{r_i}-\theta_{r_j})+\theta_{r_j}-\theta_{r_k}}}}\\
&\leq\abs{\theta_{r_i}-\theta_{r_j}}\abs{\psi(\theta_{r_i}-\theta_{r_k})-\psi(\theta_{r_j}-\theta_{r_k})}\label{eq:G-prime-bound-1}\\
&\leq\abs{\theta_{r_i}-\theta_{r_j}}^2.\label{eq:G-prime-bound-2}
\end{align}
Here (\ref{eq:G-prime-bound-1}) is due to the observation that both terms are in the interval $[\psi(\theta_{r_i}-\theta_{r_k})\wedge\psi(\theta_{r_j}-\theta_{r_k}),\psi(\theta_{r_i}-\theta_{r_k})\vee\psi(\theta_{r_j}-\theta_{r_k})]$ for any $u\in[0,1]$, where the first term is due to (\ref{eq:G-interesting-prop}) and the second term is due to the monotonicity of $\exp(x)/(1+\exp(x))$. Hence the difference between these two terms are bounded by $\abs{\psi(\theta_{r_i}-\theta_{r_k})-\psi(\theta_{r_j}-\theta_{r_k})}$ in absolute value.
(\ref{eq:G-prime-bound-2}) is due to $\psi^\prime(x)\leq1/4$.
Following the line of discretization, let $u_a=\frac{a}{n}, a=1,...,n$.   Then for any $u\in[0,1]$, let $a(u)=\arg\min_{a\in[n]}\abs{u-u_a}$. We have $\abs{u-u_{a(u)}}\leq1/n$. Thus,
\begin{align}
\nonumber&\abs{\sum_{k\neq i,j}(A_{ik}-p)(G_{i,j,k,\theta,r}(u)-G_{i,j,k,\theta,r}(u_{a(u)}))+(A_{jk}-p)(G_{i,j,k,\theta,r}(1-u)-G_{i,j,k,\theta,r}(1-u_{a(u)}))}\\
&\leq2\abs{\theta_{r_i}-\theta_{r_j}}^2(n-2)\abs{u-u_{a(u)}}\leq2\abs{\theta_{r_i}-\theta_{r_j}}^2\label{eq:G-discrete}.
\end{align}
Then
\begin{align}
\nonumber&\sup_{u\in[0,1]}\sum_{k\neq i,j}A_{ik}G_{i,j,k,\theta,r}(u)+A_{jk}G_{i,j,k,\theta,r}(1-u)\\
&\leq p\sum_{k\neq i,j}\log\frac{(1+e^{\theta_{r_i}-\theta_{r_k}})(1+e^{\theta_{r_j}-\theta_{r_k}})}{\left(1+e^{\frac{\theta_{r_i}+\theta_{r_j}}{2}-\theta_{r_k}}\right)^2}\label{eq:A-G-sum-bound-1}\\
\nonumber&\quad\quad+\sup_{u\in[0,1]}\sum_{k\neq i,j}(A_{ik}-p)G_{i,j,k,\theta,r}(u)+(A_{jk}-p)G_{i,j,k,\theta,r}(1-u)\\
\nonumber&\leq p\sum_{k\neq i,j}\log\frac{(1+e^{\theta_{r_i}-\theta_{r_k}})(1+e^{\theta_{r_j}-\theta_{r_k}})}{\left(1+e^{\frac{\theta_{r_i}+\theta_{r_j}}{2}-\theta_{r_k}}\right)^2}\\
&\quad\quad+2\abs{\theta_{r_i}-\theta_{r_j}}^2+\max_{a\in[n]}\sum_{k\neq i,j}(A_{ik}-p)G_{i,j,k,\theta,r}(u_{a})+(A_{jk}-p)G_{i,j,k,\theta,r}(1-u_{a})\label{eq:A-G-sum-bound-2}\\
\nonumber&\leq p\sum_{k\neq i,j}\log\frac{(1+e^{\theta_{r_i}-\theta_{r_k}})(1+e^{\theta_{r_j}-\theta_{r_k}})}{\left(1+e^{\frac{\theta_{r_i}+\theta_{r_j}}{2}-\theta_{r_k}}\right)^2}+2\abs{\theta_{r_i}-\theta_{r_j}}^2\\
&\quad\quad+C_1^{\prime}\sqrt{p\log n\max_{a\in[n]}\sum_{k\neq i,j}G_{i,j,k,\theta,r}(u_a)^2+G_{i,j,k,\theta,r}(1-u_a)^2}\label{eq:A-G-sum-bound-3}\\
&\leq p\sum_{k\neq i,j}\log\frac{(1+e^{\theta_{r_i}-\theta_{r_k}})(1+e^{\theta_{r_j}-\theta_{r_k}})}{\left(1+e^{\frac{\theta_{r_i}+\theta_{r_j}}{2}-\theta_{r_k}}\right)^2}+2\abs{\theta_{r_i}-\theta_{r_j}}^2+C_2^\prime\abs{\theta_{r_i}-\theta_{r_j}}^2\sqrt{\frac{p\log n}{\beta\vee1/n}}\label{eq:A-G-sum-bound-4}\\
&=(1+\delta)p\sum_{k\neq i,j}\log\frac{(1+e^{\theta_{r_i}-\theta_{r_k}})(1+e^{\theta_{r_j}-\theta_{r_k}})}{\left(1+e^{\frac{\theta_{r_i}+\theta_{r_j}}{2}-\theta_{r_k}}\right)^2}\label{eq:A-G-sum-bound-5}
\end{align}
with probability at least $1-O(n^{-10})$ for some constants $C_1^\prime, C_2^\prime>0$ and $\delta=o(1)$. (\ref{eq:A-G-sum-bound-1}) is due to Lemma \ref{lem:G-prop}. (\ref{eq:A-G-sum-bound-2}) comes from (\ref{eq:G-discrete}). (\ref{eq:A-G-sum-bound-3}) and (\ref{eq:A-G-sum-bound-4}) are a consequence of Lemma \ref{lem:A-bern-2} and Lemma \ref{lem:G-prop}. (\ref{eq:A-G-sum-bound-5}) is because of Lemma \ref{lem:G-prop} and 
$$p\frac{1}{\abs{\theta_{r_i}-\theta_{r_j}}^2}\sum_{k\neq i,j}\log\frac{(1+e^{\theta_{r_i}-\theta_{r_k}})(1+e^{\theta_{r_j}-\theta_{r_k}})}{\left(1+e^{\frac{\theta_{r_i}+\theta_{r_j}}{2}-\theta_{r_k}}\right)^2}\gtrsim \frac{p}{\beta\vee1/n}\gg\sqrt{\frac{p\log n}{\beta\vee1/n}}\gg1,$$
which concludes the proof.

\end{proof}

\begin{lemma}\label{lem:BTL-two-point}
 Assume $\frac{p}{\log n(\beta\vee1/n)}\to\infty$ and $1\leq C_0=O(1)$. For any constant $C>0$. there exist constants $C_1,C_2>0$, $\delta=o(1)$ such that for any $\theta^*\in\Theta_n(\beta, C_0)$, any  $r^*\in\S_n$ and $i\neq j\in[n]$ such that $\abs{\theta_{r_i^*}^*-\theta_{r_j^*}^*}\leq C$, we have
\begin{align*}
&\inf_{\wh{r}}\frac{\p_{(\theta^*, r^*)}\left(\wh{r}\neq r^*\right)+\p_{(\theta^*, r^{*(i,j)})}\left(\wh{r}\neq r^{*(i,j)}\right)}{2}\\
&\geq C_1\exp\left(-\sqrt{\frac{C_2Lp(\theta_{r_i^*}^*-\theta_{r_j^*}^*)^2}{\beta\vee1/n}}-(1+\delta)2Lp\sum_{k\neq i,j}G_{i,j,k,\theta^*, r^*}(1/2)\right)
\end{align*}
for $n$ large enough. Here $r^{*(i,j)}$ is defined as in (\ref{eqn:r_star_ij_def}).
\end{lemma}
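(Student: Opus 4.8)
The plan is to reduce the two-point lower bound to a likelihood-ratio (Neyman--Pearson) calculation, exactly as in the Gaussian case (Lemma~\ref{lem:Gaussian-two-point}), but now controlling the exponent via the function $G_{i,j,k,\theta^*,r^*}$ defined in (\ref{eqn:G_def}) and its analytic properties established in Lemma~\ref{lem:G-prop} and Lemma~\ref{lem:sup-A-G}. First, by restricting any estimator $\wh r$ to the binary decision of whether it outputs $r^*$ or $r^{*(i,j)}$, and averaging, we get $\inf_{\wh r}\tfrac12(\p_{(\theta^*,r^*)}(\wh r\neq r^*)+\p_{(\theta^*,r^{*(i,j)})}(\wh r\neq r^{*(i,j)})) \geq \inf_{\text{tests }\phi}\tfrac12(\p_{(\theta^*,r^*)}(\phi=1)+\p_{(\theta^*,r^{*(i,j)})}(\phi=0))$, and the optimal test is the likelihood ratio test. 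Conditioning on the random graph $A$ on a high-probability event $\mathcal F$ (where Lemma~\ref{lem:sup-A-u} and Lemma~\ref{lem:sup-A-G} hold), the log-likelihood ratio between the two hypotheses is a sum over $k\neq i,j$ of terms involving the games between $i$ (resp. $j$) and $k$: since only positions $i,j$ of $r^*$ are swapped, the data $\bar y_{ik}^{}$ has success probability $\psi(\theta^*_{r^*_i}-\theta^*_{r^*_k})$ under one hypothesis and $\psi(\theta^*_{r^*_j}-\theta^*_{r^*_k})$ under the other, and symmetrically for $j$. Standard facts about binary hypothesis testing (e.g. a Bretagnolle--Huber or direct Chernoff-type argument) then give a lower bound on the testing error of the form $C_1\exp(-I)$, where $I$ is, up to constants, the Chernoff information between the two product measures, i.e. essentially $L\sum_{k\neq i,j}[A_{ik}G_{i,j,k,\theta^*,r^*}(1/2)+A_{jk}G_{i,j,k,\theta^*,r^*}(1/2)]$ — this is precisely why $G$ evaluated at $u=1/2$ appears in the statement.

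Next, I would replace the graph-dependent exponent $L\sum_{k\neq i,j}(A_{ik}+A_{jk})G_{i,j,k,\theta^*,r^*}(1/2)$ by its population version $2Lp\sum_{k\neq i,j}G_{i,j,k,\theta^*,r^*}(1/2)$. The fluctuation of $\sum_{k\neq i,j}(A_{ik}-p)G_{i,j,k,\theta^*,r^*}(1/2)$ is controlled by a Bernstein bound (Lemma~\ref{lem:A-bern-2}), and its variance proxy is $\sum_{k\neq i,j}G_{i,j,k,\theta^*,r^*}(1/2)^2 \lesssim (\theta^*_{r^*_i}-\theta^*_{r^*_j})^4/(\beta\vee n^{-1})$ by (\ref{eq:G-square-sum}); hence the deviation is $\lesssim \sqrt{p\log n\,(\theta^*_{r^*_i}-\theta^*_{r^*_j})^4/(\beta\vee n^{-1})} = |\theta^*_{r^*_i}-\theta^*_{r^*_j}|^2\sqrt{p\log n/(\beta\vee n^{-1})}$, which after multiplication by $L$ gives a term of the order $L|\theta^*_{r^*_i}-\theta^*_{r^*_j}|^2\sqrt{p\log n/(\beta\vee n^{-1})}$. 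Since $(\theta^*_{r^*_i}-\theta^*_{r^*_j})^2/(\beta\vee n^{-1}) \lesssim Lp(\theta^*_{r^*_i}-\theta^*_{r^*_j})^2/(\beta\vee n^{-1})$ under the scaling $p/((\beta\vee n^{-1})\log n)\to\infty$, this deviation is absorbed into the $\sqrt{C_2Lp(\theta^*_{r^*_i}-\theta^*_{r^*_j})^2/(\beta\vee n^{-1})}$ term appearing in the claimed bound (possibly after adjusting $C_2$), and the leftover multiplicative slack is the $(1+\delta)$ factor with $\delta=o(1)$. A similar but cruder bound handles the lower-order additive terms (like the $2|\theta^*_{r^*_i}-\theta^*_{r^*_j}|^2$ discretization errors) coming from comparing $G(u)$ at nearby $u$, which are all dominated by the same square-root term.

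The main obstacle, and the step requiring the most care, is the passage from the likelihood-ratio test error to the clean exponential lower bound $C_1\exp(-\text{Chernoff})$: one must show that the testing error is \emph{at least} of exponential order $\exp(-I)$ and not merely $\exp(-I(1+o(1)))$ with an uncontrolled polynomial prefactor that could kill the bound, and one must verify that the relevant second-order (Gaussian-approximation) correction is negligible. Here I would either invoke a sharp two-point inequality of Bretagnolle--Huber--Carol type — $\p_0(\phi=1)+\p_1(\phi=0)\geq \tfrac12\exp(-D(P_0\|P_1))$ is too lossy, so instead use the symmetrized Chernoff bound $\geq c\exp(-\text{Chernoff}(P_0,P_1))$ valid for product measures — or carry out a direct computation: since each $\bar y_{ik}^{}$ is an average of $L-L_1\asymp L$ i.i.d. Bernoullis, the log-likelihood ratio is a sum of independent bounded increments, and a standard change-of-measure (tilting to the Chernoff midpoint) plus a Berry--Esseen/CLT argument (Lemma~\ref{lem:CLT-stein}) on the tilted measure yields error $\gtrsim \exp(-I)/\sqrt{\text{(variance of log-LR under tilt)}}$, with the variance being polynomially bounded so that its square root is absorbed into the $C_1$ and the $\exp(-\sqrt{\cdots})$ slack. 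Checking that the variance of the tilted log-likelihood ratio is indeed polynomially bounded — equivalently, that $\sum_k G''$-type quantities are under control — is where Lemma~\ref{lem:G-prop}, particularly (\ref{eq:G-max}) and (\ref{eq:G-square-sum}), does the real work.
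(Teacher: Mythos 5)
Your overall strategy matches the paper's proof exactly: reduce to the Neyman--Pearson likelihood-ratio test, condition on $A$ on a high-probability event where Lemmas~\ref{lem:sup-A-u} and \ref{lem:sup-A-G} hold, tilt to the Chernoff minimizer $u^*$ via a change of measure, use a Berry--Esseen bound (Lemma~\ref{lem:CLT-stein}) on the tilted measure, and control the exponent with the $G$-properties in Lemma~\ref{lem:G-prop}. The role of $u=1/2$ and the appearance of the symmetrized $2\sum_k G(1/2)$ is also correctly identified via Lemma~\ref{lem:sup-A-G}.

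However, your middle paragraph contains a concrete misattribution that, if followed literally, does not check out. You claim the Bernstein deviation of $L\sum_{k}(A_{ik}-p)G(1/2)$, which by \eqref{eq:G-square-sum} is of order $L|\theta^*_{r_i^*}-\theta^*_{r_j^*}|^2\sqrt{p\log n/(\beta\vee n^{-1})}$, is ``absorbed into the $\sqrt{C_2Lp(\theta^*_{r_i^*}-\theta^*_{r_j^*})^2/(\beta\vee n^{-1})}$ term.'' The ratio of the former to the latter is $|\theta^*_{r_i^*}-\theta^*_{r_j^*}|\sqrt{L\log n}$, which diverges (e.g.\ when $|\theta^*_{r_i^*}-\theta^*_{r_j^*}|\asymp 1$ and $L\log n\to\infty$), so this absorption fails. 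What the paper actually does — and what works — is to compare that Bernstein fluctuation against the main exponent $2Lp\sum_k G(1/2)\asymp Lp|\theta^*_{r_i^*}-\theta^*_{r_j^*}|^2/(\beta\vee n^{-1})$; the ratio is $\sqrt{(\beta\vee n^{-1})\log n/p}=o(1)$ by hypothesis, so it is absorbed into the multiplicative $(1+\delta)$ slack (this is precisely Lemma~\ref{lem:sup-A-G}). The $\sqrt{C_2Lp(\theta^*_{r_i^*}-\theta^*_{r_j^*})^2/(\beta\vee n^{-1})}$ term in the statement has a different origin: it is the $-u^*T$ factor from the change of measure, where $T$ is chosen to be one standard deviation of the log-likelihood ratio under the tilted measure $\mathbb{Q}_A$, and Lemma~\ref{lem:sup-A-u} bounds that standard deviation by $\sqrt{C Lp(\theta^*_{r_i^*}-\theta^*_{r_j^*})^2/(\beta\vee n^{-1})}$. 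Your last paragraph does gesture at this via the local CLT prefactor, so the fix is a reorganization of the accounting rather than a new idea, but as written the middle paragraph would not survive verification.
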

\begin{proof}
By Neyman-Pearson Lemma, the optimal procedure is the likelihood ratio test:
\begin{align*}
&\inf_{\wh{r}}\frac{\p_{(\theta^*, r^*)}\left(\wh{r}\neq r^*\right)+\p_{(\theta^*, r^{*(i,j)})}\left(\wh{r}\neq r^{*(i,j)}\right)}{2}\\
&=\frac{\p_{(\theta^*, r^*)}\left(\ell_n(\theta^*, r^*)\geq\ell_n(\theta^*, r^{*(i,j)})\right)+\p_{(\theta^*, r^{*(i,j)})}\left(\ell_n(\theta^*, r^*)\leq\ell_n(\theta^*, r^{*(i,j)})\right)}{2}
\end{align*}
We only need to lower bound $\p_{(\theta^*, r^*)}\left(\ell_n(\theta^*, r^*)\geq\ell_n(\theta^*, r^{*(i,j)})\right)$ and the other term can be bounded similarly. WLOG, assume $i<j$ and $r_i^*=a<r_j^*=b$. Let
$$Z_{kl}=y_{ikl}\log\frac{\psi(\theta_b^*-\theta_{r_k^*}^*)}{\psi(\theta_a^*-\theta_{r_k^*}^*)}+(1-y_{ikl})\log\frac{1-\psi(\theta_b^*-\theta_{r_k^*}^*)}{1-\psi(\theta_a^*-\theta_{r_k^*}^*)}, k\neq i,j,$$
$$\bar{Z}_{kl}=y_{jkl}\log\frac{\psi(\theta_a^*-\theta_{r_k^*}^*)}{\psi(\theta_b^*-\theta_{r_k^*}^*)}+(1-y_{jkl})\log\frac{1-\psi(\theta_a^*-\theta_{r_k^*}^*)}{1-\psi(\theta_b^*-\theta_{r_k^*}^*)}, k\neq i,j$$
and
$$Z_{0l}=y_{ijl}\log\frac{\psi(\theta_b^*-\theta_{a}^*)}{\psi(\theta_a^*-\theta_{b}^*)}+(1-y_{ijl})\log\frac{1-\psi(\theta_b-\theta_{a}^*)}{1-\psi(\theta_a^*-\theta_{b}^*)}.$$

To simplify notation, we use $\p_A(\cdot)$ as $\p_{(\theta^*, r^*)}(\cdot|A)$ and $\E_{A}[\cdot]$ as $\E_{(\theta^*, r^*)}[\cdot|A]$.
Then
\begin{align*}
&\p_{A}\left(\ell_n(\theta^*, r^*)\geq\ell_n(\theta^*, r^{*(i,j)})\right)=\p_{A}\left(\sum_{l=1}^L\left(A_{ij}Z_{0l}+\sum_{k\neq i,j}A_{ik}Z_{kl}+A_{jk}\bar{Z}_{kl}\right)\geq0\right).
\end{align*}

Let $\mu_{i^\prime j^\prime}=\psi(\theta_{r_{i^\prime}^*}^*-\theta_{r_{j^\prime}^*}^*)$ for any $i^\prime\neq j^\prime$.
Define 
\begin{align*}
&\nu_{r^*}(u)=\log\E_{A}\left\{\exp\left[u\left(A_{ij}Z_{01}+\sum_{k\neq i,j}A_{ik}Z_{k1}+A_{jk}\bar{Z}_{k1}\right)\right]\right\}\\
&=A_{ij}\nu_{0,r^*}(u)+\sum_{k\neq i,j}A_{ik}\nu_{k,r^*}(u)+\sum_{k\neq i,j}A_{jk}\bar{\nu}_{k,r^*}(u)
\end{align*}
where
$$\nu_{0,r^*}(u)=\log\left[\mu_{ij}^u(1-\mu_{ij})^{1-u}+\mu_{ij}^{1-u}(1-\mu_{ij})^u\right]=-\log\frac{1+e^{\theta_a^*-\theta_b^*}}{e^{u(\theta_a^*-\theta_b^*)}+e^{(1-u)(\theta_a^*-\theta_b^*)}}$$
$$\nu_{k,r^*}(u)=\log\left[\mu_{jk}^u\mu_{ik}^{1-u}+(1-\mu_{jk})^u(1-\mu_{ik})^{1-u}\right]=-G_{i,j,k,\theta^*,r^*}(1-u)$$
$$\bar{\nu}_{k,r^*}(u)=\log\left[\mu_{ik}^u\mu_{jk}^{1-u}+(1-\mu_{ik})^u(1-\mu_{jk})^{1-u}\right]=-G_{i,j,k,\theta^*,r^*}(u).$$
$\nu_{r^*}(u)$ is the conditional cumulant generating function of $A_{ij}Z_{01}+\sum_{k\neq i,j}A_{ik}Z_{kl}+A_{jk}\bar{Z}_{kl}$. We also have $\nu_{0,r^*}(u), \nu_{k,r^*}(u),\bar{\nu}_{k,r^*}(u)$ as the cumulant generating functions of $Z_{01}, Z_{k1}, \bar{Z}_{k1}$ respectively. Define
$$u_{r^*}^*=\arg\min_{u\geq0}\nu_{r^*}(u).$$
Since cumulant generating functions are convex and  $\nu_{r^*}(0) = \nu_{r^*}(1)=0$,
it can be seen easily that $u_{r^*}^*\in(0,1)$ and depends on $A$. 
Following the change-of-measure argument in the proof of Lemma 8.4 and Lemma 9.3 of \cite{chen2020partial}, we have
\begin{align}
\nonumber&\p_{A}\left(\sum_{l=1}^L\left(A_{ij}Z_{0l}+\sum_{k\neq i,j}A_{ik}Z_{kl}+A_{jk}\bar{Z}_{kl}\right)\geq0\right)\\
&\geq\exp\left(-u_{r^*}^*T+L\nu_{r^*}(u_{r^*}^*)\right)\mathbb{Q}_A\left(0\leq\sum_{l=1}^L\left(A_{ij}Z_{0l}+\sum_{k\neq i,j}A_{ik}Z_{kl}+A_{jk}\bar{Z}_{kl}\right)\leq T\right)\label{eq:change-measure}
\end{align}
for any $T$ in (\ref{eq:change-measure}) to be determined later and $\mathbb{Q}_A$ is a measure under which $Z_{0l}, Z_{kl}, \bar{Z}_{kl}, l\in[L], k\neq i,j$ are all independent given $A$ and follow
$$\mathbb{Q}_A(Z_{0l}=s)=e^{u_{r^*}^*s-\nu_{0,r^*}(u_{r^*}^*)}\p_A\left(Z_{0l}=s\right),$$
$$\mathbb{Q}_A(Z_{kl}=s)=e^{u_{r^*}^*s-\nu_{k,r^*}(u_{r^*}^*)}\p_A\left(Z_{kl}=s\right),k\neq i,j,k\in[n],$$
$$\mathbb{Q}_A(\bar{Z}_{kl}=s)=e^{u_{r^*}^*s-\bar{\nu}_{k,r^*}(u_{r^*}^*)}\p_A\left(\bar{Z}_{kl}=s\right), k\neq i,j, k\in[n].$$
Furthermore, by definition of $u_{r^*}^*$, the expectation of $A_{ij}Z_{0l}+\sum_{k\neq i,j}A_{ik}Z_{kl}+A_{jk}\bar{Z}_{kl}$ under $\mathbb{Q}_A$ is 0. 

We can compute the 2nd and 4th moments under $Q_A$, denoted as $\Var_{\mathbb{Q}_A}(\cdot)$ and $\kappa_{\mathbb{Q}_A}(\cdot)$ respectively:
\begin{align}
\nonumber&\Var_{\mathbb{Q}_A}(Z_{0l})=\nu_{0,r^*}^{\prime\prime}(u_{r^*}^*)=4\mu_{ij}(1-\mu_{ij})\frac{(\theta_a^*-\theta_b^*)^2e^{2u_{r^*}^*(\theta_a^*-\theta_b^*)}}{((1-\mu_{ij})e^{2u_{r^*}^*(\theta_a^*-\theta_b^*)}+\mu_{ij})^2}\\
&=4(\theta_a^*-\theta_b^*)^2\psi^{\prime}\left((1-2u_{r^*}^*)(\theta_a^*-\theta_b^*)\right),\label{eq:Q-2nd-1}
\end{align}
\begin{align}
\nonumber&\Var_{\mathbb{Q}_A}(Z_{kl})=\nu_{k,r^*}^{\prime\prime}(u_{r^*}^*)=\mu_{ik}(1-\mu_{ik})\frac{(\theta_a^*-\theta_b^*)^2e^{u_{r^*}^*(\theta_a^*-\theta_b^*)}}{((1-\mu_{ik})e^{u_{r^*}^*(\theta_a^*-\theta_b^*)}+\mu_{ik})^2}\\
&=(\theta_a^*-\theta_b^*)^2\psi^{\prime}\left((1-u_{r^*}^*)\theta_a^*+u_{r^*}^*\theta_b^*-\theta_{r_k^*}^*\right), k\neq i,j, k\in[n],\label{eq:Q-2nd-2}
\end{align}
\begin{align}
\nonumber&\Var_{\mathbb{Q}_A}(\bar{Z}_{kl})=\bar{\nu}_{k,r^*}^{\prime\prime}(u_{r^*}^*)=\mu_{jk}(1-\mu_{jk})\frac{(\theta_a^*-\theta_b^*)^2e^{-u_{r^*}^*(\theta_a^*-\theta_b^*)}}{((1-\mu_{jk})e^{-u_{r^*}^*(\theta_a^*-\theta_b^*)}+\mu_{jk})^2}\\
&=(\theta_a^*-\theta_b^*)^2\psi^{\prime}\left(u_{r^*}^*\theta_a^*+(1-u_{r^*}^*)\theta_b^*-\theta_{r_k^*}^*\right),k\neq i,j, k\in[n]\label{eq:Q-2nd-3}
\end{align}
and
\begin{align}
\nonumber&\kappa_{\mathbb{Q}_A}(Z_{0l})=\mathbb{Q}_A\left((Z_{0l}-\mathbb{Q}_A(Z_{0l}))^4\right)=\nu_{0,r^*}^{\prime\prime\prime\prime}(u_{r^*}^*)+3\nu_{0,r^*}^{\prime\prime}(u_{r^*}^*)^2\\
\nonumber&\leq16\mu_{ij}(1-\mu_{ij})\frac{(\theta_a^*-\theta_b^*)^4e^{2u_{r^*}^*(\theta_a^*-\theta_b^*)}}{[(1-\mu_{ij})e^{2u_{r^*}^*(\theta_a^*-\theta_b^*)}+\mu_{ij}]^2}+3\nu_{0,r^*}^{\prime\prime}(u_{r^*}^*)^2\\
&=4(\theta_a^*-\theta_b^*)^2\nu_{0,r^*}^{\prime\prime}(u_{r^*}^*)+3\nu_{0,r^*}^{\prime\prime}(u_{r^*}^*)^2\leq7(\theta_a^*-\theta_b^*)^4\psi^{\prime}\left((1-2u_{r^*}^*)(\theta_a^*-\theta_b^*)\right),\label{eq:Q-4th-1}
\end{align}
\begin{align}
\nonumber&\kappa_{\mathbb{Q}_A}(Z_{kl})=\mathbb{Q}_A\left((Z_{kl}-\mathbb{Q}_A(Z_{kl}))^4\right)\leq(\theta_a^*-\theta_b^*)^2\nu_{k,r^*}^{\prime\prime}(u_{r^*}^*)+3\nu_{k,r^*}^{\prime\prime}(u_{r^*}^*)^2\\
&\leq4(\theta_a^*-\theta_b^*)^4\psi^{\prime}\left((1-u_{r^*}^*)\theta_a^*+u_{r^*}^*\theta_b^*-\theta_{r_k^*}^*\right),k\neq i,j, k\in[n]\label{eq:Q-4th-2}
\end{align}
\begin{align}
\nonumber&\kappa_{\mathbb{Q}_A}(\bar{Z}_{kl})=\mathbb{Q}_A\left((\bar{Z}_{kl}-\mathbb{Q}_A(\bar{Z}_{kl}))^4\right)\leq(\theta_a^*-\theta_b^*)^2\bar{\nu}_{k,r^*}^{\prime\prime}(u_{r^*}^*)+3\bar{\nu}_{k,r^*}^{\prime\prime}(u_{r^*}^*)^2\\
&\leq4(\theta_a^*-\theta_b^*)^4\psi^{\prime}\left(u_{r^*}^*\theta_a^*+(1-u_{r^*}^*)\theta_b^*-\theta_{r_k^*}^*\right),k\neq i,j, k\in[n].\label{eq:Q-4th-3}
\end{align}

Let $\mathcal{F}_1$ be the event on which the following holds:
$$
\inf_{u\in[0,1]}\sum_{k\neq i,j}A_{ik}\psi^\prime((1-u)\theta_a^*+u\theta_b^*-\theta_{r_k^*}^*)+A_{jk}\psi^\prime(u\theta_a^*+(1-u)\theta_b^*-\theta_{r_k^*}^*)\geq C_1^\prime\frac{p}{\beta\vee1/n},$$
$$
\sup_{u\in[0,1]}\sum_{k\neq i,j}A_{ik}\psi^\prime((1-u)\theta_a^*+u\theta_b^*-\theta_{r_k^*}^*)^{3/4}+A_{jk}\psi^\prime(u\theta_a^*+(1-u)\theta_b^*-\theta_{r_k^*}^*)^{3/4}\leq C_2^\prime\frac{p}{\beta\vee1/n}
$$
for some constants $C_1^\prime, C_2^\prime>0$. We shall choose $C_1^\prime, C_2^\prime$ to make $\mathcal{F}_1$ happen with probability at least $1-O(n^{-10})$ by Lemma \ref{lem:sup-A-u}. Therefore, we shall choose $T$ as
\begin{align*}
&T=\sqrt{L\left(A_{ij}\Var_{\mathbb{Q}_A}(Z_{01})+\sum_{k\neq i,j}A_{ik}\Var_{\mathbb{Q}_A}(Z_{k1})+A_{jk}\Var_{\mathbb{Q}_A}(\bar{Z}_{k1})\right)}\\
&\leq\sqrt{C_3^{\prime}L\frac{p(\theta_a^*-\theta_b^*)^2}{\beta\vee1/n}}
\end{align*}
on $\mathcal{F}_1$ for some constant $C_3^\prime>0$ using (\ref{eq:Q-2nd-1})-(\ref{eq:Q-2nd-3}). With this choice of $T$, the $\mathbb{Q}_A$ measure can be lower bounded by some constant $C_4^\prime>0$ on $\mathcal{F}_1$. This can be seen by bounding the 4th moment approximation bound using Lemma \ref{lem:CLT-stein}
:
\begin{align}
\nonumber&\sqrt{L\frac{A_{ij}\kappa_{\mathbb{Q}_A}(Z_{01})^{3/4}+\sum_{k\neq i,j}A_{ik}\kappa_{\mathbb{Q}_A}(Z_{kl})^{3/4}+A_{jk}\kappa_{\mathbb{Q}_A}(\bar{Z}_{kl})^{3/4}}{\left(LA_{ij}\Var_{\mathbb{Q}_A}(Z_{01})+L\sum_{k\neq i,j}A_{ik}\Var_{\mathbb{Q}_A}(Z_{k1})+A_{jk}\Var_{\mathbb{Q}_A}(\bar{Z}_{k1})\right)^{3/2}}}\\
&\leq\sqrt{C_5^\prime L\frac{\sum_{k\neq i,j}A_{ik}\psi^{\prime}\left((1-u_{r^*}^*)\theta_a^*+u_{r^*}^*\theta_b^*-\theta_{r_k^*}^*\right)^{3/4}+A_{jk}\psi^{\prime}\left(u_{r^*}^*\theta_a^*+(1-u_{r^*}^*)\theta_b^*-\theta_{r_k^*}^*\right)^{3/4}}{\left(L\sum_{k\neq i,j}A_{ik}\psi^\prime((1-u_{r^*}^*)\theta_a^*+u_{r^*}^*\theta_b^*-\theta_{r_k^*}^*)+A_{jk}\psi^\prime(u_{r^*}^*\theta_a^*+(1-u_{r^*}^*)\theta_b^*-\theta_{r_k^*}^*)\right)^{3/2}}}\label{eq:Q-clt-bound-1}\\
&\leq C_6^{\prime}\left(L\frac{p}{\beta\vee1/n}\right)^{-1/4}\label{eq:Q-clt-bound-2}
\end{align}
on $\mathcal{F}_1$ for some constants $C_5^\prime, C_6^\prime>0$  and this bound tends to 0. (\ref{eq:Q-clt-bound-1}) is due to (\ref{eq:Q-2nd-1})-(\ref{eq:Q-2nd-3}) and (\ref{eq:Q-4th-1})-(\ref{eq:Q-4th-3}). (\ref{eq:Q-clt-bound-2}) is a consequence of Lemma \ref{lem:sup-A-u}. 

Now we turn to $L\nu_{r^*}(u_{r^*}^*)$. Let $\mathcal{F}_2$ be the event on which the following holds:
\begin{align*}
&\sup_{u\in[0,1]}\sum_{k\neq i,j}(A_{ik}G_{i,j,k,\theta^*,r^*}(1-u)+A_{jk}G_{i,j,k,\theta^*,r^*}(u))\\
&\leq(1+\delta_1^\prime)2p\sum_{k\neq i,j}G_{i,j,k,\theta^*, r^*}(1/2).
\end{align*}
By Lemma \ref{lem:sup-A-G}, there exists $\delta_1^\prime=o(1)$ independent of $i,j,\theta^*,r^*$ such that $\mathcal{F}_2$ holds with probability at least $1-O(n^{-10})$. Then, on this event,
\begin{align}
\nonumber&\nu_{r^*}(u_{r^*})\geq-\sup_{u\in[0,1]}\left(-A_{ij}\nu_{0,r^*}(u)-\sum_{k\neq i,j}(A_{ik}\nu_{k,r^*}(u)+A_{jk}\bar{\nu}_{k,r^*}(u))\right)\\
\nonumber&\geq-A_{ij}\sup_{u\in[0,1]}\log\frac{1+e^{\theta_a^*-\theta_b^*}}{e^{u(\theta_a^*-\theta_b^*)}+e^{(1-u)(\theta_a^*-\theta_b^*)}}-\sup_{u\in[0,1]}\sum_{k\neq i,j}(A_{ik}G_{i,j,k,\theta^*,r^*}(1-u)+A_{jk}G_{i,j,k,\theta^*,r^*}(u))\\
&\geq-C_7^\prime\abs{\theta_a^*-\theta_b^*}^2-(1+\delta_1^\prime)2p\sum_{k\neq i,j}G_{i,j,k,\theta^*, r^*}(1/2)\label{eq:ij-bound}\\
&\geq-(1+\delta_2^\prime)2p\sum_{k\neq i,j}G_{i,j,k,\theta^*, r^*}(1/2)\label{eq:ij-absorb}
\end{align}
for some $\delta_2^\prime=o(1)$. (\ref{eq:ij-bound}) comes from 
$$\log\frac{1+e^{\theta_a^*-\theta_b^*}}{e^{u(\theta_a^*-\theta_b^*)}+e^{(1-u)(\theta_a^*-\theta_b^*)}}\leq\log\cosh\frac{\theta_a^*-\theta_b^*}{2}\leq\cosh\frac{\theta_a^*-\theta_b^*}{2} - 1\leq C_7^\prime\abs{\theta_a^*-\theta_b^*}^2$$
for some constant $C_7^\prime>0$ when $\abs{\theta_a^*-\theta_b^*}\leq C$. (\ref{eq:ij-absorb}) is because of Lemma \ref{lem:G-prop} and $\frac{p}{\beta\vee1/n}\gg1$. Note that $\delta_2^\prime$ can also be chosen independent of $i,j,\theta^*,r^*$.

Thus, we can further lower bound (\ref{eq:change-measure}) on $\mathcal{F}_1\cap\mathcal{F}_2$:
\begin{align}
\nonumber&\p_{A}\left(\sum_{l=1}^L\left(A_{ij}Z_{0l}+\sum_{k\neq i,j}A_{ik}Z_{kl}+A_{jk}\bar{Z}_{kl}\right)\geq0\right)\\
\nonumber&\geq C_4^\prime\exp\left(-\sqrt{C_3^\prime Lp\frac{(\theta_a^*-\theta_b^*)^2}{\beta\vee1/n}}+L\nu_{r^*}(u_{r^*}^*)\right)\\
\nonumber&\geq C_4^\prime\exp\left(-\sqrt{C_3^\prime Lp\frac{(\theta_a^*-\theta_b^*)^2}{\beta\vee1/n}}-L\sup_{u\in[0,1]}\left(-A_{ij}\nu_{0,r^*}(u)-\sum_{k\neq i,j}(A_{ik}\nu_{k,r^*}(u)+A_{jk}\bar{\nu}_{k,r^*}(u))\right)\right)\\
\nonumber&\geq C_4^\prime\exp\left(-\sqrt{C_3^\prime Lp\frac{(\theta_a^*-\theta_b^*)^2}{\beta\vee1/n}}-(1+\delta_2^\prime)2Lp\sum_{k\neq i,j}G_{i,j,k,\theta^*, r^*}(1/2)\right).
\end{align}
which finishes the proof.
\end{proof}

Now we are ready to prove the lower bound part of Theorem \ref{thm:BTL-minimax}.
\begin{proof}[Proof of Theorem \ref{thm:BTL-minimax} (lower bound)]
We remark that $p\geq c_0(\beta\vee\frac{1}{n})\log n$ necessarily implies $0<\beta=o(1)$.  It also implies $n\wedge \beta^{-1}\gg 1$ and $\frac{\beta\vee1/n}{Lp}=o(1)$ which will be useful in the proof. Recall the definition of  $r^{*(i,j)}$ in (\ref{eqn:r_star_ij_def}) for any $r^{*}\in\S_n$ and $i,j\in[n]$ such that $i\neq j$.

For any $\theta^*\in\Theta_n(\beta, C_0)$, we have
\begin{align}
\nonumber&\inf_{\wh{r}}\sup_{r^*\in\S_n}\E_{(\theta^*,r^*)}\left[\k(\wh{r},r)\right]\\
\nonumber&\geq\inf_{\wh{r}}\frac{1}{n!}\sum_{r^*\in\S_n}\frac{1}{n}\sum_{1\leq i< j\leq n}\p_{(\theta^*,r^*)}\left(\wh{r}_i<\wh{r}_j,r_i^*>r_j^*\right)+\p_{(\theta^*,r^*)}\left(\wh{r}_i>\wh{r}_j,r_i^*<r_j^*\right)\\
\nonumber&=\inf_{\wh{r}}\frac{1}{n}\sum_{1\leq i<j\leq n}\frac{1}{n!}\sum_{r^*\in\S_n}\p_{(\theta^*,r^*)}\left(\wh{r}_i<\wh{r}_j,r_i^*>r_j^*\right)+\p_{(\theta^*,r^*)}\left(\wh{r}_i>\wh{r}_j,r_i^*<r_j^*\right)\\
\nonumber&\geq\frac{1}{n}\sum_{1\leq a<b\leq n}\frac{2}{n(n-1)}\sum_{1\leq i<j\leq n}\\
\nonumber&\quad\quad\quad\quad\quad\quad\quad\frac{1}{(n-2)!}\sum_{r^*:r_i^*=a,r_j^*=b}\inf_{\wh{r}}\frac{\p_{(\theta^*,r^*)}(\wh{r}\neq r^*)+\p_{(\theta^*,r^{*(i,j)})}(\wh{r}\neq r^{*(i.j)})}{2}\\
\nonumber&\geq\frac{1}{2n}\sum_{a=1}^n\frac{2}{n(n-1)}\sum_{1\leq i<j\leq n}\sum_{b\in[n]\backslash\{a\}:\abs{a-b}\leq 1\vee \sqrt{\frac{C_1^\prime(\beta\vee n^{-1})}{Lp\beta^2}}}\\
\nonumber&\quad\quad\quad\quad\quad\quad\quad\frac{1}{(n-2)!}\sum_{r^*:r_i^*=a,r_j^*=b}\inf_{\wh{r}}\frac{\p_{(\theta^*,r^*)}(\wh{r}\neq r^*)+\p_{(\theta^*,r^{*(i,j)})}(\wh{r}\neq r^{*(i.j)})}{2},
\end{align}
where $C_1^\prime>0$ is a constant. Note that for any $a,b\in[n]$ such that $\abs{a-b}\leq 1\vee \sqrt{\frac{C_1^\prime(\beta\vee n^{-1} )}{Lp\beta^2}}$, we have $\abs{\theta^*_a - \theta^*_b}\leq C_0 \br{\beta  \vee \sqrt{\frac{C_1^\prime(\beta\vee n^{-1} )}{Lp}}}=o(1)$. Then  by Lemma \ref{lem:BTL-two-point}, we have
\begin{align}
\nonumber&\inf_{\wh{r}}\sup_{r^*\in\S_n}\E_{(\theta^*,r^*)}\left[\k(\wh{r},r)\right]\geq\frac{1}{2n}\sum_{a=1}^n\frac{2}{n(n-1)}\sum_{1\leq i<j\leq n}\sum_{b\in[n]\backslash\{a\}:\abs{a-b}\leq 1\vee \sqrt{\frac{C_1^\prime(\beta\vee n^{-1} )}{Lp\beta^2}}}\\
&\quad\quad C_3^\prime\exp\left(-\sqrt{\frac{C_2^\prime Lp(\theta_a^*-\theta_b^*)^2}{\beta\vee n^{-1} }}-(1+\delta_1^\prime)Lp\sum_{k\neq a, b}\log\frac{(1+e^{\theta_a^*-\theta_k^*})(1+e^{\theta_{b}^*-\theta_k^*})}{\left(1+e^{\frac{\theta_a^*+\theta_{b}^*}{2}-\theta_k^*}\right)^2}\right),\label{eq:use-BTL-two-point}
\end{align}
for some constant $C_2^\prime,C_3^\prime>0$ and some $\delta_1^\prime=o(1)$. We are going to simplify the second term in the exponent. We have
\begin{align}
\sum_{k\neq a, b}\log\frac{(1+e^{\theta_a^*-\theta_k^*})(1+e^{\theta_{b}^*-\theta_k^*})}{\left(1+e^{\frac{\theta_a^*+\theta_{b}^*}{2}-\theta_k^*}\right)^2}&\leq\sum_{k\neq a, b}\frac{e^{\theta_a^*-\theta_k^*}+e^{\theta_b^*-\theta_k^*}-2e^{\frac{\theta_a^*+\theta_{b}^*}{2}-\theta_k^*}}{\left(1+e^{\frac{\theta_a^*+\theta_{b}^*}{2}-\theta_k^*}\right)^2}\label{eq:log-ineq}\\
\nonumber&=2\sum_{k\neq a, b}\frac{(\cosh\frac{\theta_a^*-\theta_{b}^*}{2}-1)e^{\frac{\theta_a^*+\theta_{b}^*}{2}-\theta_k^*}}{\left(1+e^{\frac{\theta_a^*+\theta_{b}^*}{2}-\theta_k^*}\right)^2}\\
&=(1+\delta_2^\prime)\frac{(\theta_a^*-\theta_{b}^*)^2}{4}\sum_{k\neq a, b}\psi^{\prime}\left(\frac{\theta_a^*+\theta_{b}^*}{2}-\theta_k^*\right)\label{eq:cosh-expand}\\
&=(1+\delta_3^\prime)\frac{(\theta_a^*-\theta_{b}^*)^2}{4}\sum_{k\neq a}\psi^{\prime}(\theta_a^*-\theta_k^*)\label{eq:a-a+1-close}
\end{align}
for some $\delta_2^\prime=o(1), \delta_3^\prime=o(1)$. Here (\ref{eq:log-ineq}) uses $\log(1+x)\leq x$. In  (\ref{eq:cosh-expand}) we use $\theta_a^*-\theta_{b}^*=o(1)$ and $\cosh x-1=(1+O(x))\frac{x^2}{2}$ when $x=o(1)$.
 From Lemma \ref{lem:sum-psi-prime}  we know $\sum_{k\neq a}\psi^{\prime}(\theta_a^*-\theta_k^*)\asymp n\wedge \beta^{-1}\gg1$. Then using this and the fact     $\sup_x\abs{\frac{\psi^\prime(x+ t )}{\psi^\prime(x)}-1}=O( t )$ when $ t =o(1)$, we obtain (\ref{eq:a-a+1-close}). Using $\sum_{k\neq a}\psi^{\prime}(\theta_a^*-\theta_k^*)\asymp n\wedge \beta^{-1}$ again and the fact $\abs{\theta^*_a - \theta^*_b} \geq \beta$,  there exists a constant $C_4^\prime>0$ such that
$$\frac{\sqrt{\frac{C_2^\prime Lp(\theta_a^*-\theta_{b}^*)^2}{\beta\vee n^{-1} }}}{\frac{Lp(\theta_a^*-\theta_{b}^*)^2}{4}\sum_{k\neq a}\psi^{\prime}(\theta_a^*-\theta_k^*)}\leq \frac{C_4^\prime}{\sqrt{\frac{Lp\beta^2}{\beta\vee n^{-1} }}}.$$
Therefore, for an arbitrarily small constant $\delta>0$,  we have constant $C_5^\prime>0$, such that (\ref{eq:use-BTL-two-point}) can be lower bounded by
\begin{align}
\nonumber&C_3^{\prime}\exp\left(-\sqrt{\frac{C_2^\prime Lp(\theta_a^*-\theta_{b}^*)^2}{\beta\vee n^{-1} }}-(1+\delta_3^\prime)\frac{Lp(\theta_a^*-\theta_{b}^*)^2}{4}\sum_{k\neq a}\psi^{\prime}(\theta_a^*-\theta_k^*)\right)\\
\nonumber&\geq C_3^\prime\exp\left(-\left(1+\delta_3^\prime+\frac{C_4^\prime}{\sqrt{\frac{Lp\beta^2}{\beta\vee n^{-1} }}}\right)\frac{Lp(\theta_a^*-\theta_{b}^*)^2}{4V_a(\theta^*)}\right)\\
&\geq C_5^\prime\exp\left(-(1+\delta)\frac{Lp(\theta_a^*-\theta_{b}^*)^2}{4V_a(\theta^*)}\right).\label{eqn:noname7}
\end{align}
So far, we obtain
\begin{align}
\nonumber&\inf_{\wh{r}}\sup_{r^*\in\S_n}\E_{(\theta^*,r^*)}\left[\k(\wh{r},r)\right]\\
&\geq\frac{1}{2n}\sum_{a=1}^n\frac{2}{n(n-1)}\sum_{1\leq i<j\leq n}\sum_{b\in[n]\backslash\{a\}:\abs{a-b}\leq 1\vee \sqrt{\frac{C_1^\prime(\beta\vee n^{-1} )}{Lp\beta^2}}} C_5^\prime\exp\left(-(1+\delta)\frac{Lp(\theta_a^*-\theta_{b}^*)^2}{4V_a(\theta^*)}\right)\label{eqn:noname8}\\
\nonumber &\geq\frac{1}{2n}\sum_{a=1}^n\frac{2}{n(n-1)}\sum_{1\leq i<j\leq n}\sum_{b=a+1} C_5^\prime\exp\left(-(1+\delta)\frac{Lp(\theta_a^*-\theta_{b}^*)^2}{4V_a(\theta^*)}\right) \\
\nonumber &\geq \frac{C_5^\prime}{2n}\sum_{a=1}^n \exp\left(-(1+\delta)\frac{Lp(\theta_a^*-\theta_{a+1}^*)^2}{4V_a(\theta^*)}\right).
\end{align}
Hence, we obtain the exponential rate. 

In the following we are going to derive  the polynomial rate for the regime $\frac{Lp\beta^2}{\beta\vee n^{-1}}\leq 1$. 
Note that for any $a,b\in[n]$ such that $\abs{a-b}\leq 1\vee \sqrt{\frac{C_1^\prime(\beta\vee n^{-1})}{Lp\beta^2}}$, we have
\begin{align*}
\frac{Lp(\theta_a^*-\theta_{b}^*)^2}{4V_a(\theta^*)} \lesssim \frac{Lp\beta^2}{n \wedge \beta^{-1}} \br{1\vee \frac{\beta \vee n^{-1}}{Lp\beta^2}} \lesssim 1.
\end{align*}
Then from (\ref{eqn:noname8}), there exist some constant $C_6^\prime,C_7^\prime>0$ such that
\begin{align*}
\inf_{\wh{r}}\sup_{r^*\in\S_n}\E_{(\theta^*,r^*)}\left[\k(\wh{r},r)\right]&\geq\frac{1}{2n}\sum_{a=1}^n\frac{2}{n(n-1)}\sum_{1\leq i<j\leq n}\sum_{b\in[n]\backslash\{a\}:\abs{a-b}\leq 1\vee \sqrt{\frac{C_1^\prime(\beta\vee n^{-1})}{Lp\beta^2}}} C_6^\prime\\
&\geq \frac{C_6^\prime}{2}  \br{1\vee \sqrt{\frac{C_1^\prime(\beta\vee n^{-1})}{Lp\beta^2}}} \\
& \geq C_7^\prime  \left(n\wedge\sqrt{\frac{\beta\vee n^{-1}}{Lp\beta^2}}\right),
\end{align*}
where the last inequality is due to $\frac{Lp\beta^2}{\beta\vee n^{-1}}\leq 1$ and the fact that the loss is at most $n$. 
\end{proof}

\subsection{Proof of Theorem \ref{thm:alg-1}}\label{sec:pf-alg-1}

The following two lemmas are needed for the proof of Theorem \ref{thm:alg-1}.
Recall that $\bar{y}_{ij}^{(1)}=\frac{1}{L_1}\sum_{l=1}^{L_1}y_{ijl}, i\neq j\in[n]$.
\begin{lemma}\label{lem:ave-concentration}
There exists a constant $C_1>0$ such that for any $\theta^*\in\Theta_n(\beta, C_0)$ and $r^*\in\S_n$, 
$$\max_{i\in[n], j\in[n], i\neq j}\abs{\bar{y}_{ij}^{(1)}-\psi(\theta_{r_i^*}^*-\theta_{r_j^*}^*)}\leq C_1\sqrt{\frac{\log n}{L_1}}$$
holds with probability at least $1-O(n^{-10})$.
\end{lemma}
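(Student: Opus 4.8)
The plan is a textbook concentration argument: a Hoeffding bound for each fixed pair, then a union bound over the $O(n^2)$ pairs. First I would fix an arbitrary pair $i\neq j\in[n]$. By the data-generating process (\ref{eq:BTL-theta}), the raw observations $y_{ij1},\dots,y_{ijL_1}$ are i.i.d. $\mathrm{Bernoulli}(\psi(\theta_{r_i^*}^*-\theta_{r_j^*}^*))$, so $\bar y_{ij}^{(1)}=\frac{1}{L_1}\sum_{l=1}^{L_1}y_{ijl}$ is an average of $L_1$ independent $[0,1]$-valued random variables with common mean $\psi(\theta_{r_i^*}^*-\theta_{r_j^*}^*)$. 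Hoeffding's inequality then gives, for every $t>0$,
$$\Prob\left(\abs{\bar y_{ij}^{(1)}-\psi(\theta_{r_i^*}^*-\theta_{r_j^*}^*)}>t\right)\le 2\exp\left(-2L_1 t^2\right),$$
and, importantly, this tail bound is completely free of $\theta^*$ and $r^*$ (it only sees the fact that the summands lie in $[0,1]$).

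Next I would take $t=C_1\sqrt{\log n/L_1}$ for a sufficiently large absolute constant $C_1$ (any $C_1\ge\sqrt{6}$ suffices), so the right-hand side becomes $2n^{-2C_1^2}$. A union bound over the at most $n^2$ ordered pairs $(i,j)$ with $i\neq j$ yields
$$\Prob\left(\max_{i\neq j}\abs{\bar y_{ij}^{(1)}-\psi(\theta_{r_i^*}^*-\theta_{r_j^*}^*)}>C_1\sqrt{\frac{\log n}{L_1}}\right)\le 2n^{2-2C_1^2}=O(n^{-10}),$$
which is exactly the assertion. If one wanted a sharper, variance-adaptive estimate one could instead invoke Bernstein's inequality to get a bound of order $\sqrt{\psi(1-\psi)\log n/L_1}+\log n/L_1$, but no such refinement is needed for how this lemma is used later.

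I do not expect any genuine obstacle here; the only items demanding a little care are (i) that the constant $C_1$ must be fixed \emph{before} and independently of $\theta^*$ and $r^*$ — which is automatic because the Hoeffding tail does not involve the mean — and (ii) the bookkeeping of the union-bound exponent so that $2C_1^2-2\ge 10$. Downstream (e.g. in the proof of Theorem~\ref{thm:alg-1} and Lemma~\ref{lem:MLE-check}) this lemma is combined with the choice $L_1=\ceil{\sqrt{L\log n}}$, which together with $\frac{L}{\log n}\to\infty$ forces $\sqrt{\log n/L_1}\to 0$, allowing one to sandwich data-defined events such as $\{(i,j)\in\mathcal{E}\}$ and $\{\bar y_{ij}^{(1)}\le\psi(-2M)\}$ between events phrased purely in terms of $|\theta_{r_i^*}^*-\theta_{r_j^*}^*|$; the present statement supplies only the underlying uniform deviation bound.
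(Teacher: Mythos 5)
Your proof is correct and takes exactly the approach the paper intends — the paper's own proof is the single sentence ``This can be seen directly by standard Hoeffding's inequality and union bound argument,'' which is precisely your Hoeffding bound per pair followed by a union bound over the $O(n^2)$ pairs. Your explicit bookkeeping (tail bound independent of $\theta^*,r^*$; choosing $C_1$ so that $2C_1^2-2\ge 10$) fills in the details the paper leaves implicit.
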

\begin{proof}
This can be seen directly by standard Hoeffding's inequality and union bound argument.
\end{proof}

\begin{lemma}\label{cor:ave-con-cor}
For $L_1$ such that $\frac{L_1}{\log n}\to\infty$ and constant $M\geq1$, there exists $0<\delta_0=o(1)$ and $0<\delta_1=o(1)$ such that for any $\theta^*\in\Theta_n(\beta, C_0)$, any $r^*\in\S_n$, 
$$\max_{i\in[n], j\in[n], i\neq j}\abs{\bar{y}_{ij}^{(1)}-\psi(\theta_{r_i^*}^*-\theta_{r_j^*}^*)}\leq \delta_0$$
and
$$\bigcap_{i=1}^n\left\{\underline{\mathcal{E}_{1,i}}\subset\mathcal{E}_{1,i}\subset\overline{\mathcal{E}_{1,i}}\right\}$$
hold with probability at least $1-O(n^{-10})$, where
\begin{eqnarray*}
\mathcal{E}_{1,i} &=& \left\{j\in[n]: \bar{y}_{ij}^{(1)}\leq\psi(-2M)\right\}, \\
\underline{\mathcal{E}_{1,i}} &=& \left\{j\in[n]: \theta_{r_j^*}^*\geq\theta_{r_i^*}^*+2M+\delta_1\right\}, \\
\overline{\mathcal{E}_{1,i}} &=& \left\{j\in[n]: \theta_{r_j^*}^*\geq\theta_{r_i^*}^*+2M-\delta_1\right\}.
\end{eqnarray*}
\end{lemma}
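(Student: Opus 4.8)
The plan is to deduce both assertions from the uniform concentration bound of Lemma~\ref{lem:ave-concentration}, using only that the sigmoid $\psi$ is strictly increasing and, on $(-\infty,0)$, convex, so that it behaves like a bi-Lipschitz map in a fixed neighbourhood of the constant threshold $t=-2M$. The probability bound $1-O(n^{-10})$ and the fact that the good event does not depend on $(\theta^*,r^*)$ will both be inherited directly from Lemma~\ref{lem:ave-concentration}.

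Concretely, I would first set $\delta_0 := C_1\sqrt{\log n/L_1}$, where $C_1$ is the constant from Lemma~\ref{lem:ave-concentration}; since $L_1/\log n\to\infty$ we have $\delta_0=o(1)$, and $\delta_0$ depends on none of $\theta^*,r^*$. Lemma~\ref{lem:ave-concentration} then supplies an event $\mathcal{G}$ with $\mathbb{P}(\mathcal{G})\ge 1-O(n^{-10})$ on which $\max_{i\ne j}|\bar{y}_{ij}^{(1)}-\psi(\theta^*_{r_i^*}-\theta^*_{r_j^*})|\le\delta_0$, which is precisely the first display; it remains to verify the chain of inclusions on $\mathcal{G}$. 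For this I would take $\delta_1 := 2\delta_0/\psi'(-2M)=o(1)$; note $\psi'(-2M)>0$ is a constant depending only on $M$, and $-2M\pm\delta_1<0$ for $n$ large.

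On $\mathcal{G}$ the two inclusions are elementary. If $j\in\mathcal{E}_{1,i}$, then $\psi(\theta^*_{r_i^*}-\theta^*_{r_j^*})\le\bar{y}_{ij}^{(1)}+\delta_0\le\psi(-2M)+\delta_0<1/2$ for $n$ large, so $\theta^*_{r_i^*}-\theta^*_{r_j^*}<0$, and the tangent-line inequality $\psi(y)\ge\psi(-2M)+\psi'(-2M)(y+2M)$ (valid for the convex function $\psi$ on $(-\infty,0)$) forces $\psi'(-2M)\,(\theta^*_{r_i^*}-\theta^*_{r_j^*}+2M)\le\delta_0$, hence $\theta^*_{r_i^*}-\theta^*_{r_j^*}\le-2M+\delta_0/\psi'(-2M)\le-2M+\delta_1$, i.e.\ $j\in\overline{\mathcal{E}_{1,i}}$. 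Conversely, if $j\in\underline{\mathcal{E}_{1,i}}$ then $\theta^*_{r_i^*}-\theta^*_{r_j^*}\le-2M-\delta_1<0$, and convexity gives $\psi(-2M)-\psi(-2M-\delta_1)\ge\psi'(-2M-\delta_1)\,\delta_1\ge\tfrac12\psi'(-2M)\,\delta_1=\delta_0$ for $n$ large (since $\psi'(-2M-\delta_1)\to\psi'(-2M)$); monotonicity then yields $\psi(\theta^*_{r_i^*}-\theta^*_{r_j^*})\le\psi(-2M-\delta_1)\le\psi(-2M)-\delta_0$, so $\bar{y}_{ij}^{(1)}\le\psi(\theta^*_{r_i^*}-\theta^*_{r_j^*})+\delta_0\le\psi(-2M)$, i.e.\ $j\in\mathcal{E}_{1,i}$. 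Intersecting over $i\in[n]$ finishes the argument, all of it on $\mathcal{G}$.

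There is no genuine obstacle; the only points needing a little care are that the single choice $\delta_1=2\delta_0/\psi'(-2M)$ must make both inclusions work at once (which is why I keep every constant tied to $M$ and $C_1$ only) and that the uniformity over $(\theta^*,r^*)$ must be preserved (which holds because $C_1$ in Lemma~\ref{lem:ave-concentration} is universal and $\delta_0,\delta_1$ are explicit functions of $n,L_1,M$ alone). One could equivalently phrase the two inclusions as consequences of $\psi$ being bi-Lipschitz on a fixed neighbourhood of $-2M$, avoiding the explicit convexity estimates.
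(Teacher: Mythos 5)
Your argument is correct and follows the same route the paper takes: the paper's proof is the one-liner that Lemma~\ref{lem:ave-concentration} plus $M=O(1)$ directly yields the result, and you have simply supplied the explicit choices $\delta_0=C_1\sqrt{\log n/L_1}$, $\delta_1=2\delta_0/\psi'(-2M)$ and the elementary monotonicity/convexity bookkeeping that the paper leaves implicit.
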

\begin{proof}
This is a direct consequence of Lemma \ref{lem:ave-concentration} and $M=O(1)$.
\end{proof}

Now we are ready to prove Theorem \ref{thm:alg-1}.
\begin{proof}[Proof of Theorem \ref{thm:alg-1}]

Let $\mathcal{F}^{(0)}$ be the event on which Lemma \ref{cor:ave-con-cor} holds. We will always work on this high probability event throughout the proof. Also, we will assume the regime $n\beta\to\infty$. The case $\beta\lesssim1/n$ is trivial since we only have one league $S_1=[n]$ if $M$ is chosen to be a large enough constant. 

To start the exposition, we define a series of quantities iteratively  for all $k\in[K-1]$, with the base case $\underline{S_0}=\overline{S_0}=S_0^\prime=\wt{S}_0=\emptyset, \underline{u^{(0)}}=\overline{u^{(0)}}=0$. Let
$$\underline{t_i^{(k)}}=\abs{\left\{j\in[n]\backslash\wt{S}_{k-1}:j\in\underline{\mathcal{E}_{1,i}}\right\}},$$
$$ \overline{t_i^{(k)}}=\abs{\left\{j\in[n]\backslash\wt{S}_{k-1}:j\in\overline{\mathcal{E}_{1,i}}\right\}},$$
\begin{align}
& \underline{S_k}=\left\{i\in[n]\backslash\wt{S}_{k-1}:\left(1+\frac{0.11}{C_0^2}\right)p\overline{t_i^{(k)}}\leq h\right\},\label{eqn:def_underline_S_k} \\
&  \overline{S_k}=\left\{i\in[n]\backslash\wt{S}_{k-1}:\left(1-\frac{0.11}{C_0^2}\right)p\underline{t_i^{(k)}}\leq h\right\}, \label{eqn:def_overline_S_k}
\end{align}
$$\overline{u^{(k)}}=\max\left\{r_i^*:i\in[n]\backslash\wt{S}_{k-1}, \overline{t_i^{(k)}}\leq\frac{M}{\left(1-\frac{0.12}{C_0^2}\right)\beta}\right\},$$
$$\underline{u^{(k)}}=\max\left\{r_i^*:i\in[n]\backslash\wt{S}_{k-1}, \overline{t_i^{(k)}}\leq\frac{M}{\left(1+\frac{0.11}{C_0^2}\right)\beta}\right\},$$
$$w_i^{(k)\prime}=\sum_{j\in\underline{S_k}\cap\overline{\mathcal{E}_{1,i}}}A_{ij}\indc{j\in\mathcal{E}_{1,i}},$$
$$S_k^\prime = \left\{i\in[n]\backslash\wt{S}_{k-1}:w_i^{(k)\prime}\leq h\right\}, $$
$$\wt{S}_{k}=\uplus_{m=1}^{k}S_m^\prime.$$

We make several remarks about these definition. The above definitions have essentially constructed another partition $S_1^\prime, S_2^\prime, ...$ using $w_i^{(k)\prime}$ comparing to Algorithm \ref{alg:partition} using $w_i^{(k)}$. The relationship between $S_k$ and $S_k^\prime$ will be made clear during the exposition. In fact, they will be equal with high probability. We should keep in mind that the partition using $w_i^{(k)\prime}$ is not a bona fide one since the definition uses $\overline{\mathcal{E}_{1,i}}$ and $\underline{S_k}$ which involve the knowledge of $\theta^*$. However, this can be used in theoretical exploration. Our strategy is to show certain properties hold for partitions $S_k^\prime$, then $S_k=S_k^\prime$ with high probability and  thus inherits those properties. 

We start with some simple but crucial facts which will act as building blocks in the proof.
\begin{itemize}
\item
$\overline{t_i^{(k)}}$ and $\underline{t_i^{(k)}}$ has the following monotonicity property: for any $i,j\in[n]\backslash\wt{S}_{k-1}$ such that $r_i^*\leq r_j^*$, 
\begin{equation}
\overline{t_i^{(k)}}\leq\overline{t_j^{(k)}}, \underline{t_i^{(k)}}\leq\underline{t_j^{(k)}}.\label{eq:t-mono-k}
\end{equation}
This is direct from the definition.
\item
For any $i\in[n]\backslash\wt{S}_{k-1}$,
\begin{equation}
0\leq \overline{t_i^{(k)}}-\underline{t_i^{(k)}}\leq\frac{2\delta_1}{\beta}+1,\label{eq:t-upper-lower-diff-k}
\end{equation}
which comes from
$\overline{t_i^{(k)}}-\underline{t_i^{(k)}}=\abs{\left\{j\in[n]\backslash\wt{S}_{k-1}:2M-\delta_1\leq\theta_{r_j^*}^*-\theta_{r_i^*}^*<2M+\delta_1\right\}}.$
\item We have
\begin{equation}
\left\{i\in[n]\backslash\wt{S}_{k-1}:r_i^*\leq\underline{u^{(k)}}\right\}=\underline{S_k}\subset\overline{S_k}\subset\left\{i\in[n]\backslash\wt{S}_{k-1}:r_i^*\leq\overline{u^{(k)}}\right\}.\label{eq:S-u-relation-k}
\end{equation}
Here $\underline{S_k}\subset\overline{S_k}$ is due to monotonicity (\ref{eq:t-mono-k}) and $\underline{t_i^{(k)}}\leq\overline{t_i^{(k)}}$ by definition.  Recall $h=pM/\beta$.  Using (\ref{eq:t-upper-lower-diff-k}), for any $i\in \overline{S_k}$, we have $\overline{t_i^{(k)}} \leq \underline{t_i^{(k)}} + \frac{2\delta_1}{\beta}+1 \leq \frac{h}{\left(1-\frac{0.11}{C_0^2}\right)p}+ \frac{2\delta_1}{\beta}+1 \leq \frac{M}{\left(1-\frac{0.12}{C_0^2}\right)\beta}$. Hence, we have $\overline{S_k}\subset\left\{i\in[n]\backslash\wt{S}_{k-1}:r_i^*\leq\overline{u^{(k)}}\right\}$.
\item
$\underline{t_i^{(k)}}$, $\overline{t_i^{(k)}}$, $\underline{S_k}$, $\overline{S_k}, \underline{u^{(k)}}, \overline{u^{(k)}}$ are measurable with respect to the $\sigma$-algebra generated by $\wt{S}_{k-1}$. This is direct from the definition.
\end{itemize}
Now, we will prove the following statements by induction on $k$:
\begin{itemize}
\item
With probability at least $1-O(kn^{-10})$, 
\begin{equation}
\underline{S_{k^\prime}}\subset S_{k^\prime}\subset\overline{S_{k^\prime}}
\label{eq:u-contain-S-k}
\end{equation}
for all $0\leq k^\prime\leq k$.
\item
With probability at least $1-O(kn^{-10})$, 
\begin{equation}
\abs{\underline{S_{k^\prime}}}\geq\left(\frac{1.7}{C_0}+\frac{1}{1+\frac{0.11}{C_0^2}}\right)\frac{M}{\beta}\label{eq:S-k-card-lower}
\end{equation}
for all $1\leq k^\prime\leq k$ and $\abs{S_0}=0$.
\item
With probability at least $1-O(kn^{-10})$, 
\begin{equation}
\abs{\overline{S_{k^\prime}}\backslash\underline{S_{k^\prime}}}\leq\overline{u^{(k^\prime)}}-\underline{u^{(k^\prime)}}\leq\frac{0.29M}{C_0\beta}\label{eq:S-upper-lower-diff-k}
\end{equation}
for all $0\leq k^\prime\leq k$.
\item
With probability at least $1-O(kn^{-10})$, 
\begin{equation}
\abs{\overline{S_{k^\prime}}}\leq\left(2+\frac{0.29}{C_0}+\frac{1}{1-\frac{0.12}{C_0^2}}\right)\frac{M}{\beta}\label{eq:S-k-card-upper}
\end{equation}
for all $0\leq k^\prime\leq k$.
\item
With probability at least $1-O(kn^{-10})$,
\begin{equation}
S_{k^\prime}=S_{k^\prime}^\prime\label{eq:S-S-prime-equal-k}
\end{equation}
for all $0\leq k^\prime\leq k$.
\end{itemize}
Now, suppose (\ref{eq:u-contain-S-k}) - (\ref{eq:S-S-prime-equal-k}) hold until $k-1$, which is the case for $k=1$. 
 In the following, we are going to establish (\ref{eq:u-contain-S-k}) - (\ref{eq:S-S-prime-equal-k})  for $k$ one by one.

~\\
\emph{(Establishment of  (\ref{eq:u-contain-S-k})).}
Recall that we assume $\mathcal{F}^{(0)}$ holds. On the intersection of all high probability events before $k$, we have $\wt{S}_{k-1}=S_1\cup \ldots\cup  S_{k-1}$.  
We  sandwich $w_i^{(k)}$ by 
$$\underline{w_i^{(k)}}=\sum_{j\in[n]\backslash\wt{S}_{k-1}}A_{ij}\indc{j\in\underline{\mathcal{E}_{1,i}}}\leq w_i^{(k)}\leq\sum_{j\in[n]\backslash\wt{S}_{k-1}}A_{ij}\indc{j\in\overline{\mathcal{E}_{1,i}}}=\overline{w_i^{(k)}}.$$
Recall the definition of $S_k$ in Algorithm \ref{alg:partition}. Then we have $ S_k =   \left\{i\in[n]\backslash\wt{S}_{k-1}: w_i^{(k)}\leq h\right\}$.
Hence, $ \left\{i\in[n]\backslash\wt{S}_{k-1}: \overline{w_i^{(k)}}\leq h\right\} \subset S_k\subset  \left\{i\in[n]\backslash\wt{S}_{k-1}: \underline{w_i^{(k)}}\leq h\right\}$. To prove (\ref{eq:u-contain-S-k}), by the definitions in  (\ref{eqn:def_underline_S_k}) and (\ref{eqn:def_overline_S_k}), we only need to show
\begin{align*}
 &\left\{i\in[n]\backslash\wt{S}_{k-1}: \underline{w_i^{(k)}}\leq h\right\} \subset \left\{i\in[n]\backslash\wt{S}_{k-1}:\left(1-\frac{0.11}{C_0^2}\right)p\underline{t_i^{(k)}}\leq h\right\},\\
 &\left\{i\in[n]\backslash\wt{S}_{k-1}:\left(1+\frac{0.11}{C_0^2}\right)p\overline{t_i^{(k)}}\leq h\right\} \subset  \left\{i\in[n]\backslash\wt{S}_{k-1}: \overline{w_i^{(k)}}\leq h\right\},
\end{align*}
a sufficient condition of which is the following event:
\begin{align*}
&\mathcal{F}^{(k)}=\left\{\forall i\in[n]\backslash\wt{S}_{k-1} \text{ such that }p\underline{t_i^{(k)}}\leq  \frac{h}{2} :  \underline{w_i^{(k)}}\leq h\right\}\\
&\quad\quad\bigcap\left\{\forall i\in[n]\backslash\wt{S}_{k-1} \text{ such that }p\underline{t_i^{(k)}}>  \frac{h}{2} : \left(1-\frac{0.11}{C_0^2}\right)p\underline{t_i^{(k)}} \leq \underline{w_i^{(k)}}\right\} \\
&\quad\quad\bigcap\left\{\forall i\in[n]\backslash\wt{S}_{k-1} \text{ such that }p\overline{t_i^{(k)}}\leq  \frac{h}{2} :  \overline{w_i^{(k)}}\leq h\right\}\\
&\quad\quad\bigcap\left\{\forall i\in[n]\backslash\wt{S}_{k-1} \text{ such that }p\overline{t_i^{(k)}}>  \frac{h}{2} :  \overline{w_i^{(k)}}\leq\left(1+\frac{0.11}{C_0^2}\right)p\overline{t_i^{(k)}}\right\}.
\end{align*}
Hence to prove (\ref{eq:u-contain-S-k}), we only need to analyze $\pbr{\mathcal{F}^{(k)}}$.


Note that for any $j\in[n]\backslash\wt{S}_{k-1}$ we have  $r^*_j>\underline{u^{(k-1)}}$ according to the definition of $\underline{S_{k-1}}$ in  (\ref{eq:S-u-relation-k}).
Thus
$$\underline{w_i^{(k)}}=\sum_{\substack{j\in[n]\backslash\wt{S}_{k-1}\\r^*_j>\underline{u^{(k-1)}}}}A_{ij}\indc{\theta_j^*\geq\theta_i^*+2M+\delta_1},$$
$$\overline{w_i^{(k)}}=\sum_{\substack{j\in[n]\backslash\wt{S}_{k-1}\\r^*_j>\underline{u^{(k-1)}}}}A_{ij}\indc{\theta_j^*\geq\theta_i^*+2M-\delta_1}.$$
On the other hand, recall that $w_i^{(k-1)\prime}=\sum_{j\in \underline{S_{k-1}}\cap\overline{\mathcal{E}_{1,i}}}A_{ij}\indc{j\in\mathcal{E}_{1,i}}$ which only  involves $A_{ij}$ such that $r^*_j\leq \underline{u^{(k-1)}}$ due to (\ref{eq:S-u-relation-k}). By  (\ref{eq:S-u-relation-k}) and induction hypothesis of (\ref{eq:u-contain-S-k}) we further know $\underline{u^{(1)}}\leq ...\leq \underline{u^{(k-1)}}$. As a result, $w_i^{(1)\prime},...,w_i^{(k-1)\prime}$ are independent of $\underline{w_i^{(k)}}, \overline{w_i^{(k)}}$. Since $\wt{S}_{k-1}$ is determined by $w_i^{(1)\prime},...,w_i^{(k-1)\prime}$, it is also independent of $\underline{w_i^{(k)}}, \overline{w_i^{(k)}}$.


Therefore, conditional on $\wt{S}_{k-1}$,  we have 
$$\underline{w_i^{(k)}}|\wt{S}_{k-1}\sim\text{Binomial}(\underline{t_i^{(k)}}, p),$$
$$\overline{w_i^{(k)}}|\wt{S}_{k-1}\sim\text{Binomial}(\overline{t_i^{(k)}}, p).$$
Recall that $C_0\geq 1$ is a constant and  $h=pM/\beta \gg \log n$ since $p/(\beta \log n)\rightarrow\infty$ by assumption. By Bernstein inequality for the Binomial distributions together with a union bound argument, we have $\pbr{\mathcal{F}^{(k)}|\wt{S}_{k-1}} \geq 1-O(n^{-10}).$ Since this holds for all $\wt{S}_{k-1}$, we have
\begin{align*}
\pbr{\mathcal{F}^{(k)}} \geq 1-O(n^{-10}).
\end{align*}
Therefore, we have proved  (\ref{eq:u-contain-S-k}). 

~\\
\emph{(Establishment of (\ref{eq:S-k-card-lower})).} We first present a simple fact from induction hypothesis:
\begin{align}
\left\{i\in[n], r_i^*\leq\underline{u^{(k-1)}}\right\}\subset\wt{S}_{k-1}\subset\left\{i\in[n], r_i^*\leq\overline{u^{(k-1)}}\right\}.\label{eqn:tilde_S_sandwitch}
\end{align}
The first containment  is because (\ref{eq:S-u-relation-k}) and (\ref{eq:u-contain-S-k}) hold up to $k-1$. To prove the second containment, we only need to show $\overline{u^{(1)}} \leq \ldots \leq \overline{u^{(k-1)}}$.
Notice that from (\ref{eq:S-k-card-lower}) and (\ref{eq:S-upper-lower-diff-k}) for $k-1$, we have $\abs{\underline{S_{k-1}}} \geq \overline{u^{(k-2)}}-\underline{u^{(k-2)}}$. On the other hand, from (\ref{eq:u-contain-S-k}) for $k-1$, we have $\abs{\underline{S_{k-1}}} \leq \abs{ \cbr{i\in[n]: r_i^*> \underline{u^{(k-2)}}, r^*_i \leq \overline{u^{(k-1)}}}}  \leq \overline{u^{(k-1)}} -  \underline{u^{(k-2)}}$. Hence, we have $\overline{u^{(k-1)}}  \geq \overline{u^{(k-2)}}$ and similarly we can show  $\overline{u^{(l+1)}}  \geq \overline{u^{(l)}}$ for any $l\leq k-2$, which proves $\overline{u^{(1)}} \leq \ldots \leq \overline{u^{(k-1)}}$.

Using (\ref{eqn:tilde_S_sandwitch}), we have 
\begin{align*}
&\abs{\underline{S_k}}=\abs{\left\{i\in[n]\backslash\wt{S}_{k-1}: \overline{t_i^{(k)}}\leq\frac{M}{\left(1+\frac{0.11}{C_0^2}\right)\beta}\right\}}\\
&\geq\abs{\left\{i\in[n]: r_i^*>\overline{u^{(k-1)}}, \abs{\left\{j\in[n]: r_j^*>\underline{u^{(k-1)}},\theta_{r_j^*}^*\geq\theta_{r_i^*}^*+2M-\delta_1\right\}}\leq\frac{M}{\left(1+\frac{0.11}{C_0^2}\right)\beta}\right\}}.
\end{align*}
For any $i\in[n]$, since $\theta^*\in \Theta_n(\beta,C_0)$, we have
\begin{align*}
\abs{\left\{j\in[n]: r_j^*>\underline{u^{(k-1)}},\theta_{r_j^*}^*\geq\theta_{r_i^*}^*+2M-\delta_1\right\}}&\leq  r_i^* -\left\lfloor \frac{2M-\delta_1}{C_0\beta}\right\rfloor - \underline{u^{(k-1)}}.
\end{align*}
Hence,
\begin{align*}
\abs{\underline{S_k}} &\geq\abs{\left\{i\in[n]: r_i^*>\overline{u^{(k-1)}}, r_i^*\leq\frac{M}{\left(1+\frac{0.11}{C_0^2}\right)\beta}+ \left\lfloor \frac{2M-\delta_1}{C_0\beta}\right\rfloor + \underline{u^{(k-1)}} \right\} } \\
& \geq  \frac{M}{\left(1+\frac{0.11}{C_0^2}\right)\beta}+ \left\lfloor \frac{2M-\delta_1}{C_0\beta}\right\rfloor + \underline{u^{(k-1)}}  - \overline{u^{(k-1)}}\\
&\geq\left(\frac{1.7}{C_0}+\frac{1}{1+\frac{0.11}{C_0^2}}\right)\frac{M}{\beta}.
\end{align*}

~\\
\emph{(Establishment of (\ref{eq:S-upper-lower-diff-k})).} From (\ref{eq:u-contain-S-k}), we have $\abs{\overline{S_k}\backslash\underline{S_k}}\leq\overline{u^{(k)}}-\underline{u^{(k)}}$. Hence, we only need to show $\overline{u^{(k)}}-\underline{u^{(k)}}\leq\frac{0.29M}{C_0\beta}$. 

We are going to prove 
\begin{align}
\theta^*_{ \overline{u^{(k-1)}}} \geq  \theta^*_{ \underline{u^{(k)}}} + 2M -\delta_1.\label{eqn:noname1}
\end{align}
First, by (\ref{eq:S-upper-lower-diff-k}) for $k-1$, (\ref{eq:S-k-card-lower}), and (\ref{eq:S-u-relation-k}), we have  $\abs{\left\{i\in [n]: \underline{u^{(k-1)}}\leq r_i^*\leq\underline{u^{(k)}}\right\}}\geq |\underline{S_k}|$ which leads to $\underline{u^{(k)}} \geq \overline{u^{(k-1)}}$. Let $b\in[n]$ be the index such that $r^*_b = \underline{u^{(k)}} + 1 $. Then it means $b \in [n]\backslash\wt{S}_{k-1}$ and $\overline{t^{(k)}_{b}}> \frac{M}{\left(1+\frac{0.11}{C_0^2}\right)\beta}$. By the definition of $\overline{t^{(k)}_i}$, for any $i\in[n]\backslash\wt{S}_{k-1}$, we have
\begin{align*}
\abs{\left\{j\in[n]: r^*_j \geq \underline{u^{(k-1)}},  \theta^*_{r^*_j} > \theta^*_{r^*_i} + 2M -\delta_1\right\}} & \geq \abs{\left\{j\in[n]\backslash\wt{S}_{k-1}:j\in\overline{\mathcal{E}_{1,i}}\right\}} = \overline{t^{(k)}_i},
\end{align*}
which implies $\theta^*_{\underline{u^{(k-1)}} +  \overline{t^{(k)}_i}} >\theta^*_{r^*_i} + 2M -\delta_1  $. This means
\begin{align*}
\theta^*_{\underline{u^{(k-1)}} } >\theta^*_{r^*_i} + 2M -\delta_1  +\overline{t^{(k)}_i}\beta.
\end{align*}
Considering the $b$ index here, we have
\begin{align}
 \theta^*_{ \underline{u^{(k-1)}}}   \geq  \theta^*_{\underline{u^{(k)}} + 1} + 2M -\delta_1 +\frac{M}{\left(1+\frac{0.11}{C_0^2}\right)}. \label{eqn:noname2}
\end{align}
Then using (\ref{eq:S-upper-lower-diff-k}) for $k-1$, we have
\begin{align}
 \theta^*_{ \overline{u^{(k-1)}}}   \geq  \theta^*_{\underline{u^{(k)}} + 1} + 2M -\delta_1 +\frac{M}{\left(1+\frac{0.11}{C_0^2}\right)} - 0.29M \geq \theta^*_{\underline{u^{(k)}} } + 2M -\delta_1,\label{eqn:noname3}
\end{align}
which proves (\ref{eqn:noname1}). Then for any $i,j\in[n]\backslash\wt{S}_{k-1}$ such that $\underline{u^{(k)}}\leq  r_i^*< r_j^*$, we have
\begin{align*}
\overline{t_j^{(k)}} - \overline{t_i^{(k)}} & =  \abs{\left\{l\in[n]\backslash\wt{S}_{k-1}:\theta^*_{r^*_l} \geq \theta^*_{r^*_j} + 2M -\delta_1\right\}} -  \abs{\left\{l\in[n]\backslash\wt{S}_{k-1}:\theta^*_{r^*_l} \geq \theta^*_{r^*_i} + 2M -\delta_1\right\}} \\
& =  \abs{\left\{l\in[n]\backslash\wt{S}_{k-1}: \theta^*_{r^*_i} + 2M -\delta_1 >\theta^*_{r^*_l} \geq \theta^*_{r^*_j} + 2M -\delta_1\right\}} \\
& \geq \abs{\left\{r_l^*\geq \overline{u^{(k-1)}}: \theta^*_{r^*_i} + 2M -\delta_1 >\theta^*_{r^*_l} \geq \theta^*_{r^*_j} + 2M -\delta_1\right\}}\\
&\geq \abs{\left\{l\in[n]: \theta^*_{r^*_i} + 2M -\delta_1 >\theta^*_{r^*_l} \geq \theta^*_{r^*_j} + 2M -\delta_1\right\}}\\
& \geq \frac{\theta^*_{r^*_i} -\theta^*_{r^*_j}  }{C_0 \beta}\\
&\geq \frac{r^*_j - r^*_i}{C_0},
\end{align*}
where in the first inequality we use (\ref{eqn:tilde_S_sandwitch}) and in the second  inequality we use  (\ref{eqn:noname1}).  The last two inequalities are due to $\theta^*\in \Theta_n(\beta,C_0)$. As a result,
\begin{align}\label{eqn:noname5}
\overline{u^{(k)}}-\underline{u^{(k)}}  \leq  \frac{\frac{M}{\left(1-\frac{0.12}{C_0^2}\right)\beta} - \frac{M}{\left(1+\frac{0.11}{C_0^2}\right)\beta}}{C_0} \leq \frac{0.29M}{C_0\beta}.
\end{align}

~\\
\emph{(Establishment of (\ref{eq:S-k-card-upper})).}
We first have
$$\abs{\overline{S_k}}\leq\overline{u^{(k)}}-\underline{u^{(k-1)}}\leq\overline{u^{(k)}}-\left(\overline{u^{(k-1)}}-\frac{0.29M}{C_0\beta}\right)$$
due to induction hypothesis on (\ref{eq:S-upper-lower-diff-k}) for $k-1$ and $\left\{i\in[n]: r_i^*\leq\underline{u^{(k-1)}}\right\}\subset\wt{S}_{k-1}$. By the definition of $\overline{u^{(k)}}$, similar to the proof of (\ref{eqn:noname2}), we can show
\begin{align*}
\theta^*_{\overline{u^{(k-1)}}} \leq \theta_{\overline{u^{(k)}}}^* + 2M -\delta_1 +  \frac{M}{\left(1-\frac{0.12}{C_0^2}\right)}
\end{align*}
which implies
$$\overline{u^{(k)}}-\overline{u^{(k-1)}}\leq\frac{2M}{\beta}+\frac{M}{\left(1-\frac{0.12}{C_0^2}\right)\beta}.$$
Therefore,
$$\abs{\overline{S_k}}\leq\left(2+\frac{0.29}{C_0}+\frac{1}{1-\frac{0.12}{C_0^2}}\right)\frac{M}{\beta}.$$

~\\
\emph{(Establishment of (\ref{eq:S-S-prime-equal-k})).}
Define
$$\mathcal{F}^{(k)\prime}=\left\{\min_{i\in[n]:r_i^*>\overline{u^{(k)}}}\sum_{j\in\underline{S_k}}A_{ij}\indc{j\in\underline{\mathcal{E}_{1,i}}}>h\right\}.$$
We are going to show the event $\mathcal{F}^{(k)\prime}$ is a sufficient condition for (\ref{eq:S-S-prime-equal-k}). By definition, since $\underline{S_k} \subset [n]\backslash \wt{S}_{k-1}$, we have  $w_i^{(k)\prime}\leq w_i^{(k)}$ which implies $S_k\subset S_k^\prime$. We only need to show $S_k^\prime\subset S_k$. Note that for any $i$ such that $r_i^*>\overline{u^{(k)}}$, we have
\begin{align*}
w_i^{(k)\prime} = \sum_{j\in\underline{S_k}}A_{ij}\indc{j\in\mathcal{E}_{1,i}} \geq \sum_{j\in\underline{S_k}}A_{ij}\indc{j\in\underline{\mathcal{E}_{1,i}}}>h,
\end{align*}
which means $i\notin S_k^\prime$ as $\mathcal{F}^{(k)\prime}$ is assumed to be true. Hence to show $S_k^\prime\subset S_k$, we only need to show $S_k^\prime \cap \{i\in[n]: r_i^*\leq \overline{u^{(k)}}\}\subset S_k$. 
Note that due to (\ref{eq:S-upper-lower-diff-k}), for any $i,j\in[n]$, such that $r_i^*\leq \overline{u^{(k)}}$ and $r_j^*> \underline{u^{(k)}}$, we have $r_j^*>\underline{u^{(k)}}$, $\theta_{r_i^*}^*-\theta_{r_j^*}^*\geq \theta_{\overline{u^{(k)}}}^*-\theta_{\underline{u^{(k)}}}^*\geq -0.29M$. Then for any $i$ such that $r_i^*\leq \overline{u^{(k)}}$, we have
\begin{align*}
w_i^{(k)\prime} - w_i^{(k)} &=  \sum_{j\in\underline{S_k}}A_{ij}\indc{j\in\mathcal{E}_{1,i}} -  \sum_{j\in[n]\backslash \wt{S}_{k-1}}A_{ij}\indc{j\in\mathcal{E}_{1,i}}  \\
&\geq - \sum_{j\in[n]:r_j^*> \underline{u^{(k)}}}\indc{j\in\mathcal{E}_{1,i}}\\
& \geq  - \sum_{j\in[n]:r_j^*> \underline{u^{(k)}}}\indc{j\in\overline{\mathcal{E}_{1,i}}}\\
& =  - \sum_{j\in[n]:r_j^*> \underline{u^{(k)}}} \indc{\theta_{r_j^*}^*\geq\theta_{r_i^*}^*+2M-\delta_1} \\
& = 0,
\end{align*}
where  first inequality is due to (\ref{eq:S-u-relation-k}). Hence we have $S_k^\prime \cap \{i\in[n]: r_i^*\leq \overline{u^{(k)}}\}\subset S_k$ which leads to $S_k = S_k^\prime$. As a result, to establish (\ref{eq:S-S-prime-equal-k}), we only need to analyze $\pbr{\mathcal{F}^{(k)\prime}}$.

The analysis of  $\pbr{\mathcal{F}^{(k)\prime}}$ is similar to that of $\pbr{\mathcal{F}^{(k)}}$ in the establishment of  (\ref{eq:u-contain-S-k}). By a similar independence argument, we have 
\begin{align*}
\br{\sum_{j\in\underline{S_k}}A_{ij}\indc{j\in\underline{\mathcal{E}_{1,i}}}}\Bigg| \wt{S}_{k-1} \sim\text{Binomial}\left(\abs{\underline{S_k}\cap\underline{\mathcal{E}_{1,i}}},p\right)
\end{align*}
for any $i\in[n]$ such that $r_i^*>\overline{u^{(k)}}$. From (\ref{eqn:noname3}), we have
\begin{align}
\underline{u^{(k)}} - \overline{u^{(k-1)}} \geq  \frac{2M - \delta_1}{C_0 \beta}.\label{eqn:noname4}
\end{align}
Together with (\ref{eq:S-u-relation-k}) and (\ref{eqn:tilde_S_sandwitch}), we have 
\begin{align*}
\abs{\underline{S_k}\cap\underline{\mathcal{E}_{1,i}}} &\geq \abs{\cbr{j\in[n]: \overline{u^{(k-1)}} \leq r_j^*  \leq \underline{u^{(k)}} , \theta_{r_j^*}^*\geq\theta_{r_i^*}^*+2M+\delta_1}}  \geq \underline{u^{(k)}} - \overline{u^{(k-1)}} \geq  \frac{2M - \delta_1}{C_0 \beta}.
\end{align*}
Recall that $h=pM/\beta$ and $p/(\beta \log n)\rightarrow\infty$. By Bernstein inequality, we have
\begin{align*}
\pbr{\mathcal{F}^{(k)\prime} | \wt{S}_{k-1}} = \pbr{\min_{i\in[n]:r^*_i \geq \overline{u^{(k)}}}\br{\sum_{j\in\underline{S_k}}A_{ij}\indc{j\in\underline{\mathcal{E}_{1,i}}}} > h\Bigg| \wt{S}_{k-1}} \geq 1-O(n^{-10}).
\end{align*}
Since this holds for all $\wt{S}_{k-1}$, we have $\pbr{\mathcal{F}^{(k)\prime}} \geq 1-O(n^{-10})$.

~\\
\emph{(Establishment of (\ref{eq:u-contain-S-k}) - (\ref{eq:S-S-prime-equal-k}) for $K$).}
We have (\ref{eq:u-contain-S-k}) - (\ref{eq:S-S-prime-equal-k}) hold for each $k\in[K-1]$ with probability at least $1-O(n^{-9})$. For the last partition, $S_K=[n]\backslash\wt{S}_{K-1}$. Let $S_{K,1}$ be the set obtained by Algorithm \ref{alg:partition} before the terminating condition $[n] - \abs{S_1}+...+\abs{S_{K,1}}\leq\abs{S_{K,1}}/2$ is met. $\underline{S_{K,1}}, \overline{S_{K,1}}$ can be similarly defined and (\ref{eq:u-contain-S-k}) - (\ref{eq:S-S-prime-equal-k}) should also be satisfied by $S_{K,1}$. Therefore, 
$$\abs{S_K}\leq\frac{3\abs{S_{K, 1}}}{2}\leq\frac{3}{2}\abs{\overline{S_{K, 1}}}\leq\frac{3}{2}\left(2+\frac{0.29}{C_0}+\frac{1}{1-\frac{0.12}{C_0^2}}\right)\frac{M}{\beta},$$
$$\abs{S_K}\geq\abs{S_{K, 1}}\geq\abs{\underline{S_{K,1}}}\geq\left(\frac{1.7}{C_0}+\frac{1}{1+\frac{0.11}{C_0^2}}\right)\frac{M}{\beta}$$
and
$$\left\{i\in[n]: r_i^*>\overline{u^{(K-1)}}\right\}\subset S_K\subset\left\{i\in[n]: r_i^*>\underline{u^{(K-1)}}\right\}.$$

~\\
\indent So far, we have establish (\ref{eq:u-contain-S-k}) - (\ref{eq:S-S-prime-equal-k}) for any $k\in[K]$.
Now we are ready to use them to prove the conclusions in Theorem \ref{thm:alg-1}.
\begin{enumerate}
\item Conclusion 1  is a consequence of (\ref{eq:u-contain-S-k}) and (\ref{eq:S-k-card-upper}).

\item For Conclusion 2, by (\ref{eqn:noname4}) we have $\overline{u^{(k-2)}} < \underline{u_{(k-1)}} < \overline{u^{(k-1)}} < \underline{u_{(k)}} < \overline{u^{(k)}} < \underline{u_{(k+1)}} $. Together with (\ref{eq:u-contain-S-k}) and (\ref{eqn:tilde_S_sandwitch}), we have 
$$\left\{i\in[n]: \overline{u^{(k-2)}}<r_i^*\leq\underline{u^{(k+1)}}\right\}\subset S_{k-1}\cup S_k\cup S_{k + 1}\subset\left\{i\in[n]: \underline{u^{(k-2)}}<r_i^*\leq\overline{u^{(k+1)}}\right\}.$$
Therefore, using (\ref{eqn:noname4}), for any $i$ such that $\underline{u^{(k-1)}}<r_i^*\leq\overline{u^{(k)}}$, 
\begin{align*}
&\left\{j\in[n]: \abs{r_i^*-r_j^*}\leq\frac{1.51M}{C_0\beta}\right\}\\
&\subset\left\{j\in[n]: \underline{u^{(k-1)}}-\frac{1.51M}{C_0\beta}\leq r_j^*\leq\overline{u^{(k)}}+\frac{1.51M}{C_0\beta}\right\}\\
&\subset\left\{j\in[n]: \overline{u^{(k-2)}}<r_j^*\leq\underline{u^{(k+1)}}\right\}\subset S_{k-1}\cup S_k\cup S_{k + 1}.
\end{align*}
For $k=1$ or $K$, only oneside needs to be considered and the property still holds due to the gap between $\underline{u^{(2)}}$ and $\overline{u^{(1)}}$ as well as the gap between $\underline{u^{(K-1)}}$ and $\overline{u^{(K-2)}}$.

\item For Conclusion 3, by (\ref{eq:u-contain-S-k}) and (\ref{eqn:tilde_S_sandwitch}), we have 
\begin{align}
\left\{i\in[n]: \overline{u^{(k-1)}}<r_i^*\leq\underline{u^{(k)}}\right\}\subset S_k\subset\left\{i\in[n]: \underline{u^{(k-1)}}<r_i^*\leq\overline{u^{(k)}}\right\}.\label{eqn:noname6}
\end{align}
Using (\ref{eqn:noname4}), we have 
$$\max\left\{r_i^*: i\in S_k\right\}\leq\overline{u^{(k)}}<\underline{u^{(k+1)}}<\min\left\{r_i^*: i\in S_{k+2}\right\}.$$
Same results can be established for $\max\left\{r_i^*: i\in S_k\right\}<\min\left\{r_i^*: i\in S_{l}\right\}$ for any $l>k+2$.

\item For Conclusion 4, for any $k$ and any $i$, the definition of $w_i^{(k)\prime}$ only involves $j$ such that $j\in\overline{\mathcal{E}_{1,i}}$. This implies that the definition of $S_k^\prime$ only involves information of $(A_{ij}, \bar{y}_{ij}^{(1)})$ such that $\theta_{r_j^*}^*-\theta_{r_i^*}^*\geq2M-\delta_1$. Thus $S_k^\prime$ can be used as the $\check{S}_k$ in Theorem \ref{thm:alg-1}.

\item For Conclusion 5,  note that  for any $k\in[K]$ and $i\in S_k$, we have
\begin{align*}
\abs{\left\{j\in[n]: |\theta^*_{r_i^*}-\theta^*_{r_j^*}|\leq \frac{M}{2}\right\} \cap S_k} &\geq  \abs{\left\{j\in[n]: |r_i^*-r_j^*|\leq \frac{M}{2C_0\beta}\right\} \cap S_k} \\
& \geq \abs{\left\{j\in[n]: |r_i^*-r_j^*|\leq \frac{M}{2C_0\beta} ,  \overline{u^{(k-1)}}<r_j^*\leq\underline{u^{(k)}}\right\}}.
\end{align*}
where the last inequality is by (\ref{eqn:noname6}). Again by (\ref{eqn:noname6}), we have $\underline{u^{(k-1)}}<r_i^*\leq\overline{u^{(k)}}$.
From   (\ref{eqn:noname4}) we know $\underline{u^{(k)}} - \overline{u^{(k-1)}} > M/(2C_0\beta)$.  Then we have
\begin{align*}
\abs{\left\{j\in[n]: |\theta^*_{r_i^*}-\theta^*_{r_j^*}|\leq \frac{M}{2}\right\} \cap S_k}  &\geq \frac{M}{2C_0\beta} - \max\cbr{\overline{u^{(k-1)}} - \underline{u^{(k-1)}}, \overline{u^{(k)}} - \underline{u^{(k)}}}\\
&\geq  \frac{0.21M}{C_0\beta}
\end{align*}
where the last inequality is  by (\ref{eq:S-upper-lower-diff-k}).
\end{enumerate}
The proof is complete.
\end{proof}

\subsection{Proofs of Lemma \ref{lem:anderson-ineq}, Lemma \ref{lem:MLE-check} and Lemma \ref{lem:prev-paper}}\label{sec:pf-lemmas}

We first prove Lemma \ref{lem:anderson-ineq} below.
\begin{proof}[Proof of Lemma \ref{lem:anderson-ineq}]
Recall that $\wh{r}$ is obtained by sorting $\left\{\sum_{j\in[n]\backslash\{i\}}R_{ij}\right\}_{i\in[n]}$. Define 
$$\wh{s}_i=\sum_{j\in[n]\backslash\{i\}}R_{ij},$$
$$\wh{R}_{ij}=\indc{\wh{s}_i>\wh{s}_j}=\indc{\wh{r}_i<\wh{r}_j}$$
and
$$s_i^*=\sum_{j\in[n]\backslash\{i\}}R_{ij}^*.$$
Observe that
$$r_i^*=n-s_i^*,$$
we have
$$\wh{R}_{ij}=\indc{\wh{s}_i>\wh{s}_j}=\indc{\wh{s}_i-s_i^*+s_i^*>\wh{s}_j-s_j^*+s_j^*}=\indc{\wh{s}_i-s_i^*-(\wh{s}_j-s_j^*)>r_i^*-r_j^*}.$$
Thus
\begin{align*}
&\k(\wh{r}, r^*)=\frac{1}{n}\sum_{1\leq i<j\leq n}\indc{\wh{R}_{ij}\neq R_{ij}^*}\\
&=\frac{1}{n}\sum_{1\leq i<j\leq n}\abs{\indc{\wh{s}_i-s_i^*-(\wh{s}_j-s_j^*)>r_i^*-r_j^*}-\indc{0>r_i^*-r_j^*}}\\
&\leq\frac{1}{n}\sum_{1\leq i<j\leq n}\indc{\abs{\wh{s}_i-s_i^*-(\wh{s}_j-s_j^*)}\geq\abs{r_i^*-r_j^*}}\\
&\leq\frac{1}{n}\sum_{1\leq i<j\leq n}\indc{\abs{\abs{r_i^*-r_j^*}\leq\abs{\wh{s}_i-s_i^*}+\abs{\wh{s}_j-s_j^*}}}\\
&=\frac{1}{n}\sum_{k=1}^{n-1}\sum_{\substack{1\leq i<j\leq n\\\abs{r_i^*-r_j^*}=k}}\indc{k\leq\abs{\wh{s}_i-s_i^*}+\abs{\wh{s}_j-s_j^*}}\\
&\leq\frac{1}{n}\sum_{k=1}^{n-1}\sum_{\substack{1\leq i<j\leq n\\\abs{r_i^*-r_j^*}=k}}\indc{\frac{k}{2}\leq\abs{\wh{s}_i-s_i^*}}+\indc{\frac{k}{2}\leq\abs{\wh{s}_j-s_j^*}}\\
&\leq\frac{2}{n}\sum_{i=1}^{n}\sum_{k=1}^{n-1}\indc{\frac{k}{2}\leq\abs{\wh{s}_i-s_i^*}}\leq\frac{4}{n}\sum_{i=1}^{n}\sum_{k=1}^n\indc{k\leq\abs{\wh{s}_i-s_i^*}}\\
&=\frac{4}{n}\sum_{i=1}^n\abs{\wh{s}_i-s_i^*}\leq\frac{4}{n}\sum_{i=1}^n\sum_{j\in[n]\backslash\{i\}}\abs{R_{ij}-R_{ij}^*}=\frac{4}{n}\sum_{1\leq i\neq j\leq n}\indc{R_{ij}\neq R_{ij}^*}
\end{align*}
which completes the proof.
\end{proof}

Next, we prove Lemma \ref{lem:MLE-check}.
\begin{proof}[Proof of Lemma \ref{lem:MLE-check}]
Recall $\mathcal{E}=\left\{(i,j): 1\leq i<j\leq n, \psi(-M)\leq\overline{y}_{ij}^{(1)}\leq\psi(M)\right\}$. Then on the event where Lemma \ref{cor:ave-con-cor} holds, $\mathcal{E}$ can be written as 
$$\mathcal{E}=\left\{(i,j):1\leq i<j\leq n, \abs{\theta_{r_i^*}-\theta_{r_j^*}}\leq M/2\right\}\uplus\left(\mathcal{E}\cap\left\{(i,j):M/2<\abs{\theta_{r_i^*}-\theta_{r_j^*}}< 1.1M\right\}\right)$$
which implies $\check{A}_{ij}=A_{ij}\indc{(i,j)\in\mathcal{E}}$. Moreover, on the event where Theorem \ref{thm:alg-1} holds, $\check{S}_k=S_k, k\in[K]$ by Conclusion 4. This proves $\ell^{(k)}(\theta)=\check{\ell}^{(k)}(\theta)$ with probability at least $1- O(n^{-8})$. $\check{\theta}^{(k)}$ and $\wh{\theta}^{(k)}$ are equivalent up to a common shift since the Hessian in local MLE is well conditioned with probability at least $1- O(n^{-8})$ due to Lemma \ref{lem:bound-hessian}. 
\end{proof}

Finally, we need to prove Lemma \ref{lem:prev-paper}, which requires us to first establish a few extra lemmas.

\begin{lemma}\label{lem:band_laplacian_eigenvalue}
For any integer constant $C\geq1$, define a matrix $M\in\cbr{0,1}^{n\times n}$ such that $M_{ij}=\indc{\abs{i-j}\leq n/C}$. Let $\mathcal{L}_M$ be its Laplacian matrix such that
\begin{align*}
\sbr{\mathcal{L}_M}_{ij} =\begin{cases}
-M_{ij},\quad \text{if }i\neq j\\
\sum_{l}M_{il},\quad \text{if }i= j.
\end{cases}
\end{align*}
Denote $\lambda_{\min,\perp}(\mathcal{L}_M)$ to be the second smallest eigenvalue of $\mathcal{L}_M$, i.e., $\lambda_{\min,\perp}(\mathcal{L}_M)=\min_{u\neq 0, \mathds{1}_n^Tu=0}\frac{u^T\mathcal{L}_M u}{\norm{u}}$. Then there exists another constant $C'>0$ that only depends on $C$ such that
\begin{align*}
\frac{1}{n}\lambda_{\min,\perp}(\mathcal{L}_M) =\inf_{\substack{x\in\mathbb{R}^n\\\mathds{1}_n^Tx=0\\\norm{x}=1}}\frac{\sum_{\abs{i-j}\leq\frac{n}{C}}(x_i-x_j)^2}{n} \geq  C'.
\end{align*}
\end{lemma}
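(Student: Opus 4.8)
\medskip
\noindent\textbf{Proof idea.}
Write $w:=\lfloor n/C\rfloor$, let $G_w$ be the band graph on $[n]$ with edge set $\{\{i,j\}:|i-j|\le w\}$, and note that $\mathcal{L}_M$ is exactly the graph Laplacian of $G_w$, so $\mathcal{L}_M\mathds{1}_n=0$ and $x^\top\mathcal{L}_M x=\sum_{|i-j|\le n/C}(x_i-x_j)^2=:D(x)$. Since $G_w$ is connected once $w\ge1$, we have $\ker\mathcal{L}_M=\Span(\mathds{1}_n)$, and the first displayed equality in the statement is just the Courant--Fischer characterization of $\lambda_{\min,\perp}(\mathcal{L}_M)/n$. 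Hence everything reduces to producing a constant $C'=C'(C)>0$ with $D(x)\ge C'n$ for all $x$ satisfying $\mathds{1}_n^\top x=0$ and $\norm{x}=1$.

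The plan is a block decomposition combined with a one-dimensional Poincar\'e inequality. For $n$ large relative to $C$ I would set $L:=3C$ and partition $[n]$ into $L$ consecutive blocks $B_1,\dots,B_L$ of almost equal size, chosen so that $|B_\ell|\asymp n/C$ for all $\ell$ and (this is why $L$ must be a few times $C$, not exactly $C$) $|B_\ell|+|B_{\ell+1}|\le w$. Then, for each $\ell=1,\dots,L-1$, the set $K_\ell:=B_\ell\cup B_{\ell+1}$ is a run of at most $w$ consecutive indices and hence induces a clique in $G_w$, while any single edge of $G_w$ lies in at most two of the $K_\ell$'s. Applying the clique identity $\sum_{\{i,j\}\subseteq K}(x_i-x_j)^2=|K|\sum_{i\in K}(x_i-\bar x_K)^2$ (with $\bar x_K$ the mean of $x$ over $K$) to each $K_\ell$ and summing gives
\[
D(x)\ \ge\ \tfrac12\sum_{\ell=1}^{L-1}|K_\ell|\sum_{i\in K_\ell}(x_i-\bar x_{K_\ell})^2\ \gtrsim\ \frac{n}{C}\sum_{\ell=1}^{L-1}\sum_{i\in K_\ell}(x_i-\bar x_{K_\ell})^2 .
\]

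Next I would expand each inner sum by the elementary within/between variance decomposition
\[
\sum_{i\in K_\ell}(x_i-\bar x_{K_\ell})^2\ =\ V_\ell+V_{\ell+1}+\frac{|B_\ell|\,|B_{\ell+1}|}{|K_\ell|}\,(\bar x_{B_\ell}-\bar x_{B_{\ell+1}})^2,\qquad V_\ell:=\sum_{i\in B_\ell}(x_i-\bar x_{B_\ell})^2,
\]
and sum over $\ell$, using $\sum_{\ell=1}^{L-1}(V_\ell+V_{\ell+1})\ge\sum_{\ell=1}^{L}V_\ell$ and $|B_\ell|\,|B_{\ell+1}|/|K_\ell|\gtrsim n/C$. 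Up to a constant depending only on $C$, this lower-bounds $D(x)$ by $\frac nC\sum_{\ell=1}^L V_\ell+\bigl(\frac nC\bigr)^2\sum_{\ell=1}^{L-1}(\bar x_{B_\ell}-\bar x_{B_{\ell+1}})^2$. The key point is that the block-mean vector $(\bar x_{B_1},\dots,\bar x_{B_L})\in\mathbb{R}^L$ is essentially mean-zero: $\sum_\ell|B_\ell|\bar x_{B_\ell}=\sum_i x_i=0$ and the block sizes differ by at most $1$. Therefore the Poincar\'e inequality for the path graph on $L$ vertices --- whose spectral gap $2(1-\cos(\pi/L))$ depends only on $L=3C$ --- gives $\sum_{\ell=1}^{L-1}(\bar x_{B_\ell}-\bar x_{B_{\ell+1}})^2\gtrsim\sum_\ell\bar x_{B_\ell}^2$. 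Combining this with the exact identity $\norm{x}^2=\sum_\ell V_\ell+\sum_\ell|B_\ell|\bar x_{B_\ell}^2\asymp\sum_\ell V_\ell+\frac nC\sum_\ell\bar x_{B_\ell}^2$ yields $D(x)\gtrsim n\norm{x}^2$, i.e.\ $D(x)\ge C'(C)n$, as required.

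The routine ingredients are the clique Laplacian spectrum, the variance decomposition, and the one-dimensional Poincar\'e inequality; the parts that need care are the bookkeeping. I have to choose $L$ and the block sizes so that every $K_\ell$ really fits inside the bandwidth $w$ while all blocks stay of order $n/C$; I have to absorb the $O(1)$ discrepancy in block sizes (which makes $(\bar x_{B_\ell})_\ell$ only \emph{approximately} mean-zero, costing an $n$-independent factor in the Poincar\'e step); and I have to control the overlap multiplicity of the cliques $K_\ell$ (which is where the harmless factor $\tfrac12$ comes from). It is worth noting where the strength of the bound comes from: discarding the between-block term leaves only $D(x)\gtrsim\frac nC\sum_\ell V_\ell$, which is nowhere near $\Theta(n)\norm{x}^2$ for a globally smooth test vector such as a linear ramp $x_i\propto i-\tfrac{n+1}{2}$; it is exactly the Poincar\'e step applied to the block means that upgrades the estimate to the required order $n$. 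The finitely many remaining cases of small $n$ (where this partition does not fit) are immediate, since $G_w$ still contains the connected path $P_n$ and $\lambda_{\min,\perp}(\mathcal{L}_M)\ge2(1-\cos(\pi/n))$.
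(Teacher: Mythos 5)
Your proof is correct and finishes by a genuinely different — and more constructive — route than the paper. Both proofs open the same way: partition $[n]$ into $O(C)$ consecutive blocks of size $\asymp n/C$ and keep only the band-graph edges within and between adjacent blocks, reducing everything to block-level quantities. From there the paper re-expresses the block-restricted quadratic form in $y_k=n^{-1/2}\sum_{i\in B_k}x_i$, $z_k=\sum_{i\in B_k}x_i^2$, $p_k=|B_k|/n$, makes an unmotivated change of variables to two scalars $w_{2k-1},w_{2k}$ per block so the expression becomes a sum of squared differences of $w$'s, and then argues the infimum over a compact parameter set $T$ depending only on $C$ is strictly positive by contradiction (if it were zero all the $w$'s would coincide, forcing $y_k^2=p_kz_k$ and $y_k/p_k\equiv\pm1$, incompatible with $\sum_ky_k=0$). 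That establishes $C'(C)>0$ but never exhibits it. You instead use the clique Laplacian identity on each adjacent pair $K_\ell=B_\ell\cup B_{\ell+1}$, the within/between (ANOVA) variance decomposition, and the one-dimensional Poincar\'e inequality for the path $P_{3C}$ applied to the block means, which ties $C'$ explicitly to the path's spectral gap $2(1-\cos(\pi/(3C)))$. Your remark that discarding the between-block term fails for a linear ramp is exactly the right sanity check and shows why the Poincar\'e step is essential. The two caveats you flag — that the block-mean vector is only \emph{weighted}-mean-zero (its unweighted mean is $O(\sqrt{L}/n)$ times the $\ell_2$ norm of the block means, negligible once $n\gtrsim C$), and that each clique $K_\ell$ must fit inside the bandwidth (true once $n\gtrsim C$, with the remaining finitely many $n$ handled via the path $P_n$) — are both real and both close exactly as you indicate. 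Net effect: same conclusion as the paper but with an explicit, interpretable constant, at the cost of slightly more bookkeeping than the paper's compactness-and-contradiction argument.
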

\begin{proof}
We partition $[n]$ into $4C$ consecutive blocks such that each block contains either $\lceil n/4C\rceil$ or $\lfloor n/4C\rfloor$ consecutive indices. 
Let these blocks be a sequence of disjoint sets $B_1,...,B_{4C}$ such that $\max_{i\in B_k} i <\min_{j\in B_l} j$ if $k<l$. 
The idea is to lower bound the summation over the diagonal region by a sequence of square regions. 
Thus, for any $x\in\mathbb{R}^n, \mathds{1}_n^Tx=0, \norm{x}=1$, we have
\begin{align*}
\frac{1}{n}x^T\mathcal{L}_M x& = \frac{\sum_{\abs{i-j}\leq\frac{n}{C}}(x_i-x_j)^2}{n} \\
&\geq\frac{1}{n}\left[\sum_{k,l\in[4C]:\abs{k-l}\leq 1}\sum_{i\in B_{k}, j\in B_{l}}(x_i-x_j)^2\right]\\
& = \sum_{k,l\in[4C]:\abs{k-l}\leq 1} \br{\frac{\abs{B_l}}{n} \sum_{i\in B_k} x_i^2 + \frac{\abs{B_k}}{n} \sum_{i\in B_l} x_i^2 - 2\br{\sum_{i\in B_k} \frac{x_i}{\sqrt{n}}}\br{\sum_{i\in B_l} \frac{x_i}{\sqrt{n}}} }\\
& =  \sum_{k,l\in[4C]:\abs{k-l}\leq 1} \br{p_l z_k+ p_k z_l -2y_ky_l},
\end{align*}
where we denote
$$y_k=\sum_{i\in B_k}\frac{x_i}{\sqrt{n}}, z_k=\sum_{i\in B_k}x_i^2, p_k=\frac{\abs{B_k}}{n}.$$

For any $k\in[4C]$, we define
\begin{align}\label{eqn:w_def}
w_{2k-1} = \frac{y_k +\sqrt{p_k z_k - y_k^2}}{2p_k},\text{ and }w_{2k} = \frac{y_k -\sqrt{p_k z_k - y_k^2}}{2p_k}.
\end{align}
Note that for any $k,l\in[4C]$, we have
\begin{align*}
&p_l p_k \br{\br{w_{2k-1} - w_{2l-1}}^2 + \br{w_{2k-1} - w_{2l}}^2 + \br{w_{2k} - w_{2l-1}}^2 + \br{w_{2k} - w_{2l}}^2} \\
&= p_l p_k \br{2\br{w_{2k-1}^2 + w_{2k}^2 + w_{2l-1}^2 + w_{2k}^2} - 2\br{w_{2k-1} + w_{2k}}\br{w_{2l-1} + w_{2l}} } \\
& =  p_l p_k  \br{\frac{z_k}{p_k} + \frac{z_l}{p_l} -2 \frac{y_ky_l}{p_kp_l}}\\
&= p_l z_k + p_k z_l -2y_ky_l.
\end{align*}
Then we have
\begin{align*}
 &\sum_{k,l\in[4C]:\abs{k-l}\leq 1} \br{p_l z_k+ p_k z_l -2y_ky_l}\\
  &=   \sum_{k,l\in[4C]:\abs{k-l}\leq 1}   p_l p_k \br{\br{w_{2k-1} - w_{2l-1}}^2 + \br{w_{2k-1} - w_{2l}}^2 + \br{w_{2k} - w_{2l-1}}^2 + \br{w_{2k} - w_{2l}}^2}.
\end{align*}
Note that $w$ is a function of $y,z,p$ which by definition satisfy: $\sum_{k=1}^{4C} y_k=0$, $\sum_{k=1}^{4C} z_k=1$, $\min_{k\in[4C]} p_k \geq 1/(5C)$, $\sum_{k=1}^{4C} p_k=1$, and $y_k^2\leq p_k z_k$ for all $k\in[4C]$. Define a parameter space $T$:
\begin{align*}
T = \cbr{(y,z,p):\sum_{k=1}^{4C} y_k=0,\sum_{k=1}^{4C} z_k=1,\min_{k\in[4C]} p_k \geq 1/(5C),\sum_{k=1}^{4C} p_k=1, \text{ and }y_k^2\leq p_k z_k,\forall k\in[4C]}.
\end{align*}
Then we have
\begin{align}
\nonumber&\frac{1}{n}\lambda_{\min,\perp}(\mathcal{L}_M) \\
&\geq \inf_{(y,z,p)\in T}  \sum_{k,l\in[4C]:\abs{k-l}\leq 1}   p_l p_k \br{\br{w_{2k-1} - w_{2l-1}}^2 + \br{w_{2k-1} - w_{2l}}^2 + \br{w_{2k} - w_{2l-1}}^2 + \br{w_{2k} - w_{2l}}^2},\label{eqn:w_lower}
\end{align}
where $w$ is defined in (\ref{eqn:w_def}).

Since $T$ only depends on $C$, the quantity (\ref{eqn:w_lower}) also only depends on $C$. Then, (\ref{eqn:w_lower}) is equal to some constant $C'\geq 0$ only depending on $C$. We are going to show $C'>0$. Otherwise, let the infimum of (\ref{eqn:w_lower}) be achieved at some $w$ with $(y,z,p)\in T$. Then, we must have $w_{2k-1} = w_{2l-1} = w_{2k} = w_{2l}$ for any $k,l\in[4C]$ such that $\abs{k-l}\leq 1$. This has two immediately implications. First, 
 for any $k\in[4C]$, since $w_{2k-1}=w_{2k}$, we have $y_k^2 =p_k z_k$ and $w_k = y_k/(2p_k)$,  Second, since $w_{2k} = w_{2(k+1)}$ for any $k\in[4C-1]$, there exists some $c$ such that $y_k/p_k = c$ for all $k\in[4C]$. Together with $y_k^2 =p_k z_k$, we obtain $c^2 p_k = z_k$ for all $k\in[C]$. Since $\sum_{k=1}^{4C} z_k=1$ and $\sum_{k=1}^{4C} p_k=1$, we conclude $c=\pm 1$. Then using $y_k/p_k = c$, we have $\sum_{k=1}^{4C} y_k = c \sum_{k=1}^{4C} p_k = c \neq 0$, which is a contradiction with $\sum_{k=1}^{4C} y_k=0$.  As a result, we obtain $\frac{1}{n}\lambda_{\min,\perp}(\mathcal{L}_M) \geq C' >0$.
 \end{proof}

\begin{lemma}\label{lem:bound-hessian}
Under the assumptions in Lemma \ref{lem:prev-paper}, 
$$\lambda_{\min,\perp}(H(\eta^*))=\min_{u\neq 0: \mathds{1}_{m}^Tu=0}\frac{u^TH(\eta^*)u}{\|u\|^2} \gtrsim mp$$
with probability at least $1-O(n^{-10})$, where $H(\eta^*)$ is the Hessian matrix of the objective (\ref{eq:MLE-small}), defined by
$$H_{ij}(\eta^*)=\begin{cases}
\sum_{l\in[m]\backslash\{i\}}B_{il}\psi'(\eta_i^*-\eta_l^*), & i=j, \\
-B_{ij}\psi'(\eta_i^*-\eta_j^*), & i\neq j.
\end{cases}$$
\end{lemma}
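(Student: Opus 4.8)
\textbf{Proof proposal for Lemma \ref{lem:bound-hessian}.} The plan is to view $H(\eta^*)$ as a weighted graph Laplacian, lower bound it by a constant multiple of the unweighted Laplacian of the random band graph on the edges $|i-j|\le cm$ (on which the weights $\psi'(\eta_i^*-\eta_j^*)$ are bounded below by a positive constant), and then combine the deterministic band spectral gap from Lemma \ref{lem:band_laplacian_eigenvalue} with operator-norm concentration of the random Laplacian.

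\emph{Reduction to the band.} Since $\eta_1^*-\eta_m^*\le\kappa=O(1)$, we have $|\eta_i^*-\eta_j^*|\le\kappa$ for all $i\ne j$, hence $\psi'(\eta_i^*-\eta_j^*)\ge\psi'(\kappa)=:c_\psi>0$ because $\psi'$ is even, positive, and decreasing in $|\cdot|$. Let $c\in(0,1)$ be the constant from Lemma \ref{lem:prev-paper} (so $p_{ij}=p$ whenever $|i-j|\le cm$), let $M_b\in\{0,1\}^{m\times m}$ be the band indicator $\,(M_b)_{ij}=\indc{|i-j|\le cm}$, and let $A^{(b)}$ be the symmetric random matrix $A^{(b)}_{ij}=B_{ij}(M_b)_{ij}$, so $\E A^{(b)}=pM_b$. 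Writing $H(\eta^*)=\sum_{i<j}B_{ij}\psi'(\eta_i^*-\eta_j^*)(e_i-e_j)(e_i-e_j)^T$ as a sum of non-negative rank-one terms, dropping the edges outside the band and using $\psi'(\eta_i^*-\eta_j^*)\ge c_\psi$ there gives, for every $u\in\mathbb{R}^m$,
$$u^TH(\eta^*)u\ \ge\ c_\psi\sum_{|i-j|\le cm}B_{ij}(u_i-u_j)^2\ =\ c_\psi\,u^T\mathcal{L}_{A^{(b)}}u .$$
Thus it suffices to show $\lambda_{\min,\perp}(\mathcal{L}_{A^{(b)}})\gtrsim mp$ with probability $1-O(n^{-10})$.

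\emph{Deterministic gap plus concentration.} Choose an integer constant $C\ge\lceil 1/c\rceil$ large enough that $\{|i-j|\le m/C\}\subseteq\{|i-j|\le cm\}$ for all large $m$; then $\mathcal{L}_{M_b}\succeq\mathcal{L}_M$ for the matrix $M$ of Lemma \ref{lem:band_laplacian_eigenvalue}, so that lemma yields $\lambda_{\min,\perp}(\mathcal{L}_{M_b})\ge\lambda_{\min,\perp}(\mathcal{L}_M)\ge C'm$ for a constant $C'>0$ depending only on $c$, whence $\lambda_{\min,\perp}(\E\mathcal{L}_{A^{(b)}})=p\,\lambda_{\min,\perp}(\mathcal{L}_{M_b})\ge C'mp$. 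For any $u\perp\mathds{1}_m$,
$$u^T\mathcal{L}_{A^{(b)}}u\ \ge\ C'mp\,\|u\|^2-\opnorm{\mathcal{L}_{A^{(b)}}-\E\mathcal{L}_{A^{(b)}}}\,\|u\|^2,$$
and $\mathcal{L}_{A^{(b)}}-\E\mathcal{L}_{A^{(b)}}=(D_{A^{(b)}}-\E D_{A^{(b)}})-(A^{(b)}-\E A^{(b)})$. The diagonal term is handled by Bernstein's inequality and a union bound over the $m$ degrees: since $mp\gg\log(m+n)\gtrsim\log n$, the linear term in Bernstein is dominated and $\opnorm{D_{A^{(b)}}-\E D_{A^{(b)}}}\lesssim\sqrt{mp\log n}$ with probability $1-O(n^{-10})$. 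The off-diagonal term has independent, mean-zero, $[-1,1]$-valued entries with variances at most $p$ and expected maximal degree $\asymp mp\gg\log n$, so by the spectral concentration used in Lemma \ref{lem:A-accurate} (the argument of Theorem 5.2 of \cite{lei2015consistency}, which applies to any independent-entry model once the maximum expected degree exceeds a large multiple of $\log n$), $\opnorm{A^{(b)}-\E A^{(b)}}\lesssim\sqrt{mp}$ with probability $1-O(n^{-10})$. Hence $\opnorm{\mathcal{L}_{A^{(b)}}-\E\mathcal{L}_{A^{(b)}}}\lesssim\sqrt{mp\log n}=o(mp)$ on this event, so $\lambda_{\min,\perp}(\mathcal{L}_{A^{(b)}})\ge\frac{C'}{2}mp$ for $n$ large, and combining with the reduction step gives $\lambda_{\min,\perp}(H(\eta^*))\ge\frac{c_\psi C'}{2}mp\gtrsim mp$.

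\emph{Main obstacle.} The claim is a lower bound uniform over the continuum of directions $u\perp\mathds{1}_m$, so the crux is upgrading the pointwise expectation bound to an operator-norm statement; this is exactly where Lemma \ref{lem:band_laplacian_eigenvalue} is essential, since it supplies the $\Theta(m)$ spectral gap of the deterministic band Laplacian irrespective of the (non-vanishing but possibly tiny) spacing of $\eta^*$, after which the matrix concentration is routine. The only point requiring care is that the probability budget must be phrased in $\log n$ rather than $\log m$ to match the $O(n^{-10})$ in the conclusion, which is harmless because $mp\gg\log(m+n)$.
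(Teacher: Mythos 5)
Your proof is correct and rests on exactly the same two pillars as the paper's: Lemma \ref{lem:band_laplacian_eigenvalue} supplies the $\Theta(m)$ spectral gap of the deterministic band Laplacian, and operator-norm concentration controls the stochastic fluctuation at the $o(mp)$ scale. The organization differs slightly. The paper keeps the weights and writes $H(\eta^*)=\E H(\eta^*)+\bigl(H(\eta^*)-\E H(\eta^*)\bigr)$ directly, applies matrix Bernstein to the deviation (weighted, off-band edges included), and only passes to the band and invokes $\psi'\ge c_\psi$ when bounding the Rayleigh quotient of $\E H(\eta^*)$; you instead throw away the weights and the out-of-band rank-one terms up front (sound, since they are all PSD), reducing to the \emph{unweighted} band Laplacian $\mathcal{L}_{A^{(b)}}$, and only then do mean-plus-deviation. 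Your variant is a clean reorganization that makes the concentration step weight-free; either order works. One technical point worth flagging: you cite Theorem 5.2 of \cite{lei2015consistency} for $\opnorm{A^{(b)}-\E A^{(b)}}\lesssim\sqrt{mp}$ with failure probability $O(n^{-10})$, but that theorem, applied at dimension $m$, yields failure probability $m^{-r}$; when $m$ is much smaller than $n$ (which can happen here, since $m$ is the size of a local league) this does not directly give the $n^{-10}$ budget. The paper avoids this by using matrix Bernstein, which under $mp\gg\log n$ and $m\le n$ gives $\opnorm{A^{(b)}-\E A^{(b)}}\lesssim\sqrt{mp\log n}$ with failure probability $\le n^{-10}$; since $\sqrt{mp\log n}=o(mp)$ this substitution leaves the rest of your argument unchanged, and indeed your degree term was already of that order, so the weaker bound costs nothing.
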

\begin{proof}
We can decompose $H(\eta^*)$ into stochastic part $H(\eta^*)-\E(H(\eta^*))$ ans deterministic part $\E(H(\eta^*))$ and bound them separately. We first look at the stochastic part. Note that
$$H(\eta^*)-\E(H(\eta^*))=D - \E(D)-\sum_{i<j}(B_{ij}-p_{ij})\psi^\prime(\eta_i^*-\eta_j^*)(E_{ij}+E_{ji})$$
where $D=diag\{D_1,...,D_m\}=diag\{\sum_{j\neq 1}B_{ij}\psi^\prime(\eta_1^*-\eta_j^*),...,\sum_{j\neq m}B_{mj}\psi^\prime(\eta_m^*-\eta_j^*)\}$; $E_{ij}$ is an $m\times m$ matrix and has 1 on the entry $(i,j)$ and 0 otherwise. We also have $\opnorm{(B_{ij}-p_{ij})\psi^\prime(\eta_i^*-\eta_j^*)(E_{ij}+E_{ji})}\leq 1$ and $\opnorm{\sum_{i<j}(B_{ij}-p_{ij})^2\psi^\prime(\eta_i^*-\eta_j^*)^2(E_{ij}+E_{ji})^2}\leq mp$. By matrix Bernstein inequality in \cite{tropp2015introduction}, we have
$$\p\left(\opnorm{\sum_{i<j}(B_{ij}-p_{ij})(E_{ij}+E_{ji})}>t\right)\leq2m\exp\left(-\frac{t^2/2}{mp+\frac{t}{3}}\right).$$
Taking $t=C_1^\prime\sqrt{mp\log n}$ for some large enough constant $C_1^\prime>0$, we have 
$$\opnorm{\sum_{i<j}(B_{ij}-p_{ij})\psi^\prime(\eta_i^*-\eta_j^*)(E_{ij}+E_{ji})}\leq C_1^\prime\sqrt{mp\log n}$$
with probability at least $1-O(n^{-10})$. Standard concentration using Bernstein inequality also yields 
$$\opnorm{D-\E(D)}\leq C_2^\prime\sqrt{mp\log n}$$
for some constant $C_2^\prime>0$ with probability at least $1-O(n^{-10})$. Thus the stochastic part
\begin{equation}
\opnorm{H(\eta^*)-\E(H(\eta^*))}\leq(C_1^\prime+C_2^\prime)\sqrt{mp\log n}=o(mp)\label{eq:stoc-small}
\end{equation}
with probability at least $1-O(n^{-10})$. 

For the deterministic part, we first choose a constant integer $C^\prime>0$ such that for any $\abs{i-j}\leq\frac{n}{C^\prime}$, $p_{ij}=p$. Thus for any unit vector $x\in\mathbb{R}^m$ such that $\mathds{1}_m^Tx=0$, 
\begin{align}
\nonumber&\frac{x^T\E(H(\eta^*))x}{m}=\frac{\sum_{i<j}p_{ij}\psi^\prime(\eta_i^*-\eta_j^*)(x_i-x_j)^2}{m}\\
\nonumber&\geq\frac{\sum_{i<j, \abs{i-j}\leq\frac{m}{C^\prime}}p\psi^\prime(\eta_i^*-\eta_j^*)(x_i-x_j)^2}{m}\\
&\gtrsim p\frac{\sum_{i<j, \abs{i-j}\leq\frac{m}{C^\prime}}(x_i-x_j)^2}{m}\label{eq:use-bounded}\\
&\gtrsim p\label{eq:use-band-mat}
\end{align}
where (\ref{eq:use-bounded}) uses the boundedness of $\eta_1^*-\eta_m^*$; (\ref{eq:use-band-mat}) is a consequence of Lemma \ref{lem:band_laplacian_eigenvalue} and $C^\prime$ is a constant independent of $m$ and $n$. Combing (\ref{eq:stoc-small}) and (\ref{eq:use-band-mat}) concludes the proof.
\end{proof}

The proof of Lemma \ref{lem:prev-paper} is given below.
\begin{proof}[Proof of Lemma \ref{lem:prev-paper}]
Since $\frac{L(\eta_i^*-\eta_j^*)^2}{2(W_{i}(\eta^*)+W_j(\eta^*))}\asymp mpL(\eta_i^*-\eta_j^*)^2$, we only need to consider the situation where $mpL(\eta_i^*-\eta_j^*)^2$ is greater than a sufficiently large constant, since otherwise we can use the trivial bound $\mathbb{P}\left(\wh{\eta}_i<\wh{\eta}_j\right)\leq 1$. Define
$$\wt{\eta}_j=\eta_j^*-\frac{\sum_{l\in[m]\backslash\{j\}}B_{jl}(\bar{y}_{jl}-\psi(\eta_j^*-\eta_l^*))}{\sum_{l\in[m]\backslash\{j\}}B_{jl}\psi'(\eta_j^*-\eta_l^*)}.$$
Following the same argument used in the proof of Theorem 3.2 of \cite{chen2020partial}, we have
\begin{equation}
|\wh{\eta}_i-\wt{\eta}_i|\vee|\wh{\eta}_j-\wt{\eta}_j|\leq \delta\Delta,\label{eq:bias-very-small}
\end{equation}
with probability at least $1-O(n^{-7})-\exp(-\Delta^{3/2}Lmp)-\exp\left(-\Delta^2mpL\frac{mp}{\log(n+m)}\right)$,
where $\Delta=\min\left(\eta_i^*-\eta_j^*,\left(\frac{\log(n+m)}{mp}\right)^{1/4}\right)$ and $\delta>0$ is some sufficiently small constant. In fact, the bound (\ref{eq:bias-very-small}) has only been established in \cite{chen2020partial} with a random graph that satisfies $p_{ij}=p$ for all $1\leq i<j\leq m$. To establish (\ref{eq:bias-very-small}) under the more general setting of interest, we first have
\begin{equation}
\lambda_{\min,\perp}(H(\eta^*))=\min_{u\neq 0: \mathds{1}_{m}^Tu=0}\frac{u^TH(\eta^*)u}{\|u\|^2} \gtrsim mp, \label{eq:Hessian-more-general-graph}
\end{equation}
with high probability, where $H(\eta^*)$ is the Hessian matrix of the objective (\ref{eq:MLE-small}). This is established in Lemma \ref{lem:bound-hessian}.
Note that (\ref{eq:Hessian-more-general-graph}) is the only difference between the proofs of the current setting and the setting in \cite{chen2020partial}. With (\ref{eq:bias-very-small}), we have
\begin{eqnarray*}
\mathbb{P}\left(\wh{\eta}_i<\wh{\eta}_j\right) &\leq& \mathbb{P}\left(\wt{\eta}_j-\eta_j^*-(\wt{\eta}_i-\eta_i^*)>(1-\delta)\Delta\right) \\
&& + O(n^{-7})+\exp(-\Delta^{3/2}Lmp)+\exp\left(-\Delta^2mpL\frac{mp}{\log(n+m)}\right).
\end{eqnarray*}
Define
$$\mathcal{B}=\left\{B: \left|\frac{\sum_{l\in[m]\backslash\{j\}}p_{jl}\psi'(\eta_j^*-\eta_l^*)}{\sum_{l\in[m]\backslash\{j\}}B_{jl}\psi'(\eta_j^*-\eta_l^*)}-1\right|\leq\delta,\left|\frac{\sum_{l\in[m]\backslash\{i\}}p_{il}\psi'(\eta_i^*-\eta_l^*)}{\sum_{l\in[m]\backslash\{i\}}B_{il}\psi'(\eta_i^*-\eta_l^*)}-1\right|\leq\delta'\right\}.$$
By Bernstein's inequality, we have $\mathbb{P}(B\in \mathcal{B}^c)\leq O(n^{-7})$ for some $\delta'=o(1)$. We then have
\begin{eqnarray*}
&& \mathbb{P}\left(\wt{\eta}_j-\eta_j^*-(\wt{\eta}_i-\eta_i^*)>(1-\delta)\Delta\right) \\
&\leq& \sup_{B\in\mathcal{B}}\mathbb{P}\left(-\frac{\sum_{l\in[m]\backslash\{j\}}B_{jl}(\bar{y}_{jl}-\psi(\eta_j^*-\eta_l^*))}{\sum_{l\in[m]\backslash\{j\}}B_{jl}\psi'(\eta_j^*-\eta_l^*)}\right. \\
&& \left.+\frac{\sum_{l\in[m]\backslash\{i\}}B_{il}(\bar{y}_{il}-\psi(\eta_i^*-\eta_l^*))}{\sum_{l\in[m]\backslash\{i\}}B_{il}\psi'(\eta_i^*-\eta_l^*)}>(1-\delta)\Delta\Big|B\right) + O(n^{-7}) \\
&\leq& \exp\left(-\frac{(1-2\delta)L(\eta_i^*-\eta_j^*)^2}{2(W_{i}(\eta^*)+W_j(\eta^*))}\right) + O(n^{-7}).
\end{eqnarray*}
Since
$$\exp(-\Delta^{3/2}Lmp)+\exp\left(-\Delta^2mpL\frac{mp}{\log(n+m)}\right)\lesssim \exp\left(-\frac{(1-2\delta)L(\eta_i^*-\eta_j^*)^2}{2(W_{i}(\eta^*)+W_j(\eta^*))}\right) + O(n^{-7}),$$
we obtain the desired conclusion.

\end{proof}

\bibliographystyle{dcu}
\bibliography{reference}

\end{document}